\documentclass[12pt]{scrartcl}

\usepackage{array}
\usepackage{supertabular}
\usepackage{amsmath,amsfonts,amssymb,amsthm} 
\usepackage{bm}
\usepackage{graphicx}
\usepackage{subcaption}

\newcommand{\wai}{\sum_{i=1}^p}

\newcommand{\cald}{\mathcal{D}}
\newcommand{\argmin}{\mathop{\textrm arg~min}\limits}

\newcommand{\xx}{\bm{X}}

\newcommand{\bareu}[1]{{\bar{e}}_{#1}}
\newcommand{\bareo}[1]{{\bar{e}}^{#1}}
\newcommand{\barteta}[1]{{\bar{\theta}}^{#1}}
\newcommand{\aaa}[2]{A_{#1}^{#2}}
\newcommand{\bee}[2]{B_{#1}^{#2}}
\newcommand{\bbar}[2]{\bar{B}_{#1}^{#2}}
\newcommand{\cee}[2]{C_{#1}^{#2}}
\newcommand{\cbar}[2]{\bar{C}_{#1}^{#2}}
\newcommand{\dee}[2]{D_{#1}^{#2}}

\allowdisplaybreaks[4]

\newtheorem{theo}{Theorem}

\newtheorem{coro}{Corollary}

\title {MLE convergence speed to information projection of exponential family: Criterion for model dimension and sample size -- complete proof  version-- }
\author{Yo Sheena\thanks{Faculty of Data Science, Shiga University, Japan; Visiting Professor of the Institute of Statistical Mathematics, Japan}}

\date{May 2021}

\begin{document}
	\maketitle
	
	\begin{abstract}
		For a parametric model of distributions, the closest distribution in the model to the true distribution located outside the model is considered. Measuring the closeness between two distributions with the Kullback-Leibler (K-L) divergence, the closest distribution is called the ``information projection.'' The estimation risk of the maximum likelihood estimator (MLE) is defined as the expectation of K-L divergence between the information projection and the predictive distribution with plugged-in MLE. Here, the asymptotic expansion of the risk is derived up to $n^{-2}$-order, and the sufficient condition on the risk for the Bayes error rate between the true distribution and the information projection to be lower than a specified value is investigated. Combining these results, the ``$p-n$ criterion'' is proposed, which determines whether the MLE is sufficiently close to the information projection for the given model and sample. In particular, the criterion for an exponential family model is relatively simple and can be used for a complex model with no explicit form of normalizing constant. This criterion can constitute a solution to the sample size or model acceptance problem.  Use of the  $p-n$ criteria is demonstrated for two practical datasets. The relationship between the results and information criteria is also studied.
	\end{abstract}
	\noindent
	MSC(2010) \textit{Subject Classification}: Primary 60F99; Secondary 62F12\\
	\textit{Keywords and phrases:} Kullback-Leibler divergence, exponential family, asymptotic risk, information projection, multinomial distribution.
	\section{Introduction}
	\label{section:int}
	\ \ Given a certain data set, an unknown probability distribution that generates the data as the independent, identically distributed (i.i.d.) sample can be assumed. Under this assumption, if a certain parametric distribution model is adopted to ``explain'' the data, the first task is to find the ``best'' distribution in the model. Because the true distribution is assumed to be outside the model (except for some rare cases),  the ``best'' means the ``closest'' to the true distribution.
	
	If the true distribution is successfully approximated by the ``best'' distribution, it has many possible applications. For example, regression or discrimination analysis can be performed based on the conditional distribution of one variable in the distribution (target variable) with respect to other variables (the explanatory variables). The conditional or unconditional distributions can also be used to complete missing values with multiple candidates (multiple imputation). Note that it is possible to decide whether an individual is an outlier based on a contour region of a certain probability. In essence, we can answer any type of question on the true distribution theoretically or, in most cases, numerically using the generated  random variables from the approximating distribution.
	
	The most important merit of the approximating distribution is that it naturally provides ``knowledge of the amount of uncertainty'' in the famous equation of C. R. Rao \cite{Rao}: 
	\begin{quote}
		``Uncertain knowledge'' + ``Knowledge of the amount of uncertainty in it'' = ``Usable knowledge''
	\end{quote}
	For example, for a prediction based on a regression analysis using the conditional distribution of the target variable with respect to the explanatory variables, the target variable value can be predicted and  its prediction interval can also be constituted. The multiple imputation method is preferred to the single approach, as it reflects the likelihood of each imputed value.
	
	During the true distribution approximation process, significant problems arise regarding the methods for the following:
	\begin{enumerate}
		\item systematic construction of a distribution model; 
		\item evaluation of the estimator closeness to the best distribution; 
		\item evaluation of the best distribution closeness to the true distribution.
	\end{enumerate}
	
	This study focuses on the second problem, with the aim of establishing a criterion to determine whether the maximum likelihood estimator (MLE) is sufficiently close to the best distribution. The result for a general distribution model is stated, but the main focus is on an exponential family model, for which a concise criterion is presented. We also address the third problem in relation to the information criteria.
	
\ \ For the finite-dimensional exponential family model, the first problem equates to the basis function selection. Portnoy \cite{Portnoy}, Stone \cite{Stone1}, and Barron and Sheu \cite{Barron&Sheu} have investigated cases involving a series of exponential families on a one-dimensional compact set with splines, polynomials, and trigonometric functions as the basis functions. Those researchers focused on the convergence rate of the predictive distribution to the true distribution as the basis-function dimension increased with the sample size. Further, Wainwright and Jordan \cite{Wainwright&Jordan} extensively studied basis function selection in the context of graphical models. Sundberg \cite{Sundberg} produced a comprehensive book on exponential family models and introduced many model types for various fields. In addition, Efron and Tibshirani \cite{Efron&Tibshirani} studied the hybrid construction of an exponential family using a nonparametric reference function and finite-dimensional basis functions. For recent developments regarding exponential families in association with the reproducing kernel Hilbert space, see \cite{Canu&Smola}, \cite{Fukumizu}, \cite{Sriperumbudur_et_al} and \cite{Arbel&Gretton}. 
	
	However, most of the asymptotic results reported in the above-mentioned papers pertain to investigations of the closeness between the predicted and the true distributions (not the best distribution). These results were related to the consistency or convergence order in accordance with the model inflation, along with the sample size. (Note that Barron and Sheu \cite{Barron&Sheu} studied the convergence of the predictive distribution to the best distribution in a theorem proof; however, their main concern was the distance to the true distribution.) 
	The approach considered in this work is characterized by separation of the second and third problems, focusing on the second problem.
	To address the second problem, we fix the model and derive the asymptotic expansion of the risk with respect to the sample size, $n$ (i.e., the expected distance between the predicted and best distributions). The approximated risk (up to the $n^{-1}$ or $n^{-2}$-order) yields a criterion for the second problem if it is combined  with a certain threshold $C$. 

	\subsection{Framework}
	The framework of this study is as follows. Consider the following parametric distribution model:
	\[
	\mathcal{M} = \{g(x;\theta)\,|\, \theta=(\theta^1,\ldots,\theta^p) \in \Theta\},
	\]
	where $g(x;\theta)$ is the probability density function (p.d.f.) with respect to a reference measure $d\mu$ on a measurable space.  The p.d.f. of the unknown true distribution with respect to $d\mu$ is denoted by $g(x)$.
	
	The Kullback--Leibler divergence (K-L divergence) is used to measure the closeness between two distributions, and the MLE is chosen as the estimator.  This pair, i.e., the K-L divergence and MLE, is the natural choice for the following reasons. 
	
	First, the divergence is a geometrical tool that is independent of the parameter  (i.e., the coordinate system with respect to the differential manifold $\mathcal{M}$). This allows extraction of purely geometrical results; in other words, result dependence on parameter choices can be avoided.  Second, K-L divergence is essentially the only ``decomposable,'' ``flat,'' and ``invariant'' divergence (see Theorem 4.1 of \cite{Amari4}). Invariance is especially important for comparing two distributions, because for a one-to-one transfer of the observed variable, the results should remain unchanged even with the transformed variable (see \cite{Vajda} for other important divergence properties).
	
	K-L divergence in the context of the ``best'' distribution in the model is also explained here. First, consider $\alpha$-divergence, i.e., a class of divergences with one parameter ($\alpha$) defined by
	\begin{equation}
		\label{alphadive}
		\overset{\tiny\alpha}{D}[g(x): g(x;\theta)]=
		\begin{cases}
			\frac{4}{1-\alpha^2}\Bigl\{ 1- \int \bigl(g(x)\bigr)^{(1-\alpha)/2}  \bigl(g(x;\theta)\bigr)^{(1+\alpha)/2} d\mu \Bigr\}, & \text{ if $\alpha \ne \pm 1,$}\\
			\int g(x;\theta) \log \Bigl(g(x;\theta)/g(x) \Bigr)d\mu, & \text{ if  $\alpha=1$,}\\
			\int g(x) \log \Bigl(g(x)/g(x;\theta) \Bigr) d\mu, & \text{ if $\alpha=-1$.}
		\end{cases}
	\end{equation}
	Note that $\alpha$-divergence is the only ``decomposable,'' ``flat,'' and ``invariant'' divergence when it is extended on the positive measure space (see Theorem 4.2 of \cite{Amari4}). Further, $\alpha$-divergence contains frequently used divergences such as the K-L divergence ($\alpha =-1$), Hellinger distance ($\alpha=0$), and $\chi^2$-divergence ($\alpha=3$).  
	
	The ``best'' approximating distribution in $\mathcal{M}$ is the closest distribution $g(x;\theta_*)$ to $g(x)$, where 
	\begin{equation}
		\label{center_model}
		\theta_* = \argmin_{\theta \in \Theta} \overset{\tiny\alpha}{D}[ g(x) \,|\,g(x;\theta)] .
	\end{equation}
	Csisz\'{a}r \cite{Csiszar1} called $g(x;\theta_*)$ the ``information projection.''  If the parametric model is regarded as a device for approximation of the true distribution, $g(x;\theta_*)$ is the best distribution in $\mathcal{M}$ for this purpose. 
	
	In fact, $g(x;\theta_*)$ is essentially given by the solution of the equations
	\begin{align}
		&\frac{\partial\ }{\partial \theta^i} \overset{\tiny \alpha}{D}[ g(x) \,|\,g(x;\theta)] = 0, \quad i=1,\ldots,p \label{solution_center}\\
		&\Longleftrightarrow 
		\begin{cases}
			\int \bigl(g(x;\theta)\bigr)^{\frac{\alpha-1}{2}} \bigl(g(x)\bigr)^{\frac{1-\alpha}{2}}\frac{\partial g(x;\theta)  }{\partial \theta^i} d\mu =0, \quad i=1,\ldots,p, &\text{if $\alpha \ne 1$,}
			\\
			\\ 
			\int \frac{\partial g(x;\theta)  }{\partial \theta^i} \log\bigl(g(x;\theta)/g(x)\bigr) d\mu =0, \quad i=1,\ldots,p, &\text{if $\alpha$=1.}
		\end{cases}
		\label{solution_center_2}
	\end{align}
	Note that, if $\alpha=-1$, equation \eqref{solution_center} is equivalent to
	\begin{equation}
		\label{solution_center_3}
		E \left[ \frac{\partial \ }{\partial \theta^i}\log g(X;\theta) \right] =0,\quad i=1,\ldots,p,
	\end{equation}
	and its solution $\theta^*$ is estimated via the MLE, which is the solution of
	\[
	\sum_{t=1}^n  \frac{\partial \ }{\partial \theta^i}\log g(X_t;\theta) = 0,\quad i=1,\ldots,p
	\]
	where $X_t,\ t=1,\ldots,n$ is the i.i.d. sample from $g(x)$.
	If $\alpha \ne -1$, equation \eqref{solution_center} does not have the form  
	\[
	E[ h(X, \theta) ] = 0
	\]
	with some known function $h(x, \theta)$, which is an M-estimator formulation.
	
	An overview of the contents of each section is now provided. Throughout this paper, the focus is on the ``distance''  between $g(x;\theta_*)$ and the predictive distribution $g(x;\hat\theta)$ with MLE $\hat{\theta}$, i.e.,
	\[
	D[g(x;\theta_*) \,| \, g(x; \hat{\theta})]
	\]
	as well as the ``estimation risk'' 
	\[
	R[g(x;\theta_*) \,| \, g(x; \hat{\theta})] \triangleq
	E\Bigl[D[g(x;\theta_*) \,| \, g(x; \hat{\theta})] \Bigr].
	\]
	It is known that $\hat{\theta}$ converges in probability to $\theta_*$. The estimation risk convergence to zero with increasing $n$ (and a fixed model) is investigated. Further, the asymptotic expansion of the risk with respect to $n$ (Section \ref{Risk}) is derived for both a general model (Section \ref{Risk_g}) and an exponential family model (Section \ref{Risk_e}).

	In Section \ref{Criteria}, the criterion to determine whether the MLE is sufficiently close to the information projection of the model is considered; namely, the ``$p-n$ criterion.'' In other words, this criterion indicates whether $n$ is sufficiently large for the given model, or if the model dimension is sufficiently small for the given $n$. In Section \ref{p-n_criteria}, estimation of the unknown elements that appear in the asymptotic expansion of the estimation risk in Section \ref{Risk} is shown. A method for setting a $C$ for the estimation risk in relation to the Bayes error rate is also proposed. In Section \ref{est_pro}, the algorithm for calculating the $p-n$ criterion in the case of an exponential family is described. In Section \ref{example_sec}, use of the $p-n$ criterion is demonstrated for two practical examples. 
	
	Finally, Section \ref{aic_tic} treats the ``total risk'' 
	\[
	E[D[g(x) \,|\,g(x;\hat{\theta}) ]];
	\]
	that is, the expected distance between $g(x)$ and $g(x;\hat{\theta})$.  By decomposing the total risk into the estimation risk and  ``approximation risk,''
	\[
	D[g(x)\, |\, g(x;\theta_*)],
	\]
	the relationships between the obtained result and Takeuchi's information criteria (TIC) and Akaike's information criteria (AIC) are explained.
	
	Throughout this study, the expectation over $\xx$ under $g(x)$ is denoted by $E[\cdot]$, while the expectation under $g(x;\theta_*)$ is denoted by $E_{\theta_*}[\cdot]$.
	\section{Estimation Risk for General Case and Exponential Family}
	\label{Risk}
	\ \ The convergence speed of $g(x;\hat\theta)$ to $g(x;\theta_*)$ is considered. The most relevant work to the present result is that of Barron and Sheu \cite{Barron&Sheu}, which considers the convergence with respect to the K-L divergence for an exponential family on a compact set. In particular, those researchers considered an exponential family with polynomials, splines, and trigonometric functions as the basis functions, and obtained the convergence order of the divergence itself (rather than the risk) as $p$  and $n$ increased simultaneously under the condition $p^2/n \to 0$. (For K-L divergence and nonparametric density estimation in general, see \cite{Hall}.)
	
	In this section, the concrete terms of the estimation-risk asymptotic expansion are derived as preparation for the criterion based on the relationship between $p$ and $n$, which is discussed in the next section.
	\subsection{Estimation Risk for General Case}
	\label{Risk_g}
	\ \ The asymptotic expansion of 
	\begin{equation}
		R[g(x;\theta_*)\,|\,g(x;\hat{\theta}) ] = E[D[g(x;\theta_*)\,|\,g(x;\hat{\theta}) ] 
	\end{equation}
	is derived up to the second-order term with respect to $n$. 
	
	Taylor expansion of  
	\[
	D[g(x;\theta_*)\,|\,g(x;\hat{\theta}) ]
	= \int g(x;\theta_*) \log (g(x;\theta_*) /g(x;\hat{\theta}) ) d\mu 
	\]
	as a function of $\hat{\theta}$ around $\theta_*$ is considered:
	\begin{align}
		&D[g(x;\theta_*)\,|\,g(x;\hat{\theta}) ] \nonumber\\
		&= -\sum_i \int  \frac{\partial \ }{\partial \theta^i} g(x;\theta)\bigg|_{\theta=\theta_*} d\mu\  (\hat{\theta}^i - \theta_*^i) \nonumber\\
		&\quad + \frac{1}{2} \sum_{i,j} \int g(x;\theta_*)\Bigl( \frac{\partial \ }{\partial \theta^i}\log g(x;\theta) \bigg|_{\theta=\theta_*}\Bigr) 
		\Bigl( \frac{\partial \ }{\partial \theta^j} \log g(x;\theta) \bigg|_{\theta=\theta_*}\Bigr) d\mu \nonumber\\
		&\qquad \times  (\hat{\theta}^i - \theta_*^i) (\hat{\theta}^j - \theta_*^j) \label{talyor_D}\\
		&\quad - \frac{1}{2}\sum_{i,j} \int \frac{\partial^2 \ \ \  }{\partial \theta^i \partial \theta^j} g(x, \theta)\bigg|_{\theta=\theta_*} d\mu \  (\hat{\theta}^i - \theta_*^i) (\hat{\theta}^j - \theta_*^j)\nonumber\\
		&\quad - \sum_{t=3}^{\infty} \frac{1}{t!}\sum_{i_1,\ldots,i_t}\int g(x;\theta_*) \frac{\partial\hfill}{\partial \theta^{i_1}\cdots \partial \theta^{i_t}} \log g(x, \theta) \bigg |_{\theta=\theta_*} d\mu \nonumber\\
		&\qquad \times (\hat{\theta}^{i_1} - \theta_*^{i_1})\cdots (\hat{\theta}^{i_t} - \theta_*^{i_t}).
		\nonumber
	\end{align}
	The equation
	\[
	\int g(x, \theta) d\mu = 1,\quad \forall \theta \in \Theta,
	\]
	yields
	\begin{equation}
		\label{deri_0}
		\int \frac{\partial \ }{\partial \theta^i} g(x;\theta) d\mu =0,\qquad \int \frac{\partial^2 \ \ }{\partial \theta^i \partial \theta^j} g(x, \theta) d\mu=0,\qquad \forall \theta \in \Theta.
	\end{equation}
	Therefore, 
	\begin{align}
		R[g(x;\theta_*)\,|\,g(x;\hat{\theta}) ] 
		&= \frac{1}{2}\sum_{i,j} g^*_{ij}(\theta_*)  E[(\hat{\theta}^i - \theta_*^i) (\hat{\theta}^j - \theta_*^j)] \nonumber\\
		&\quad - \sum_{t=3}^{\infty} \frac{1}{t!}\sum_{i_1,\ldots,i_t} \tau_{i_1,\ldots,i_t}E[(\hat{\theta}^{i_1} - \theta_*^{i_1})\cdots (\hat{\theta}^{i_t} - \theta_*^{i_t})].
		\label{talyor_D_2}
	\end{align}
	Here, 
	\[
	\tau_{i_1,\ldots,i_t} \triangleq \int g(x;\theta_*) \frac{\partial\hfill}{\partial \theta^{i_1}\cdots \partial \theta^{i_t}} \log g(x, \theta) \bigg |_{\theta=\theta_*} d\mu
	\]
	and $g^*_{ij}$ indicates the components of the Fisher metric matrix on $\mathcal{M}$, given by
	\begin{equation}
		\label{def_g*}
		\begin{split}
			g^*_{ij}(\theta_*)\triangleq (G^*(\theta_*))_{ij} 
			& \triangleq E_{\theta_*}\Bigl[\Bigl( \frac{\partial \ }{\partial \theta^i}\log g(x;\theta) \Big|_{\theta=\theta_*}\Bigr) 
			\Bigl( \frac{\partial \ }{\partial \theta^j} \log g(x;\theta)\Big|_{\theta=\theta_*} \Bigr) \Bigr]\\
			& = \int g(x;\theta_*)\Bigl( \frac{\partial \ }{\partial \theta^i}\log g(x;\theta) \Big|_{\theta=\theta_*}\Bigr) 
			\Bigl( \frac{\partial \ }{\partial \theta^j} \log g(x;\theta)\Big|_{\theta=\theta_*} \Bigr) d\mu\\
			&=- \int g(x;\theta_*) \frac{\partial^2 \hfill }{\partial \theta^i \partial \theta^j}\log g(x;\theta) \Big|_{\theta=\theta_*}
			d\mu.
		\end{split}
	\end{equation}
	The second equation is derived from \eqref{deri_0}.
	
	As $\theta_*$ is the solution of equation \eqref{solution_center_3} and $\hat{\theta}$ is its empirical solution (i.e., the M-estimator), the following result holds (see, e.g., Theorem 5.21 of \cite{Vaart}). 
	\begin{equation}
		\label{asymp_normal}
		\sqrt{n}\bigl(\hat{\theta}-\theta_*\bigr) \stackrel{d}{\to} N_p (0, \tilde{G}^{-1}G\tilde{G}^{-1}),
	\end{equation}
	where 
	\begin{align}
		\label{def_g}
		g_{ij}(\theta_*) \triangleq (G(\theta_*))_{ij} & \triangleq E\Bigl[\Bigl( \frac{\partial \ }{\partial \theta^i}\log g(X;\theta)\Big|_{\theta=\theta_*} \frac{\partial \ }{\partial \theta^j}\log g(X;\theta)\Big|_{\theta=\theta_*}\Bigr)\Bigr], \\
		\label{def_tilde_g}
		\tilde{g}_{ij}(\theta_*) \triangleq (\tilde{G}(\theta_*))_{ij} &\triangleq -E\Bigl[\frac{\partial^2 \ \quad} {\partial \theta^j \partial \theta^i} \log g(X;\theta)\Big|_{\theta=\theta_*}\Bigr] .
	\end{align}
	%
	%
	%
	%
	%
	The following notation is defined, for $1\leq i, j, k, l\leq p$:
	\begin{align*}
		&L_{(ijk)}\triangleq E\Bigl[\frac{\partial^3 \log g(x;\theta_*)}{\partial \theta^i \partial \theta^j \partial \theta^k} \Bigr] \\
		&L_{(ij)k}\triangleq E\Bigl[\frac{\partial^2 \log g(x;\theta_*)}{\partial \theta^i \partial \theta^j} \frac{\partial \log g(x;\theta_*)}{\partial \theta^k}\Bigr] \\
		& L_{ijk}\triangleq E\Bigl[\frac{\partial \log g(x;\theta_*)}{\partial \theta^i} \frac{\partial \log g(x;\theta_*)}{\partial \theta^j}\frac{\partial \log g(x;\theta_*)}{\partial \theta^k}\Bigr] \\
		&L_{(ijkl)}\triangleq E\Bigl[\frac{\partial^4 \log g(x;\theta_*)}{\partial \theta^i \partial \theta^j \partial \theta^k \partial \theta^l} \Bigr] \\
		&L_{(ij)(kl)}\triangleq E\Bigl[\frac{\partial^2 \log g(x;\theta_*)}{\partial \theta^i \partial \theta^j} \frac{\partial^2 \log g(x;\theta_*)}{\partial \theta^k \partial \theta^l} \Bigr] \\
		&L_{(ijk)l}\triangleq E\Bigl[\frac{\partial^3 \log g(x;\theta_*)}{\partial \theta^i \partial \theta^j \partial \theta^k} \frac{\partial \log g(x;\theta_*)}{\partial \theta^l}\Bigr] \\
		&L_{(ij)kl} \triangleq E\Bigl[\frac{\partial^2 \log g(x;\theta_*)}{\partial \theta^i \partial \theta^j}  \frac{\partial \log g(x;\theta_*)}{\partial \theta^k}\frac{\partial \log g(x;\theta_*)}{\partial \theta^l}\Bigr].
	\end{align*}
	The next theorem states the estimation-risk asymptotic expansion up to the term $n^{-2}$ for a general distribution model. For brevity, Einstein notation is used and the dependency on $\theta_*$ is omitted; e.g., $G$ for $G(\theta_*)$ and $\tilde{g}^{ij}$ for $\tilde{g}^{ij}(\theta_*)$.
	\begin{theo}
		\label{theo_general_risk}
		The MLE estimation risk with respect to K-L divergence is given by
		\begin{equation}
			\label{expan_est_risk}
			\begin{split}
				&R[g(x;\theta_*)\,|\,g(x;\hat{\theta}) ] \\
				&= (2n)^{-1} \mathrm{tr} \Bigl(\tilde{G}^{-1}G\tilde{G}^{-1}G^* \Bigr) \\
				&\quad+n^{-2} \Bigl[2^{-1}g^*_{ij}\Bigl(
				\tilde{g}^{sj}\tilde{g}^{it}\tilde{g}^{lm}(L_{(sl)tm}+\tilde{g}_{ls}g_{tm})
				+\tilde{g}^{si}\tilde{g}^{jt}\tilde{g}^{lm}(L_{(sl)tm}+\tilde{g}_{ls}g_{tm})\\
				&\qquad\qquad+ 2^{-1}\tilde{g}^{uj}\tilde{g}^{ik}\tilde{g}^{ls}\tilde{g}^{mt}L_{kst}L_{(lmu)}
				+2^{-1}\tilde{g}^{ui}\tilde{g}^{jk}\tilde{g}^{ls}\tilde{g}^{mt}L_{kst}L_{(lmu)}\\
				&\qquad\qquad+\tilde{g}^{jk}\tilde{g}^{lu}\tilde{g}^{is}\tilde{g}^{mt} 
				(L_{(kl)(um)}g_{st}-\tilde{g}_{kl}\tilde{g}_{um}g_{st}+L_{(kl)s}L_{(um)t}+L_{(kl)t}L_{(um)s})\\
				&\qquad\qquad+\tilde{g}^{ik}\tilde{g}^{lu}\tilde{g}^{js}\tilde{g}^{mt} 
				(L_{(kl)(um)}g_{st}-\tilde{g}_{kl}\tilde{g}_{um}g_{st}+L_{(kl)s}L_{(um)t}+L_{(kl)t}L_{(um)s})\\
				&\qquad\qquad+2^{-1}\tilde{g}^{jk}\tilde{g}^{it}\tilde{g}^{mu}\tilde{g}^{sv}\tilde{g}^{wl}L_{(msw)}(L_{(lk)t}g_{uv}+L_{(lk)u}g_{tv}+L_{(lk)v}g_{tu})\\
				&\qquad\qquad+2^{-1}\tilde{g}^{ik}\tilde{g}^{jt}\tilde{g}^{mu}\tilde{g}^{sv}\tilde{g}^{wl}L_{(msw)}(L_{(lk)t}g_{uv}+L_{(lk)u}g_{tv}+L_{(lk)v}g_{tu})\\
				&\qquad\qquad+2^{-1}\tilde{g}^{js}\tilde{g}^{it}\tilde{g}^{lu}\tilde{g}^{mv}
				(L_{(slm)t}g_{uv}+L_{(slm)u}g_{tv}+L_{(slm)v}g_{tu})\\
				&\qquad\qquad+2^{-1}\tilde{g}^{is}\tilde{g}^{jt}\tilde{g}^{lu}\tilde{g}^{mv}
				(L_{(slm)t}g_{uv}+L_{(slm)u}g_{tv}+L_{(slm)v}g_{tu})\\
				&\qquad\qquad+\tilde{g}^{mt}\tilde{g}^{iu}\tilde{g}^{lv}\tilde{g}^{sw}\tilde{g}^{kj}L_{(lmk)}(L_{(ts)u}g_{vw}
				+L_{(ts)v}g_{uw}+L_{(ts)w}g_{uv})\\
				&\qquad\qquad+\tilde{g}^{mt}\tilde{g}^{ju}\tilde{g}^{lv}\tilde{g}^{sw}\tilde{g}^{ki}L_{(lmk)}(L_{(ts)u}g_{vw}
				+L_{(ts)v}g_{uw}+L_{(ts)w}g_{uv})\\
				&\qquad\qquad+2^{-1}\tilde{g}^{ik}\tilde{g}^{lu}\tilde{g}^{sv}\tilde{g}^{tw}\tilde{g}^{oj}\tilde{g}^{hm}L_{(lmo)}L_{(sth)}
				(g_{ku}g_{vw}+g_{kv}g_{uw}+g_{kw}g_{uv})\\
				&\qquad\qquad+2^{-1}\tilde{g}^{jk}\tilde{g}^{lu}\tilde{g}^{sv}\tilde{g}^{tw}\tilde{g}^{oi}\tilde{g}^{hm}L_{(lmo)}L_{(sth)}
				(g_{ku}g_{vw}+g_{kv}g_{uw}+g_{kw}g_{uv})\\
				&\qquad\qquad+6^{-1}\tilde{g}^{ik}\tilde{g}^{ls}\tilde{g}^{mu}\tilde{g}^{tv}\tilde{g}^{wj}L_{(lmtw)}
				(g_{ks}g_{uv}+g_{ku}g_{sv}+g_{kv}g_{su})\\
				&\qquad\qquad+6^{-1}\tilde{g}^{jk}\tilde{g}^{ls}\tilde{g}^{mu}\tilde{g}^{tv}\tilde{g}^{wi}L_{(lmtw)}
				(g_{ks}g_{uv}+g_{ku}g_{sv}+g_{kv}g_{su})\\
				&\qquad\qquad+\tilde{g}^{ik}\tilde{g}^{js}\tilde{g}^{lt}\tilde{g}^{mu}
				(L_{(kl)(sm)}g_{tu}-\tilde{g}_{kl}\tilde{g}_{sm}g_{tu}+L_{(kl)t}L_{(sm)u}+L_{(kl)u}L_{(sm)t})\\
				&\qquad\qquad+2^{-1}\tilde{g}^{ik}\tilde{g}^{lu}\tilde{g}^{sv}\tilde{g}^{tw}\tilde{g}^{jm}L_{(stm)}
				(L_{(kl)u}g_{vw}+L_{(kl)v}g_{uw}+L_{(kl)w}g_{uv})\\
				&\qquad\qquad+2^{-1}\tilde{g}^{jk}\tilde{g}^{lu}\tilde{g}^{sv}\tilde{g}^{tw}\tilde{g}^{im}L_{(stm)}
				(L_{(kl)u}g_{vw}+L_{(kl)v}g_{uw}+L_{(kl)w}g_{uv})\\
				&\qquad\qquad+4^{-1}\tilde{g}^{lk}\tilde{g}^{mu}\tilde{g}^{sv}\tilde{g}^{tw}\tilde{g}^{io}\tilde{g}^{jh}L_{(lmo)}L_{(sth)}
				(g_{ku}g_{vw}+g_{kv}g_{uw}+g_{kw}g_{uv})
				\Bigr)\\
				&\qquad\qquad-6^{-1}\tau_{ijk}\Bigl(\tilde{g}^{is}\tilde{g}^{jt}\tilde{g}^{ku}L_{stu} \\
				&\qquad\qquad+\tilde{g}^{it}\tilde{g}^{su}\tilde{g}^{jv}\tilde{g}^{kw}(L_{(st)u}g_{vw}+L_{(st)v}g_{uw}+L_{(st)w}g_{uv})\\
				&\qquad\qquad+\tilde{g}^{jt}\tilde{g}^{su}\tilde{g}^{iv}\tilde{g}^{kw}(L_{(st)u}g_{vw}+L_{(st)v}g_{uw}+L_{(st)w}g_{uv})\\
				&\qquad\qquad+\tilde{g}^{kt}\tilde{g}^{su}\tilde{g}^{iv}\tilde{g}^{jw}(L_{(st)u}g_{vw}+L_{(st)v}g_{uw}+L_{(st)w}g_{uv})\\
				&\qquad\qquad+2^{-1} \tilde{g}^{su}\tilde{g}^{tv}\tilde{g}^{jw}\tilde{g}^{km}\tilde{g}^{il}L_{(stl)}(g_{uv}g_{wm}+g_{uw}g_{vm}+g_{um}g_{vw})\\
				&\qquad\qquad+2^{-1} \tilde{g}^{su}\tilde{g}^{tv}\tilde{g}^{iw}\tilde{g}^{km}\tilde{g}^{jl}L_{(stl)}(g_{uv}g_{wm}+g_{uw}g_{vm}+g_{um}g_{vw})\\
				&\qquad\qquad+2^{-1}\tilde{g}^{su}\tilde{g}^{tv}\tilde{g}^{iw}\tilde{g}^{jm}\tilde{g}^{kl}L_{(stl)}(g_{uv}g_{wm}+g_{uw}g_{vm}+g_{um}g_{vw})\Bigr)\\
				&\qquad\qquad-24^{-1}\tau_{ijkl}\tilde{g}^{is}\tilde{g}^{jt}\tilde{g}^{ku}\tilde{g}^{lv}(g_{st}g_{uv}+g_{su}g_{tv}+g_{sv}g_{tu})
				\Bigr]\\
				&\quad + O(n^{-3}).
			\end{split}
		\end{equation}
	\end{theo}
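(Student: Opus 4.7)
The strategy is to evaluate each term in the Taylor expansion \eqref{talyor_D_2} of $R[\cdot]$ at the required order and then collect terms. Write $\Delta^i \triangleq \hat\theta^i-\theta_*^i$. Since $\Delta = O_p(n^{-1/2})$ and the $t$-th term of \eqref{talyor_D_2} is $t$-linear in $\Delta$, only $t=2,3,4$ feed the orders $n^{-1}$ and $n^{-2}$: the $t\ge 5$ terms are $O(n^{-3})$ because leading Gaussian moments of odd order vanish and by standard $L^1$ moment bounds for regular MLEs. So the task reduces to computing $E[\Delta^i\Delta^j]$ up to $O(n^{-2})$, and $E[\Delta^i\Delta^j\Delta^k]$, $E[\Delta^i\Delta^j\Delta^k\Delta^l]$ up to $O(n^{-2})$.

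To obtain these moments I derive a stochastic expansion of $\Delta$. Taylor-expand the MLE equation $0=n^{-1}\sum_t\partial_i\log g(X_t;\hat\theta)$ around $\theta_*$, and split each empirical $r$-th derivative average into its population value at $\theta_*$ (namely $0,-\tilde g_{ij},L_{(ijk)},L_{(ijkl)}$ for $r=1,\dots,4$) plus a centered average. Writing $Z_i,U_{ij},V_{ijk}$ for those centered averages scaled by $\sqrt n$, iterative inversion of the Taylor-expanded equation yields
\[
\Delta^a = n^{-1/2}\delta_1^a + n^{-1}\delta_2^a + n^{-3/2}\delta_3^a + O_p(n^{-2}),
\]
with $\delta_1^a=\tilde g^{ai}Z_i$ and $\delta_2,\delta_3$ explicit polynomials in $Z,U,V$ whose coefficients mix $\tilde g^{-1}$ with $L_{(ijk)}$ and $L_{(ijkl)}$. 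Expectations then follow from i.i.d.\ cumulant scaling of $Z,U,V$: the second joint moments are $E[Z_iZ_j]=g_{ij}$, $E[U_{ij}Z_k]=L_{(ij)k}$, $E[U_{ij}U_{kl}]=L_{(ij)(kl)}-\tilde g_{ij}\tilde g_{kl}$; the third joint moments scale as $n^{-1/2}$, with $E[Z_iZ_jZ_k]=n^{-1/2}L_{ijk}$ and $E[U_{ij}Z_kZ_l]=n^{-1/2}(L_{(ij)kl}+\tilde g_{ij}g_{kl})$. These are exactly the tensor combinations visible in \eqref{expan_est_risk}; fourth and higher joint cumulants are $O(n^{-1})$ or smaller and fall into the $O(n^{-3})$ remainder.

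Assembling, $E[\Delta^i\Delta^j]$ equals $n^{-1}(\tilde G^{-1}G\tilde G^{-1})^{ij}$ plus an $n^{-2}$ correction built from $E[\delta_2\delta_2]+E[\delta_1\delta_3+\delta_3\delta_1]$ together with the $n^{-1/2}$-coefficient of $E[\delta_1\delta_2+\delta_2\delta_1]$; $E[\Delta^i\Delta^j\Delta^k]$ reduces at $O(n^{-2})$ to the symmetrized $E[\delta_1\delta_1\delta_2]$; $E[\Delta^i\Delta^j\Delta^k\Delta^l]$ reduces at $O(n^{-2})$ to the three Wick pairings of $E[\delta_1^{\otimes 4}]$, which generate the $g_{ab}g_{cd}+g_{ac}g_{bd}+g_{ad}g_{bc}$ patterns pervading the statement. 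Substituting into \eqref{talyor_D_2}, the leading term $(2n)^{-1}\mathrm{tr}(\tilde G^{-1}G\tilde G^{-1}G^*)$ arises from $\tfrac12 g^*_{ij}E[\delta_1^i\delta_1^j]/n$, and each $n^{-2}$ monomial in \eqref{expan_est_risk} matches one specific Wick pairing contracted against $g^*_{ij}/2$, $-\tau_{ijk}/6$, or $-\tau_{ijkl}/24$.

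The hard part is combinatorial bookkeeping rather than any single deep argument: each of the many monomials in \eqref{expan_est_risk} must be matched to a specific pairing, respecting the symmetrizations imposed by the outer contraction (this is the origin of the duplicated pairs such as $\tilde g^{sj}\tilde g^{it}$ versus $\tilde g^{si}\tilde g^{jt}$, corresponding to $E[\delta_1^i\delta_3^j]$ versus $E[\delta_3^i\delta_1^j]$ contributing separately to $E[\Delta^i\Delta^j]$). A secondary technical point is justifying that the truncation errors, namely the $O_p(n^{-2})$ piece of the stochastic expansion and the tail $\sum_{t\ge 5}$ of \eqref{talyor_D}, translate into $O(n^{-3})$ integrated contributions; this is routine under standard regularity assumptions (local concavity of $\log g(x;\theta)$ near $\theta_*$ and sufficiently many finite moments of its derivatives).
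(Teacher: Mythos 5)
Your proposal follows essentially the same route as the paper's proof: Taylor-expand and invert the score equation to obtain a stochastic expansion of $\hat\theta-\theta_*$ in the centered empirical derivative averages (your $Z$, $U$, $V$ correspond, up to normalization, to the paper's $\bareu{i}$, $\aaa{j}{s}$, $\bee{jk}{s}$, with the deterministic parts $\bbar{jk}{s}$, $\cbar{jkl}{s}$ carrying $L_{(ijk)}$, $L_{(ijkl)}$), compute the joint moments by i.i.d.\ factorization and Wick-type pairings, and substitute into \eqref{talyor_D_2}; the moment identities you list are exactly the ones the paper establishes term by term. One bookkeeping slip to fix when executing the plan: at order $n^{-2}$ the third moment $E[\Delta^i\Delta^j\Delta^k]$ receives a contribution not only from the symmetrized $E[\delta_1\delta_1\delta_2]$ but also from $E[\delta_1^i\delta_1^j\delta_1^k]$, which is $n^{-3/2}$ times the $O(n^{-1/2})$ third moment of the normalized score and is precisely the source of the $\tilde{g}^{is}\tilde{g}^{jt}\tilde{g}^{ku}L_{stu}$ term contracted with $-6^{-1}\tau_{ijk}$ in \eqref{expan_est_risk} --- you correctly list the needed moment $E[Z_iZ_jZ_k]=n^{-1/2}L_{ijk}$ but then omit this pairing from your summary of which pieces contribute.
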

	\begin{proof}
		This proof is omitted as it is lengthy; however, it is given in Section \ref{Proof_Theo} of the Appendix.
	\end{proof}
	As shown by Efron \cite{Efron} and Amari \cite{Amari0}, the first- and second-order terms ($n^{-1}$- and $n^{-2}$-order terms, respectively) are respectively related to the metrics and connections in the extended manifold that includes both $g(x)$ and $\mathcal{M}$. This issue is not investigated here. Note only that, if $g(x)$ exists within the model, then $G=\tilde{G}=G^*$; hence, the first-order term equals $p/(2n)$ (see also \cite{Sheena2}). Thus, the first-order term is mainly determined by $p$ if $g(x;\theta_*)$ is close to $g(x)$.
	
	The second-order term is very complex, and the calculation of a specific distribution model requires extensive computational resources. Because it is difficult to use the term for practical purposes, the first-order term is the focus of the remainder of this section; it is studied using some examples.
	%
	%
	%
	%
	%
	%
	
	First, a normal regression model is taken as an example and the first-order term of Theorem \ref{theo_general_risk} is applied. \\
	\\
	-- \textit{Example 1: Normal regression model } -- 
	
	Let $h(x)$ be the true p.d.f. of the explanatory variables $X \triangleq(X_1,\ldots,X_p) $ with respect to the Lebesgue measure. Consider the following normal regression model:
	\[
	Y = \sum_{i=1}^p \theta^i X_i + \epsilon, \qquad \epsilon \sim N(0, (\theta^0)^{-1}),
	\]
	where $X$ and $\epsilon$ are independently distributed. 
	The parametric distribution model of  $(Y,X)$ is given by
	\[
	\mathcal{M} =   \Bigl \{ g(y,x; \theta) \Big| \, \theta^0 > 0, \ -\infty < \theta^i  < \infty,\ i=1,\ldots,p \Bigr\},
	\]
	where
	\[
	g(y,x ;\theta) = \exp\Bigl(-\frac{\theta^0}{2}\bigl(y- \sum_{i=1}^p \theta^i x_i \bigr)^2 -\frac{1}{2}\log(2\pi)+\frac{1}{2}\log \theta^0 \Bigr),
	\]
	and $d\mu = h(x) dx.$
	
	Under the true distribution for $(Y,X)$, consider the distribution of the random variable
	\[
	\epsilon(Y,X; \theta_*) \triangleq Y - \sum_{i=1}^p \theta_*^i X_i.
	\]
	As the true distribution of $(Y,X)$ is outside $\mathcal{M}$, $\epsilon(Y,X; \theta_*)$ is differently distributed from the normal regression model assumption. That is, the assumption that $\epsilon(Y,X:\theta_*)$ is independent of $X$ and is normally distributed is invalid. Let us consider the following three non-normal cases for the distribution of $\epsilon(Y, X;\theta_*)$. The higher the case, the greater the discrepancy between the true distribution and the regression model assumption. \\
	Case 1: $\epsilon(Y,X;\theta_*)$ is correlated with $X$.\\
	Case 2: $\epsilon(Y,X;\theta_*)$ is independently distributed from $X$ but has different moments from those of $N(0,(\theta^0_*)^{-1})$. \\
	Case 3: $\epsilon(Y,X;\theta_*)$ is independently distributed from $X$ and has the same kurtosis as $N(0,(\theta^0_*)^{-1})$; i.e.,
	\[
	\frac{E[\epsilon^4(Y,X;\theta_*)]}{E^2[\epsilon^2(Y,X;\theta_*)]}=3.
	\]
	\\
	For each case, the asymptotic risk is determined as follows (for the derivation, see \ref{nomral_risk_deriv} of the Appendix).
	For Case 1,
	\begin{equation}
		\label{Risk_norm_case1}
		\begin{split}
			&R[g(x;\theta_*)\,|\,g(x;\hat{\theta}) ] \\
			&=\frac{1}{2n} \Bigl(  \mathrm{tr} \Bigl(S^{-1} T\Bigr) /E[\epsilon^2(Y,X;\theta_*)]
			+\frac{1}{2}\Bigl(E[\epsilon^4(Y,X;\theta_*)]/E^2[\epsilon^2(Y,X;\theta_*)]-1\Bigr)\Bigr)+ o(n^{-1}),
		\end{split}
	\end{equation}
	where $(S)_{ij} \triangleq E[X_i X_j],\ (T)_{ij} \triangleq E[X_i X_j \epsilon^2(Y,X;\theta_*)],\quad i,j=1,\ldots,p$.  \\
	For Case 2,
	\begin{equation}
		\label{Risk_norm_case2}
		R[g(x;\theta_*)\,|\,g(x;\hat{\theta}) ]= \frac{1}{2n} \Bigl(p +\frac{1}{2}\Bigl(E[\epsilon^4(Y,X;\theta_*)]/E^2[\epsilon^2(Y,X;\theta_*)]-1\Bigr)\Bigr)+ o(n^{-1}).
	\end{equation}
	For Case 3, 
	\begin{equation}
		\label{Risk_norm_case3}
		R[g(x;\theta_*)\,|\,g(x;\hat{\theta}) ]= \frac{p+1}{2n} + o(n^{-1}).
	\end{equation}
	Hence,
	\begin{itemize}
		\item If the kurtosis of $\epsilon(Y,X;\theta_*)$ is less than 3 (``platykurtic'') in \eqref{Risk_norm_case2}, the first-order term is clearly less than $(p+1)/(2n)$. This means that the risk converges faster than the case in which the model contains the true distribution.
	\end{itemize}
Note that the small estimation risk does not guarantee the small total risk. In this example, if the kurtosis of $\epsilon$ under $g(x)$ is smaller than that under $g(x;\theta_*)$, it is an indication of the discrepancy between the two distributions and may produce the large approximation risk (see Section \cite{aic_tic}).
	%
	%
	%
	%
	%
	%
	
	Next, the Poisson regression model is considered as another example.
	\\
	\\
	-- \textit{Example 2: Poisson Regression Model } --
	
	Let $h(x)$ be the true p.d.f. of the explanatory variables $X \triangleq(X_1,\ldots,X_p) $ with respect to the Lebesgue measure. Suppose that, when $X$ is given as $x=(x_1,\ldots,x_p)$, $Y$ is distributed as the Poisson distribution with mean 
	\[
	\lambda(x;\theta) = \exp\Bigl(\sum_{i=1}^p \theta^i x_i\Bigr).
	\]
	Then, the distribution model of $(Y,X)$ is given as the p.d.f. of the form
	\[
	g(y,x | \theta) =\exp\Bigl( \sum_{i=1}^p \theta^i x_i y - \lambda\Bigr) = \lambda^y \exp(-\lambda)
	\]
	where the reference measure $d\mu$ is the product measure between the discrete measure $1/y!$ on $\{ y \,| \, y=0,1,2,\ldots \}$ and  the continuous measure $h(x) dx$ on $\Re^p$. 
	
	As for the true distribution, we postulate that the conditional distribution of $Y$ under $X=x$ is the Poisson distribution with mean $\lambda_0(x) = \exp(\xi(x))$. This is different from the model in that the conditional log mean of $Y$ is nonlinear. 
	
	As
	\[
	\frac{\partial \hfill}{\partial \theta^i} \log g(y,x|\theta) =x_i y - \lambda(x;\theta) x_i,
	\quad \frac{\partial^2 \hfill}{\partial \theta^i \partial \theta^j} \log g(y,x|\theta) = -\lambda(x;\theta) x_i x_j,
	\]
	for $i, j=1\ldots,p$,
	\begin{align*}
		\tilde{g}_{ij}(\theta_*) &= E[X_i X_j \lambda(X;\theta_*)] = g^*_{ij}(\theta_*),\\
		g_{ij}(\theta_*) &= E[X_i X_j (Y - \lambda(X;\theta_*))^2]\\
		&= E[X_i X_j Y^2] - 2E[X_i X_j Y \lambda(X;\theta_*)] + E[X_i X_j \lambda^2(X;\theta_*)] \\
		&= E[X_i X_j (\lambda_0(X) + \lambda_0^2(X))] - 2E[X_i X_j \lambda_0(X) \lambda(X;\theta_*)] +E[X_i X_j \lambda^2(X;\theta_*)]\\
		&=  E[X_i X_j (\lambda_0(X) + (\lambda(X;\theta_*)-\lambda_0(X))^2)].
	\end{align*}
	Consequently, $\tilde{G}(\theta_*)=G^*(\theta_*)$ and 
	\begin{align*}
		\mathrm{tr} \Bigl(\tilde{G}(\theta_*)^{-1}G(\theta_*)\tilde{G}(\theta_*)^{-1}G^*(\theta_*) \Bigr)
		&=\mathrm{tr} \Bigl(\tilde{G}(\theta_*)^{-1}G(\theta_*) \Bigr)\\
		&=p + \mathrm{tr} \Bigl(\tilde{G}(\theta_*)^{-1}\Bigl(G(\theta_*)- \tilde{G}(\theta_*)\Bigr)\Bigr)
	\end{align*}
	and 
	\[
	\Bigl(G(\theta_*)- \tilde{G}(\theta_*)\Bigr)_{ij} = E\Bigl[X_i X_j \Bigl(\bigl(\lambda(X;\theta_*)-\lambda_0(X)\bigr)\bigl(\lambda(X;\theta_*)-\lambda_0(X)-1\bigr)\Bigr)\Bigr].
	\]
	Hence,
	\begin{itemize}
		\item If $0 < \lambda(X;\theta_*)-\lambda_0(X) < 1$ almost everywhere, the estimation risk converges faster than the case when the model includes the true distribution.
	\end{itemize}
	\subsection{Estimation Risk for Exponential Family}
	\label{Risk_e}
	\ \ This subsection investigates the estimation risk when the parametric model is an exponential family (for general references on exponential families, see \cite{Brown} and \cite{Barndorff-Nielsen}).
	
	Let the model $\mathcal{M}$ be given by
	\begin{equation}
		\label{manifold_exp}
		\mathcal{M} =  \Bigl\{g(x ;\theta) = \exp\Bigl(\sum_{i=1}^p \theta^i \xi_i(x) -\Psi(\theta)\Bigr) \ \Big | \theta\in \Theta\Bigr\}.
	\end{equation}
	where $\Psi(\theta)$ is the cumulant-generating function of the $\xi$ terms, such that, 
	\[
	\Psi(\theta) = \log \int \exp\Bigl(\sum_{i=1}^p \theta^i \xi_i(x)\Bigr) d\mu.
	\]
		
	The ``dual coordinate'' $\eta$ is defined as 
	\begin{equation}
		\eta_i(\theta) \triangleq \frac{\partial \Psi(\theta)}{\partial \theta^i} = E_{\theta}[\xi_i],
		\quad i=1,\ldots,p.
	\end{equation}
	In particular, from the definition of $\theta_*$ (see \eqref{solution_center_3}),
	\begin{equation}
		\eta_i^* \triangleq \eta_i(\theta_*) = E_{\theta_*}[\xi_i] =E[\xi_i], \quad i=1,\ldots,p.
	\end{equation}
	The last equation requires the means of $\xi_i$ to coincide under $g(x)$ and $g(x;\theta_*)$. It is known that $g(x;\theta_*)$ maximizes the Shannon entropy among all probability distributions for a given $E[\xi_i],\ i=1,\ldots p$ (the ``entropy maximization property'' of an exponential family; see, e.g., \cite{Wainwright&Jordan}). The K-L divergence is the difference between the cross-entropy and Shannon entropy. Another association of K-L divergence with the exponential family derives from a geometrical perspective. That is, the $\alpha$-divergence induces a corresponding ``flat'' manifold of the parametric distribution model ($\alpha$-family).  When $\alpha=1$, the divergence is the conjugate divergence of the K-L divergence, and the corresponding manifold is the exponential family (see \cite{Amari&Nagaoka}).
	
	The $\eta$ coordinate is easily estimated. In fact, $\hat{\eta}$, the MLE for $\eta$, is the sample mean of $\xi$. Hence,
	\begin{equation}
		\label{hat_eta}
		\hat{\eta}_i= \frac{\partial \Psi}{\partial \theta_i}(\hat{\theta}) = \bar{\xi}_i \bigl(\triangleq n^{-1}\sum_{t=1}^n \xi_i(X_t)\bigr).
	\end{equation}
	In contrast, $\hat\theta$ is difficult to obtain explicitly because $\Psi$ or its derivative cannot be theoretically obtained for a complex model. This could pose a serious obstacle to application of an exponential family model to a practical problem, and is discussed in Section \ref{est_pro}.
	
	Let the matrix $\Ddot{\Psi}(\theta)$ be defined by
	\begin{equation}
		\label{ddotPsi}
		(\Ddot{\Psi}(\theta))_{ij}\triangleq
		\frac{\partial^2 \Psi(\theta) }{\partial \theta^i\partial \theta^j} 
		= E_{\theta}[(\xi_i-\eta_i)(\xi_j-\eta_j)], \quad 1 \leq i,j \leq p.
	\end{equation}
	Thus, $\Ddot{\Psi}$ is a covariance matrix of the $\xi_i$ terms under $g(x;\theta)$; hence, it is positive definite. Therefore, $\Psi(\theta)$ is a convex function. The notable property 
	\[
	g^*_{ij}(\theta) =\tilde{g}_{ij}(\theta), \quad 1 \leq i,j \leq p, \quad \forall \theta 
	\]
	is proven by the fact that both sides are equal to $(\Ddot{\Psi}(\theta))_{ij}$.
	
	The following notation is used for the third- or fourth-order cumulant:
	\begin{equation}
		\label{cumu_exp}
		\begin{split}
			&\kappa_{ijk} \triangleq E[(\xi_i-\eta^*_i)(\xi_j-\eta^*_j)(\xi_i-\eta^*_k)] = L_{ijk}\\
			&\kappa^*_{ijk} \triangleq E_{\theta_*}[(\xi_i-\eta^*_i)(\xi_j-\eta^*_j)(\xi_i-\eta^*_k)]= \frac{\partial^3\Psi(\theta_*)}{\partial \theta^i \partial \theta^j \partial \theta^k}
			= -L_{(ijk)}\\
			&\kappa^*_{ijkl} \triangleq E_{\theta_*}[(\xi_i-\eta^*_i)(\xi_j-\eta^*_j)(\xi_i-\eta^*_k)(\xi_i-\eta^*_l)]\\
			&\qquad\quad-E_{\theta_*}[(\xi_i-\eta^*_i)(\xi_j-\eta^*_j)]E_{\theta_*}[(\xi_k-\eta^*_k)(\xi_l-\eta^*_l)]\\
			&\qquad\quad-E_{\theta_*}[(\xi_i-\eta^*_i)(\xi_k-\eta^*_k)]E_{\theta_*}[(\xi_j-\eta^*_j)(\xi_l-\eta^*_l)]\\
			&\qquad\quad-E_{\theta_*}[(\xi_i-\eta^*_i)(\xi_l-\eta^*_l)]E_{\theta_*}[(\xi_j-\eta^*_j)(\xi_k-\eta^*_k)]\\
			&\qquad=\frac{\partial^4\Psi(\theta_*)}{\partial \theta^i \partial \theta^j \partial \theta^k \partial \theta^l}
			= -L_{(ijkl)}
		\end{split}
	\end{equation}
	for $1\leq i, j, k,l \leq p$.
	%
	%
	%
	%
	As the corollary of Theorem \ref{theo_general_risk}, the following result holds.
	\begin{coro}
		Additionally, if the parametric model is an exponential family, the estimation risk is given by
		\begin{equation}
			\label{expan_est_risk_exp}
			\begin{split}
				&R[g(x;\theta_*)\,|\,g(x;\hat{\theta}) ] \\
				&= \frac{1}{2n} \mathrm{tr} \Bigl(\tilde{G}^{-1}G \Bigr) \\
				&\quad+\frac{1}{24n^2}\Bigl[-8\tilde{g}^{uk}\tilde{g}^{ls}\tilde{g}^{mt}\kappa_{kst}\kappa^*_{lmu}\\
				&\qquad\qquad+9\tilde{g}^{ko}\tilde{g}^{lu}\tilde{g}^{sv}\tilde{g}^{tw}\tilde{g}^{hm}\kappa^*_{lmo}\kappa^*_{sth}
				(g_{ku}g_{vw}+g_{kv}g_{uw}+g_{kw}g_{uv})\\
				&\qquad\qquad-3\tilde{g}^{kw}\tilde{g}^{ls}\tilde{g}^{mu}\tilde{g}^{tv}\kappa^*_{lmtw}
				(g_{ks}g_{uv}+g_{ku}g_{sv}+g_{kv}g_{su})\Bigr]\\
				&\quad + O(n^{-3}).
			\end{split}
		\end{equation}
	\end{coro}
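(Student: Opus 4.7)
The plan is to derive the corollary as a direct specialization of Theorem \ref{theo_general_risk}, exploiting the special form of the log-likelihood in an exponential family. For $\log g(x;\theta) = \sum_i \theta^i \xi_i(x) - \Psi(\theta)$, only the first derivative $\partial_i \log g(x;\theta) = \xi_i(x) - \eta_i(\theta)$ depends on $x$; every higher-order partial derivative equals $-\partial_{i_1}\cdots\partial_{i_t}\Psi(\theta)$ and is therefore a deterministic function of $\theta$ alone. Evaluated at $\theta=\theta_*$, these higher derivatives coincide with the (signed) cumulants of $\xi$ introduced in (\ref{cumu_exp}).

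First I would consolidate the $L$-symbol identities that follow from this observation together with the defining MLE equation $E[\xi_i - \eta_i^*] = 0$ given in (\ref{solution_center_3}):
\begin{align*}
&g^*_{ij} = \tilde{g}_{ij} = (\Ddot{\Psi}(\theta_*))_{ij}, \\
&L_{ijk} = \kappa_{ijk}, \qquad L_{(ijk)} = \tau_{ijk} = -\kappa^*_{ijk}, \qquad L_{(ijkl)} = \tau_{ijkl} = -\kappa^*_{ijkl}, \\
&L_{(ij)k} = 0, \qquad L_{(ijk)l} = 0, \\
&L_{(ij)(kl)} = \tilde{g}_{ij}\tilde{g}_{kl}, \qquad L_{(ij)kl} = -\tilde{g}_{ij}g_{kl}.
\end{align*}
The first identity immediately collapses the leading term of (\ref{expan_est_risk}) to $(2n)^{-1}\mathrm{tr}(\tilde{G}^{-1}G)$.

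Next I would sweep through each bracketed block of the $n^{-2}$ contribution in (\ref{expan_est_risk}) and apply these identities. The two blocks containing the combination $L_{(sl)tm}+\tilde{g}_{ls}g_{tm}$ vanish outright because the exponential-family value $L_{(sl)tm} = -\tilde{g}_{sl}g_{tm}$ cancels the companion term. Likewise the four blocks of the form $L_{(kl)(um)}g_{st}-\tilde{g}_{kl}\tilde{g}_{um}g_{st}+L_{(kl)s}L_{(um)t}+L_{(kl)t}L_{(um)s}$ vanish because $L_{(kl)(um)} = \tilde{g}_{kl}\tilde{g}_{um}$ kills the first difference and $L_{(kl)s} = L_{(um)t} = 0$ kills the remaining products. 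Every other block that carries an $L_{(\ldots)i}$-factor with a single un-parenthesized trailing index disappears by the same vanishing rule. What remains are three families of contractions: a mixed cubic involving $L_{ijk}L_{(lmu)} = -\kappa_{ijk}\kappa^*_{lmu}$, a pure cubic involving $L_{(lmo)}L_{(sth)} = \kappa^*_{lmo}\kappa^*_{sth}$, and a quartic involving $L_{(lmtw)} = -\kappa^*_{lmtw}$.

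In the final step I would pool the $(i,j)\leftrightarrow(j,i)$ paired blocks that appear with matching coefficients throughout (\ref{expan_est_risk}), then use $g^*_{ij} = \tilde{g}_{ij}$ to carry out the $i,j$-contractions via $g^*_{ij}\tilde{g}^{j\alpha}\tilde{g}^{i\beta} = \tilde{g}^{\alpha\beta}$. I would collect the surviving coefficients from the Taylor-expansion prefactors $\tfrac{1}{2}, \tfrac{1}{6}, \tfrac{1}{24}$ of (\ref{talyor_D_2}) --- including the $\tau_{ijk}$-contribution which funnels into the same $\kappa\kappa^*$ contraction as the $g^*_{ij}$-terms --- and put them over the common denominator $24$, producing the coefficients $-8,\ +9,\ -3$ in (\ref{expan_est_risk_exp}).

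The main obstacle is bookkeeping rather than analysis: verifying that each of the many paired blocks contributes to exactly one of the three surviving contractions with the correct sign (taking due care of $L_{(ijk)} = -\kappa^*_{ijk}$ versus $L_{ijk} = +\kappa_{ijk}$), and checking that the $\tau_{ijk}, \tau_{ijkl}$ contributions do not double-count the $g^*_{ij}$ contributions after the symmetrization. There is no analytic subtlety beyond what is already inside Theorem \ref{theo_general_risk}; the corollary is a lengthy but purely algebraic reduction.
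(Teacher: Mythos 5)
Your proposal is correct and follows essentially the same route as the paper's own proof: specialize Theorem \ref{theo_general_risk} using the exponential-family identities $g^*_{ij}=\tilde{g}_{ij}$, $L_{(ij)k}=L_{(ijk)l}=0$, $L_{(ij)(kl)}=\tilde{g}_{ij}\tilde{g}_{kl}$, $L_{(ij)kl}=-\tilde{g}_{ij}g_{kl}$, observe that all blocks except the three $\kappa\kappa^*$-type contractions vanish, and pool the symmetrized coefficients (together with the $\tau_{ijk}$, $\tau_{ijkl}$ contributions) over the common denominator $24$ to obtain $-8$, $+9$, $-3$. The only quibble is that \eqref{solution_center_3} defines $\theta_*$ rather than the MLE, which does not affect the argument.
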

	\begin{proof}
		See Section \ref{Proof_Coro} of the Appendix.
	\end{proof}
	The estimation risk up to the second-order term is determined by the moments of the $\xi_i$ terms, $g_{ij}$, and $\kappa_{ijk}$ under $g(x)$, as well as their moments under $g(x;\theta_*)$, $\tilde{g}^{ij}$, $\kappa^*_{ijk}$, and $\kappa^*_{ijkl}$.
	
	Note that the first-order term can be rewritten in different ways:
	\begin{align}
		\label{expan_est_risk_2}
		R[g(x;\theta_*)\,|\,g(x;\hat{\theta}) ] &= \frac{1}{2n} \mathrm{tr} \Bigl(\tilde{G}(\theta_*)^{-1}G(\theta_*) \Bigr)  + o(n^{-1})\\
		\label{expan_est_risk_4}
		&=\frac{p}{2n}+ \frac{1}{2n}\mathrm{tr} \Bigl(\tilde{G}(\theta_*)^{-1}\Bigl(G(\theta_*)- \tilde{G}(\theta_*)\Bigr)\Bigr)  + o(n^{-1})\\
		\label{expan_est_risk_3}
		&=\frac{p}{2n}+\frac{1}{2n} \mathrm{tr} \Bigl(\tilde{G}(\theta_*)^{-1}(S - S^*) \Bigr)
		+ o(n^{-1}),
	\end{align}
	where 
	\[
	(S)_{ij} \triangleq E[\xi_i \xi_j],\qquad (S^*)_{ij} \triangleq E_{\theta_*}[\xi_i \xi_j],\quad i, j=1,\ldots,p.
	\]
	Further, \eqref{expan_est_risk_3} can be proven as follows.
	As $E[\xi_i] = E_{\theta_*}[\xi_i]=\eta_i^*$, 
	\begin{align*}
		(G(\theta_*))_{ij}& = E[(\xi_i - \eta_i^*)(\xi_j- \eta_j^*)]\\
		&=E[\xi_i \xi_j]- E[\xi_i]E[\xi_j]\\
		&=E[\xi_i \xi_j]- E_{\theta_*}[\xi_i \xi_j]+E_{\theta_*}[\xi_i \xi_j]-E_{\theta_*}[\xi_i]E_{\theta_*}[\xi_j]\\
		&=(S)_{ij} - (S^*)_{ij} + (G^*(\theta_*))_{ij}\\
		&=(S)_{ij} - (S^*)_{ij} + (\tilde{G}(\theta_*))_{ij},\\
	\end{align*}
	which means
	\begin{equation}
		G(\theta_*) - \tilde{G}(\theta_*) = S - S^*.
	\end{equation}
	
	Because $G(\theta_*)$ and $\tilde{G}(\theta_*)$ are the variance--covariance matrices of the $\xi$ terms, respectively, under $g(x)$ and $g(x;\theta_*)$, the first-order term in \eqref{expan_est_risk_2} is interpreted as the distance between the variance--covariance matrices under $g(x)$ and $g(x;\theta_*)$.
	
	From equation \eqref{expan_est_risk_4} or \eqref{expan_est_risk_3}, 
	\[
	G(\theta_*)- \tilde{G}(\theta_*)=S- S^* < \bigl(>\bigr) 0 \Longrightarrow \text{First-order terms of $R[g(x;\theta_*)\,|\,g(x;\hat{\theta}) ]$} < \bigl(>\bigr) \frac{p}{2n}.
	\]
	The first-order risk convergence speed is higher than the case where the model includes $g(x)$, if the second moment matrix of the $\xi_i$ terms under $g(x;\theta_*)$  is larger than that of $g(x)$.
	%
	%
	%
	%
	%
	
	As a first example, let us consider the special case in which $\Psi(\theta)$ is a quadratic function.\\
	\\
	-- \textit{Example 3: Quadratic Exponential Model} --\\
	Let $\Psi$ be defined by
	\begin{equation}
		\label{quad_Psi}
		\Psi(\theta) = \sum_i m_i \theta^i + \frac{1}{2} \sum_{i,j} \theta^i \theta^j q_{ij}
		=m \theta^t+ \frac{1}{2}\theta Q \theta^t,
	\end{equation}
	where 
	\[
	\theta=(\theta^1,\ldots,\theta^p),\quad m=(m_1,\ldots,m_p),\quad (Q)_{ij} = q_{ij},
	\]
	and $Q$ be positive-definite. Because the higher-order cumulant vanishes for the model distribution, the estimation risk is given by
	\begin{equation}
		\label{risk_normal}
		\begin{split}
			&\frac{1}{2n} \mathrm{tr} \Bigl(\tilde{G}^{-1}(\theta_*)G(\theta_*) \Bigr)=\frac{1}{2n} \mathrm{tr} \Bigl(Q^{-1}G(\theta_*) \Bigr)\\
			&=\frac{p}{2n}+ \frac{1}{2n} \mathrm{tr} \Bigl(Q^{-1}(G(\theta_*)-Q) \Bigr).
		\end{split}
	\end{equation}
	For this type of exponential family, the $\xi_i$ terms follow the normal distribution
	\[
	\xi(X) = (\xi_1(X), \ldots, \xi_p(X)) \sim N_p( m+\theta Q, Q).
	\]
	From \eqref{risk_normal}, $Q>G(\theta_*)$ indicates faster convergence than the well-specified model (i.e., the model that contains the true distribution) case.
	
	The next example is the multinomial distribution, where the explicit form of the second-order term is given.\\
	%
	%
	%
	%
	%
	\\
	-- \textit{Example 4: Multinomial Distribution Model } --\\
	Consider a multinomial distribution with $p+1$ possible values $x_i, i=0,\ldots,p$, with the corresponding probabilities $m=(m_0,\ldots,m_p)$. This is an exponential family \eqref{manifold_exp}, where
	\begin{align*}
		&\theta^i \triangleq \log (m_i/m_0), \quad i=0,\ldots, p,\\
		&\xi_i(x) \triangleq 
		\begin{cases}
			1,&\text{ if $x = x_i$,}\\
			0,&\text{ otherwise,}
		\end{cases}
		\quad i=1,\ldots,p
	\end{align*}
	and $d\mu$ is the counting measure on $\{x_1,\ldots,x_p\}$. Here,
	\[
	\Psi(\theta) = \log\bigl(\sum_{i=0}^p \exp(\theta_i)\bigr)=-\log m_0=-\log\bigl(1-\sum_{i=1}^p m_i\bigr).
	\]
	
	Suppose that $g(x)$ is continuous, and the parametric model $g(x;m)$ is an approximation of $g(x)$ with the step function 
	\[
	g(x;m) = \sum_{i=0}^p I(x \in S_i) \frac{m_i}{Vol(S_i)},
	\]
	where $S_i, i=0,1,\ldots,p$ is the partition of the range of $x$ with volume 
	\[
	Vol(S_i) \triangleq  \int_{S_i} 1 d\mu(x),
	\]
	and  $I(x \in S_i)$ is an indicator function of $S_i$. In this case, from \eqref{solution_center_3}, the information projection $g(x; m^*)$  is given by $m^*_i = P(X \in S_i | g(x))$. The step-function model is not an exponential family. However, because $\alpha$-divergence is invariant with respect to contraction by a sufficient statistic, the divergence between the two multinomial distributions (where $d\mu$ is the counting measure) equals the divergence between the corresponding step functions (where $d\mu$ is the continuous measure). Hence,  the argument of the estimation risk can be deduced from that of the multinomial distribution model. It is notable that, if $X$ is originally a discrete random variable, the model always contains $g(x)$. 
	
	The asymptotic expansion of the estimation risk up to second order can be derived as follows (this corresponds to equation (41) of \cite{Sheena2} with $\alpha=-1$, which investigates the asymptotic estimation risk for a well-specified  model).
	\begin{equation}
		\label{est_risk_discrete}
		R[g(x;\theta)\, | \, g(x;\hat\theta)]=\frac{p}{2n} + \frac{1}{12n^2} (M-1) +O(n^{-3}), \qquad M = \sum_{i=0}^p {m_i}^{-1},
	\end{equation}
	where $\theta=(m_1,\ldots,m_p)$ is the true-distribution free parameter. Because $M \geq (p+1)^2$, the second-order term is always positive. If $m_i$ is close to zero,  the convergence speed slows considerably. A numerical example of this model is given in the next section.
	\section{Criterion for Model Complexity and \& Sample Size }
	\label{Criteria}
	\subsection{$p-n$ criterion}
	\label{p-n_criteria}
	\ \ In this section, the aim is to derive a simple criterion to indicate whether the MLE is sufficiently close to the best distribution in the model (the information projection). 
	To use \eqref{expan_est_risk} or \eqref{expan_est_risk_exp}, the distributional properties in \eqref{expan_est_risk} or \eqref{expan_est_risk_exp}, which depend on unknown $\theta_*$ and/or $g(x)$, must first be estimated. Next, a certain threshold $C$ 
	with which we compare the estimated risk must be set. 
	
	If the estimated risk is not sufficiently small compared with the $C$, there are two possible remedies: increasing $n$ or reducing $p$. As the risk convergence speed depends on the geometrical properties of $g(x)$ and $\mathcal{M}$, the author conjectures that $p$ reduction does not always reduce the risk. However, as the first-order term of the risk expansion is almost $p/(2n)$ when $g(x)$ is close to $g(x;\theta_*)$, $p$ reduction is likely to reduce the risk in many cases. Based on this observation, the criterion developed in this study is named the ``$p-n$ criterion.'' 
	
	First, the risk estimation is considered. To use  \eqref{expan_est_risk}, the following properties must be estimated:
	\[
	G^*=(g^*_{ij}),\quad \tilde{G}=(\tilde{g}_{ij}), \quad G=(g_{ij}), \quad L_{ijk}, \quad L_{(ij)k}, \quad \cdots,\qquad  1\leq i,j,k,l \leq p,
	\]
	Naive estimators of these properties (denoted by the ``hat'' mark: $\hat{G}$, $\hat{L}_{ijk}$, etc.) are gained by replacing $\theta_*$ with MLE $\hat{\theta}$, and $g(x)$ with the empirical distribution. For example, 
	\begin{align}
		(\hat{G})_{ij} &\triangleq n^{-1}\sum_{t=1}^n 
		\frac{\partial \hfill}{\partial \theta^i}\log g(X_t;\theta)\Big|_{\theta=\hat{\theta}}
		\frac{\partial \hfill}{\partial \theta^j}\log g(X_t;\theta)\Big|_{\theta=\hat{\theta}}\\
		(\hat{\tilde{G}})_{ij} &\triangleq -n^{-1}\sum_{t=1}^n 
		\frac{\partial^2 \hfill}{\partial \theta^i\partial \theta^j}\log g(X_t;\theta)\Big|_{\theta=\hat{\theta}}\\
		(\hat{G}^*)_{ij} & \triangleq \int g(x;\hat\theta)\Bigl( \frac{\partial \ }{\partial \theta^i}\log g(x;\theta) \Big|_{\theta=\hat\theta}\Bigr) 
		\Bigl( \frac{\partial \ }{\partial \theta^j} \log g(x;\theta)\Big|_{\theta=\hat\theta} \Bigr) d\mu. \\
		L_{ijk} & \triangleq n^{-1}\sum_{t=1}^n 
		\frac{\partial \hfill}{\partial \theta^i}\log g(X_t;\theta)\Big|_{\theta=\hat{\theta}}
		\frac{\partial \hfill}{\partial \theta^j}\log g(X_t;\theta)\Big|_{\theta=\hat{\theta}}
		\frac{\partial \hfill}{\partial \theta^k}\log g(X_t;\theta)\Big|_{\theta=\hat{\theta}}\\
		L_{(ij)k} & \triangleq n^{-1}\sum_{t=1}^n 
		\frac{\partial^2 \hfill}{\partial \theta^i \partial \theta^j}\log g(X_t;\theta)\Big|_{\theta=\hat{\theta}}
		\frac{\partial \hfill}{\partial \theta^k}\log g(X_t;\theta)\Big|_{\theta=\hat{\theta}}
	\end{align}
	The estimated risk can be obtained using these estimators; however, because of the complication of the asymptotic risk \eqref{expan_est_risk}, the given $p-n$ criterion is difficult to handle. Here, only the criterion gained from the first-order asymptotic risk in \eqref{expan_est_risk} is stated. For $C$, the $p-n$ criterion is given as follows.
	\begin{description}
		%
		%
		\item[Criterion for a general model]
		\begin{equation}
			\label{criteria_general}
			C \geq \frac{1}{2n}\mathrm{tr} \Bigl(\hat{\tilde{G}}^{-1}\hat{G}\hat{\tilde{G}}^{-1}\hat{G}^* \Bigr)
		\end{equation}
		
		For the exponential family, a simpler criterion can be derived. To use  \eqref{expan_est_risk_exp}, the following properties must be estimated:
		\[
		\tilde{G}=(\tilde{g}_{ij}), \quad G=(g_{ij}), \quad \kappa_{ijk},\quad \kappa^*_{ijk}, \quad \kappa^*_{ijkl},\qquad  1\leq i,j,k,l \leq p.
		\]
		
		$\hat{G}$ is the sample covariance matrix of the $\xi_i$ terms, $\hat{\Sigma}$:
		\begin{equation}
			\label{est_g}
			\hat{G} = \hat{\Sigma},\quad {\hat{g}}_{ij} = \bigl(\hat{\Sigma}\bigr)_{ij},\quad \bigl(\hat{\Sigma}\bigr)_{ij} \triangleq n^{-1}\sum_{t=1}^n
			(\xi_i(X_t)- \bar{\xi}_i)(\xi_j(X_t)- \bar{\xi}_j).
		\end{equation}
		Similarly, the estimator of the true third-order cumulant is given by the sample third-order cumulant: 
		\begin{equation}
			\label{est_true_kappa3}
			\hat{\kappa}_{ijk}  = n^{-1}\sum_{t=1}^n
			(\xi_i(X_t)- \bar{\xi}_i)(\xi_j(X_t)- \bar{\xi}_j)(\xi_k(X_t)- \bar{\xi}_k).
		\end{equation}
		Further, 
		\begin{align}
			\hat{\tilde{G}}& = \Ddot{\Psi}(\hat{\theta} ), \quad {\hat{\tilde{g}}}_{ij}= \bigl(\Ddot{\Psi}(\hat{\theta})\bigr)_{ij} \label{est_g_tilde}\\
			\hat{\kappa}^*_{ijk} & = \frac{\partial^3 \ }{\partial \theta^i \partial \theta^j \partial \theta^k}\Psi(\theta)\Big|_{\theta=\hat\theta} \label{est_kappa3}\\
			\hat{\kappa}^*_{ijkl} &=\frac{\partial^4 \ }{\partial \theta^i \partial \theta^j \partial \theta^k \partial \theta_l}\Psi(\theta)\Big|_{\theta=\hat\theta}. \label{est_kappa4}
		\end{align}
		%
		%
		Consequently, for an exponential family, the $p-n$ criterion is given as follows.
		\item[Criterion for an exponential family]
		\begin{equation}
			\begin{split}
				\label{criteria_expo}
				C &\geq \frac{1}{2n} \mathrm{tr} \Bigl(\hat{\Sigma}(\Ddot{\Psi}(\hat{\theta}) )^{-1}\Bigr) +\\
				&\quad+\frac{1}{24n^2}\Bigl[-8\hat{\tilde{g}}^{uk}\hat{\tilde{g}}^{ls}\hat{\tilde{g}}^{mt}\hat{\kappa}_{kst}\hat{\kappa}^*_{lmu}\\
				&\qquad\qquad+9\hat{\tilde{g}}^{ko}\hat{\tilde{g}}^{lu}\hat{\tilde{g}}^{sv}\hat{\tilde{g}}^{tw}\hat{\tilde{g}}^{hm}\hat{\kappa}^*_{lmo}\hat{\kappa}^*_{sth}
				(\hat{g}_{ku}\hat{g}_{vw}+\hat{g}_{kv}\hat{g}_{uw}+\hat{g}_{kw}\hat{g}_{uv})\\
				&\qquad\qquad-3\hat{\tilde{g}}^{kw}\hat{\tilde{g}}^{ls}\hat{\tilde{g}}^{mu}\hat{\tilde{g}}^{tv}\hat{\kappa}^*_{lmtw}
				(\hat{g}_{ks}\hat{g}_{uv}+\hat{g}_{ku}\hat{g}_{sv}+\hat{g}_{kv}\hat{g}_{su})\Bigr].
			\end{split}
		\end{equation}
	\end{description}
	Only a naive estimator, such as the MLE or empirical moments, is used. More sophisticated estimators, such as unbiased estimators, shrinkage estimators, and bootstrap estimators, can be used, but they are outside the scope of this paper.
	%
	%
	%
	%
	
	Now, let us move to the second concern: selection of $C$. Another often used measure of the closeness between two distributions is the error rate, which is more intuitive than the divergence and suitable for setting a threshold. Let $g_i(x), i=1,2$ be the p.d.f. If both $g_i(x),\ i=1,2$, are known, the discrimination rule is as follows. 
	
	For the sample $X$ from either $g_1(x)$ or $g_2(x)$, 
	\[
	\frac{g_{i_1}(X)}{g_{i_2}(X)} > 1 \Longleftrightarrow \text{ Judge that $X$ is generated from $g(x;\theta_{i_1})$}
	\]
	The Bayes error rate, $Er$, i.e., the probability that this rule gives an error, is formally defined by
	\[
	Er[g_1(x)\, | \, g_2(x)] \triangleq \frac{1}{2}\int \min\Bigl(g_1(x), g_2(x)\Bigr) d\mu.
	\]
	%
	%
	%
	%
	%
	The next theorem states the relation between $Er$ and the K-L divergence. 
	\begin{theo}
		If $D[g_1(X) \,| \, g_2(x)] \leq \delta$, then
		\begin{equation}
			\label{rel_error_D}
			Er[g(x;\theta_1)\, | \, g(x;\theta_2)] \geq \min\{ t \,|\, (x,t) \in A(\delta) \}, 
		\end{equation}
		where 
		\[
		A(\delta)\triangleq \Bigl\{ (x,t) \,\Big|\, x\log{\Bigl(\frac{1-2t}{x} + 1\Bigr)}+(1-x) \log{\Bigl(\frac{2t-1}{1-x}+1\Bigr)} = -\delta,\quad 0< x < 2t < 1\Bigr\}.
		\]
	\end{theo}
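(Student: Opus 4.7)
The plan is to reduce the K--L bound to a comparison between two-point (Bernoulli) distributions via the data-processing inequality, identify the resulting inequality with the defining equation of $A(\delta)$, and conclude by a monotonicity argument.

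First, I would apply the data-processing (equivalently log-sum) inequality to the binary partition $\{A, A^c\}$ with $A \triangleq \{x : g_1(x) \leq g_2(x)\}$. Writing $p \triangleq \int_A g_1\, d\mu$ and $q \triangleq \int_A g_2\, d\mu$ and using that $\min(g_1,g_2)=g_1$ on $A$ and $=g_2$ on $A^c$, the Bayes error rate becomes
\[
2\, Er[g_1 \,|\, g_2] \;=\; \int_A g_1\, d\mu + \int_{A^c} g_2\, d\mu \;=\; p + 1 - q,
\]
so $q - p = 1 - 2\, Er \geq 0$. Applying data processing to the indicator of $A$,
\[
\delta \;\geq\; D[g_1 \,|\, g_2] \;\geq\; p\log(p/q) + (1-p)\log((1-p)/(1-q)).
\]

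Next, I would match this to the definition of $A(\delta)$. Set $t = Er[g_1\,|\,g_2]$ and $x = p$, so that $q = x + 1 - 2t$ and $1-q = 2t - x$. Using $(x+1-2t)/x = 1 + (1-2t)/x$ and $(2t-x)/(1-x) = 1 + (2t-1)/(1-x)$, the right-hand side above rearranges to exactly $-F(x,t)$, where
\[
F(x,t) \;\triangleq\; x\log\Bigl(\tfrac{1-2t}{x}+1\Bigr) + (1-x)\log\Bigl(\tfrac{2t-1}{1-x}+1\Bigr)
\]
is the left-hand side of the defining equation of $A(\delta)$. Equivalently, $-F(x,t) = D[\mathrm{Ber}(x)\,|\,\mathrm{Ber}(x+1-2t)]$. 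Hence at the observed point $(x,t) = (p,\, Er)$ we have $-F(x,t) \leq \delta$, while $A(\delta) = \{(x,t) : -F(x,t) = \delta,\ 0 < x < 2t < 1\}$ is the corresponding level curve.

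Finally, I would invoke monotonicity to extract the bound. For each fixed $x \in (0,1)$, the map $t \mapsto -F(x,t) = D[\mathrm{Ber}(x)\,|\,\mathrm{Ber}(x+1-2t)]$ is strictly decreasing on $(x/2,\,1/2)$: as $t$ increases over this interval, the second Bernoulli parameter $b = x+1-2t$ descends from $1$ to $x$, and since $\partial_b D[\mathrm{Ber}(x)\,|\,\mathrm{Ber}(b)] = (b-x)/(b(1-b)) > 0$ for $b > x$, the KL divergence drops continuously from $+\infty$ to $0$. Thus there is a unique $t_*(x) \in (x/2,\,1/2)$ with $-F(x, t_*(x)) = \delta$, and the bound $-F(p,\, Er) \leq \delta = -F(p,\, t_*(p))$ forces $Er \geq t_*(p)$. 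Since $(p, t_*(p)) \in A(\delta)$, this yields $Er \geq t_*(p) \geq \min\{t : (x,t) \in A(\delta)\}$, as claimed. The only point that requires care is the boundary behavior at $p = 0$ or $q = 1$ (where the constraint $0 < x < 2t < 1$ fails): in either degenerate case either the data-processing inequality already forces the bound trivially or the same monotonicity argument applies in the limit, which I would dispatch by a short continuity/case argument.
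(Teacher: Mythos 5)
Your proof is correct, but it takes a genuinely different route from the paper's. The paper discretizes both integrals over a fine partition, uses concavity of the logarithm (Jensen) to reduce to a configuration in which the likelihood ratio takes only two values $\Delta^+>1>\Delta^-$, solves the resulting constrained minimization of $t(x;\Delta^+)$ subject to the divergence constraint, and finally passes to the limit as the partition refines (which is also where its auxiliary quantity $\tilde{1}$ tends to $1$). You instead apply the data-processing inequality to the single binary partition $\{g_1\le g_2\}$, observe that $2\,Er=p+1-q$ depends only on the induced Bernoulli marginals, identify the coarse-grained divergence $D[\mathrm{Ber}(p)\,|\,\mathrm{Ber}(q)]$ with $-F(x,t)$ at $(x,t)=(p,Er)$, and finish by monotonicity of $b\mapsto D[\mathrm{Ber}(x)\,|\,\mathrm{Ber}(b)]$ for $b>x$. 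These are really the same mechanism — the paper's Jensen step is a discretized data-processing inequality, and its optimal $x=s/m$ is exactly your $p=\int_{\{g_1\le g_2\}}g_1\,d\mu$ — but your version is shorter, measure-theoretically direct, and avoids the paper's somewhat informal approximation-and-limit argument. What the paper's longer route buys is an explicit description of the extremal two-valued configurations achieving the bound for each $x$, which is not needed for the stated inequality. Two small points you should make explicit in a final write-up: the argument needs $\delta>0$ (for $\delta=0$ the set $A(\delta)$ is empty since $2t<1$ is strict), and the degenerate cases $p\in\{0,1\}$ or $q=1$ should be handled as you indicate, noting that $q=1$ with $p<1$ forces infinite coarse-grained divergence and that the $p\to 0$ boundary value of the level curve is $t=e^{-\delta}/2$, which still dominates $\min\{t\,|\,(x,t)\in A(\delta)\}$.
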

	\begin{proof}
		See \ref{dive_error} in the Appendix.
	\end{proof}
	Suppose that the standard of closeness between two distributions in view of the Bayes error rate is set to
	\begin{equation}
		\label{closeness_error}
		Er \geq 1/2- \alpha,
	\end{equation}
	where $\alpha$ is a certain number, such as  $\alpha=0.05, 0.01$. From \eqref{rel_error_D}, if 
	\begin{equation}
		\label{dev_stand_from_error}
		\min\{ t \,|\, (x,t) \in A(\delta) \} \geq 1/2- \alpha,
	\end{equation}
	standard \eqref{closeness_error} is satisfied. 
	
	Analytical calculation of $\min\{ t \,|\, (x,t) \in A(\delta) \}$ is difficult. The approximation when $t$ is close to $1/2$ is given here. As $\log{(1+x)} \doteqdot x - x^2/2$ around $x=0$,
	\begin{align*}
		& x\log{\Bigl(\frac{1-2t}{x} + 1\Bigr)}+(1-x) \log{\Bigl(\frac{2t-1}{1-x}+1\Bigr)} \\
		&= x\Bigl(\frac{1-2t}{x}\Bigr) -\frac{x}{2}\Bigl(\frac{1-2t}{x}\Bigr)^2
		+ (1-x)\frac{2t-1}{1-x}-\frac{(1-x)}{2}\Bigl(\frac{2t-1}{1-x}\Bigr)^2= -\frac{1}{2}\frac{(1-2t)^2}{x(1-x)}.
	\end{align*}
	Therefore, $A(\delta)$ is approximated by 
	\[
	A^*(\delta) \triangleq \Bigl\{ (x,t) \,\Big|\, t= \frac{1}{2}\Bigl(1-\sqrt{2\delta x(1-x)}\Bigr),\quad 0< x < 2t < 1\Bigr\}.
	\]
	Note that
	\[
	\min\{ t \,|\, (x,t) \in A^*(\delta) \} \geq \min_{0<x<1}{\frac{1}{2}\Bigl(1-\sqrt{2\delta x(1-x)}\Bigr)
	}= \frac{1}{2}-\sqrt{\delta/8},
	\]
	Hence, the condition $\sqrt{\delta/8} \leq \alpha$ or, equivalently, $\delta \leq  8\alpha^2$ is approximately sufficient for \eqref{dev_stand_from_error}. This result is stated as a corollary. 
	%
	%
	%
	%
	\begin{coro}
		Let $\delta = D[g(x;\theta_1)\, | \, g(x;\theta_2)]$. If 
		\begin{equation}
			\label{dev_stand_from_error2}
			\min\{ t \,|\, (x,t) \in A(\delta) \} \geq 1/2- \alpha,
		\end{equation}
		then 
		\begin{equation}
			\label{closeness_error2}
			Er[g(x;\theta_1)\, | \, g(x;\theta_2)] \geq 1/2- \alpha.
		\end{equation}
		Condition \eqref{dev_stand_from_error2} is approximately equivalent to 
		\begin{equation}
		\label{closeness_error3}
			\delta  \leq 8\alpha^2.
		\end{equation}
	\end{coro}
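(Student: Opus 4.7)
The corollary splits naturally into two claims, one exact and one approximate, and the ingredients for both have essentially been assembled in the paragraphs immediately preceding the statement. The plan is to (i) read off the first implication from the preceding theorem, and (ii) justify the second implication by the Taylor expansion already sketched.

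For the first part I would argue as follows. Let $\delta = D[g(x;\theta_1)\,|\,g(x;\theta_2)]$, so in particular $D[g(x;\theta_1)\,|\,g(x;\theta_2)]\le\delta$, and the preceding theorem applies to give
\[
Er[g(x;\theta_1)\,|\,g(x;\theta_2)] \;\ge\; \min\{t\,|\,(x,t)\in A(\delta)\}.
\]
Combining this inequality with hypothesis \eqref{dev_stand_from_error2} directly yields the conclusion \eqref{closeness_error2}. No further work is needed for this direction.

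For the second part I would justify the approximate equivalence with \eqref{closeness_error3} by reproducing the computation already indicated in the surrounding text. Using $\log(1+u) = u - u^2/2 + O(u^3)$ with $u=(1-2t)/x$ in the first logarithm and $u=(2t-1)/(1-x)$ in the second, the linear terms cancel (since $x\cdot(1-2t)/x + (1-x)\cdot(2t-1)/(1-x) = 0$), and the defining equation of $A(\delta)$ reduces to $-\tfrac{1}{2}(1-2t)^2/\bigl(x(1-x)\bigr) = -\delta$. Solving for $t$ under the constraint $t<1/2$ gives the curve $A^*(\delta)$ parametrised by $t = \tfrac{1}{2}\bigl(1-\sqrt{2\delta x(1-x)}\bigr)$. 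Minimising in $x\in(0,1)$ uses only that $x(1-x)$ attains its maximum $1/4$ at $x=1/2$, so that $\min_x t = \tfrac{1}{2} - \sqrt{\delta/8}$. The inequality \eqref{dev_stand_from_error2} then becomes $\sqrt{\delta/8}\le\alpha$, i.e.\ \eqref{closeness_error3}, and conversely \eqref{closeness_error3} implies \eqref{dev_stand_from_error2} through the same chain.

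The only genuinely delicate point is the word \emph{approximately}. The Taylor expansion is valid when $(1-2t)/x$ and $(2t-1)/(1-x)$ are small, i.e.\ when $t$ is close to $1/2$ and $x$ is not too close to the endpoints. Since the intended regime is small $\delta$ (whence $t$ lies near $1/2$) and small $\alpha$ (the stated values $0.05, 0.01$), the error from truncating at second order is $O(\delta^{3/2})$ and is negligible compared with the principal term $\sqrt{\delta/8}$. A fully rigorous statement would quantify this remainder, but for the corollary as formulated it suffices to note that $A(\delta)$ and $A^*(\delta)$ agree to leading order in $1-2t$, which is the main point I would emphasise in the write-up.
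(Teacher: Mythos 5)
Your proposal is correct and follows essentially the same route as the paper: the first implication is read off directly from the preceding theorem, and the approximate equivalence is obtained by the same second-order Taylor expansion of the logarithms, reduction of $A(\delta)$ to $A^*(\delta)$, and minimisation of $\tfrac{1}{2}\bigl(1-\sqrt{2\delta x(1-x)}\bigr)$ at $x=1/2$ to get the threshold $\tfrac{1}{2}-\sqrt{\delta/8}$. Your added remark on the validity regime of the expansion (small $\delta$, $t$ near $1/2$) is a reasonable clarification of what the paper leaves implicit in the word ``approximately.''
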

	
	Consequently, the $C$ in \eqref{criteria_general} or \eqref{criteria_expo} corresponding to the error rate $1/2-\alpha$ is given by the solution of $\delta$ (say, $C_\alpha$) for the equation 
	\begin{equation}
		\label{critical_eq}
		\min\{ t \,|\, (x,t) \in A(\delta) \} = 1/2- \alpha.
	\end{equation}
	More simply, $C_\alpha$ is given by 
	\begin{equation}
		\label{C_alpha}
		C_\alpha = 8 \alpha^2. 
	\end{equation}
	Thus, if $\alpha=0.01(0.05)$, then $C_\alpha=1/1250(1/50)$.
	\\
	\\
	-- \textit{Example 3 (continued)} --\\
	For the quadratic exponential model of \eqref{quad_Psi}, the right-hand side of \eqref{criteria_expo} 
	is given by 
	\[
	\frac{1}{2n}\mathrm{tr} \Bigl(Q^{-1}\hat{\Sigma} \Bigr).
	\]
	Taking an empirical approach and letting $Q$ be the sample covariance matrix $\Sigma$, the r.h.s. of  \eqref{criteria_expo} equals $p/2n$. With $C_\alpha$ in \eqref{C_alpha}, the $p-n$ criterion is approximately equivalent to
	\begin{equation}
		\label{p-n criteria quad}
		\frac{p}{n} \leq 16\alpha^2.
	\end{equation}
	Note that this criterion does not guarantee that $n$ is sufficiently large to allow $O(n^{-3})$ in \eqref{expan_est_risk_exp} to be neglected. This is a different concern, but we do not address it here.
	\\
	\\
	-- \textit{Example 4 (continued)} --
	
	For a multinomial distribution and the first-order approximation in \eqref{est_risk_discrete}, the $p-n$ criterion equals \eqref{p-n criteria quad}. The second-order approximation gives the following $p-n$ criterion:
	\begin{equation}
		\label{p-n criteria multi}
		96 n^2 \alpha^2 - 6 n p - (\hat{M}-1) > 0,
	\end{equation}
	where 
	\[
	\hat{M}=\sum_{i=0}^{p} {\hat{m}_i}^{-1}
	\]
	and $\hat{m}_i$ is the MLE, the sample relative frequency, for each $i$. Applying the criterion for $n$ determination gives the formula
	\begin{equation}
		\label{sample_size_formula_discrete}
		n \geq \frac{3p+\sqrt{9p^2+96\alpha^2(\hat{M}-1)}}{96\alpha^2}.
	\end{equation}
	In contrast, if the criterion is used for category determination, i.e., the ``bin number'' or ``bin width'' problem with regard to the histogram, the formula is given by
	\begin{equation}
		6np + \hat{M}  < 96n^2\alpha^2+1.
	\end{equation}
	Use of these criteria for practical examples is discussed in Section \ref{example_sec}.
	%
	%
	%
	%
	%
	\subsection{Algorithm for $p-n$ Criterion of Exponential Family}
	\label{est_pro}
	This section describes calculation of the right-hand side of \eqref{criteria_expo}. 
	If we can calculate the function $\Psi(\theta)$ analytically, the algorithm is simply the following.
	\begin{description}
		\item[Step 1] Calculate $\hat{\eta}_i=\bar{\xi}_i,\ i=1,\ldots,p$ from the sample.
		\item[Step 2] Solve the simultaneous equations w.r.t. $\theta$ in \eqref{hat_eta} to give $\hat{\theta}=(\hat{\theta}_1,\ldots,\hat{\theta}_p)$:
		\begin{equation}
			\label{eq_for_theta}
			\hat{\eta}_i = \eta_i(\hat\theta) = \frac{\partial \Psi}{\partial \theta_i}(\hat\theta),\qquad i=1,\ldots,p.
		\end{equation}
		\item[Step 3] Calculate \eqref{est_g_tilde}, \eqref{est_kappa3}, and \eqref{est_kappa4} from $\Psi(\hat\theta)$.
		\item[Step 4] Calculate \eqref{est_g} and \eqref{est_true_kappa3} from the sample.
		\item[Step 5] Calculate the right-hand side of \eqref{criteria_expo} and compare it with $C_\alpha$.
	\end{description}
	Often, $\Psi(\theta)$ is not explicitly given, especially for a complex model.  Then, $\hat\theta$ can be iteratively calculated using the Newton--Raphson method with the Jacobian matrix \eqref{est_g_tilde}. Because $\Ddot{\Psi}(\theta)$ is the variance-covariance matrix of the $\xi_i$ terms under the $g(x;\theta)$ distribution, its value can be approximated from the generated sample. The alternative methods are as follows.
	\begin{description}
		\item[Step 2']
		Iteratively search for $\hat\theta$ with
		\[
		\theta^{(n+1)}  = \theta^{(n)}  - \bigl(\eta(\theta^{(n)})-\hat{\eta}\bigr)\bigl(\ddot{\Psi}(\theta^{(n)})\bigr)^{-1},
		\]
		where $\eta(\theta^{(n)})$ and $\ddot{\Psi}(\theta^{(n)})$ are approximated by the sample mean and the sample covariance matrix of the $\xi_i$ terms from the  $g(x;\theta^{(n)})$ distribution. 
	\end{description}
	Further, \eqref{est_g_tilde}, \eqref{est_kappa3}, and \eqref{est_kappa4} can also be approximated using the generated sample.
	\begin{description}
		\item[Step 3']
		Approximate \eqref{est_g_tilde}, \eqref{est_kappa3}, and \eqref{est_kappa4} using the sample moments and cumulants, where the sample is  generated from $g(x;\hat{\theta})$.
	\end{description}
	
	The point here is that $\Psi(\theta)$ is not required for sample generation in Steps 2' and 3' if methods such as MCMC (requiring no normalizing constant) are used.  Although Steps 2' and 3' are computationally heavy tasks, they enable construction of a complex model without calculation of $\Psi$.

	\subsection{Example -- $p-n$ criterion application --}
	\label{example_sec}
	\ \ This section demonstrates use of the $p-n$ criterion for a particular problem through two practical examples under the exponential family model. Systematic selection of the $\xi_i$ terms is important for model construction, but it is not studied here. For further discussion of this issue, see the reference given in the Introduction.
	%
	%
	%
	%
	%
	\\
	\\
	-- \textit{Example 5: Red Wine } --\\
		The first example is a well-known dataset on wine quality, taken from the U.C.I. Machine Learning Repository (https://archive.ics.uci.edu/ml/datasets/wine+quality).
	
	Only red wine data are used. The sample size is 1599,  and the variables consist of 11 chemical substances (continuous variables) and ``quality'' indexes (integers from 3 to 8). The vector of the chemical substances and the ``quality'' variable are denoted by $x^{(1)} =(x^{(1)}_{1},\ldots, x^{(1)}_{11})$ and $x^{(2)}$, respectively.  
	We divided the sample into two halves randomly, one of which (``data\_base'') was used for the model formulation and the other (``data\_est'') was used for the estimation of the parameter. 
	
	As the model formulation, we determined the following: normalization method of the original data, the reference (probability) measure $d\mu(x)$ and $\xi$ elements. Using ``data\_base'', we proceed as;
		\\
	1. Each variable $x^{(1)}_i(i=1,\ldots,11)$ is divided by twice of its maximum such that its range is $[0,\  1)$. Further, 2 is subtracted from each ``quality'' index to give a range of $\{1,2,\ldots,6\}$.\\
	2. As $d\mu(x)$, 11 independent Beta distributions are applied to $x^{(1)}$ so that their means and variances are equal to those of the ``data\_base''. The multinomial distribution of $x^{(2)}$ is adopted, using each category's sample relative frequency as the category probability parameter (say, $m_i, \ i=1,\ldots,6$). In addition, $x^{(1)}$ and $x^{(2)}$ are taken to be independent. 

Consequently, $d\mu$ is selected as
	\[
	x=(x^{(1)},x^{(2)}),\quad d\mu(x) = \prod_{i=1}^{11} {x^{(1)}_i}^{(\beta_{1i}-1)} {(1-x^{(1)}_i)}^{(\beta_{2i}-1)} d(x^{(1)}) \times \prod_{i=1}^6
	m_i^{I(x^{(2)}=i)} d^*(x^{(2)}), 
	\]
	where $d(x^{(1)})$ is the Lebesgue measure on $[0,\ 1]^{11}$, $d^*(x^{(2)})$ is the counting measure on $\{1,2,\ldots,6\}$, and $I(\cdot)$ is the indicator function. Further, $\beta_{1i}$, $\beta_{2i}$, and $m_i$ satisfy the relations
	\begin{align*}
		&\frac{\beta_{1i}}{\beta_{1i}+\beta_{2i}} = \text{ Sample mean of $x^{(1)}_i$},\quad i=1,\ldots,11\\
		&\frac{\beta_{1i}\beta_{2i}}{(\beta_{1i}+\beta_{2i})^2(\beta_{1i}+\beta_{2i}+1)} = \text{Sample variance of $x^{(1)}_i$},\quad i=1,\ldots,11\\
		&m_i = \text{Relative frequency of $i$ in $x^{(2)}$}
	\end{align*}
	3. The candidate for the $\xi_i$ terms  are  as follows: 
	\begin{align*}
		&\xi_1(x) = x^{(1)}_1 x^{(1)}_2, \quad \xi_2(x)=x^{(1)}_1 x^{(1)}_3, \quad \ldots \quad \xi_{10}(x) = x^{(1)}_1x^{(1)}_{11} \\
		&\xi_{11}(x) = x^{(1)}_2 x^{(1)}_3,\quad \ldots \quad \xi_{19}(x) = x^{(1)}_2 x^{(1)}_{11}\\
		&\hspace{20mm} \cdots \\
		&\xi_{55}(x) = x^{(1)}_{10} x^{(1)}_{11}
	\end{align*}
	and 
	\[
	\xi_{56}(x) = x^{(1)}_1x^{(2)}, \quad \ldots \quad \xi_{66}(x)= x^{(1)}_{11}x^{(2)}.
	\]
 Since some of these terms are highly correlated, we eliminate one of the pair with the correlation higher than 0.95. Actually the following 20 $\xi_i$ terms were removed from the full model: 
	\[
	\xi_i,\ i=8, 17, 19, 24, 25, 27, 32, 34, 38, 40, 43, 45, 46, 47, 49, 53, 58, 62, 64.
	\]  
	
	Consequently, an exponential family model with $p=47$ is formulated.  As the probability distribution $g(x;\theta)d\mu$ equals $d\mu$ when the $\theta$ terms all equal zero, it is denoted by $g(x; 0)$. Note that the  $g(x ;\theta_*)$ of this model is the closest to $g(x; 0)$ in the sense that
	\[
	D[g(x;\theta_*) | g(x;0)] = \min_{h \in \mathcal{H}} D[h(x) | g(x;0)],
	\]
	where $\mathcal{H}$ is the p.d.f. set of $h(x)$ (w.r.t. $d\mu$) that satisfies
	\[
	E_h[\xi_i(X)] \triangleq \int h(x) \xi_i(x) d\mu(x) = E[ \xi_i(X) ],
	\]
	for each $\xi_i$ in the model. This is the consequence of so-called ``minimum relative entropy characterization'' of an exponential family'' (see \cite{Csiszar1}). 
	
	Under the formulated exponential family model, the algorithm in the previous section was implemented and the right-hand side of \eqref{criteria_expo} was calculated using the ``data\_est'', the size of which ($n$) equals 799.  Because of the model complexity, the explicit form of $\Psi(\theta)$ could not be obtained; hence, Alternative Steps 2' and 3' were used. The R and RStan program codes are presented in GitHub (\verb|https://github.com/YSheena/P-N_Criteria_Program.git)|. The first-and second-order terms and the estimation risk in the total of \eqref{criteria_expo}  were as follows:
	\\
	First-order term: 2.95e-02, Second-order term: -1.30e-04, Estimation Risk: 2.93e-02
	
	Note that the second-order term contributes little to the estimation risk; thus, the first-order approximation seems sufficient for this model and data. Using \eqref{closeness_error3} as the equation with $\delta =$2.93e-02, we have $\alpha\doteqdot 0.06$. Hence the Bayes error rate between $g(x;\hat\theta)$ and $g(x;\theta_*)$ is higher than 0.44. If we set the threshold as $\alpha=0.05$, then we must trim the model further. For example, if we eliminate one of the $\xi$ elements from the pair with correlation higher than 0.9, then $p$ becomes as small as 37. For this model, the estimation risk is lower than the target value 0.02 as follows:
	\\
	First-order term: 1.60e-02, Second-order term: 2.04e-04, Estimation Risk: 1.62e-02

	As mentioned in the Introduction, the distribution $g(x;\theta_*)$ as the best approximation of $g(x)$ can be used for many purposes. As an example, classification of each wine into a ``quality'' class ($x^{(2)}$) based on its ``chemical substances'' ($x^{(1)}$) is briefly shown. The algorithm is quite simple: wine with $x^{(1)}$ is classified into the class where the function of $x^{(2)}$, $g((x^{(1)},x^{(2)});\hat\theta)$, attains the maximum. This approach is called a ``generative'' classifier in the field of machine learning, in contrast with a ``discriminative'' classifier like a decision tree. (Note that, in the ordinary ``generative'' model approach, after the distributions of $x^{(1)}$ are learned for each $x^{(2)}$ class, the conditional distribution of $x^{(2)}$ given $x^{(1)}$  is calculated using Bayes' theorem. In the above example, the simultaneous distribution of $(x^{(1)}, x^{(2)})$ is directly obtained. This approach is useful when the sample size in some classes is so small that it is difficult to estimate the distribution of the explanatory variables within the class.) 
	
	Cross-validation was performed ten times, with 10\% of the sample ($n=160$) being randomly chosen for the test. For each training set, the model formulation and the estimation are made as above using the whole training set in common.  The accuracy of the overall test data ($n=1600$) was 58\%. For comparison,  similar cross-validation for the naive decision tree (no bagging, no boosting) classifier was also performed, having 63\% accuracy. (The ``C50'' R package was used with the default setting.)  Although the generative model had inferior accuracy (i.e., higher uncertainty in the ``uncertain knowledge'' of Rao's equation), it could provide more reliable ``knowledge of the amount of uncertainty'' in the prediction. The accuracy difference between the training  and test data  illustrates this point.  The model accuracy for the overall training data ($n=14390$) was 56\%; hence, the accuracy difference between the training and test data was 2\% on average, whereas the accuracy of the decision tree model for the overall training data was 91\%, such that the average difference was 28\%. The generative model obtained here had no  ``overfitting'' (rather ``underfitting''); hence, we could present the accuracy safely  before applying it to new data, or even before cross-validation.\\
	%
	%
	%
	%
	%
	-- \textit{Example 6: Abalone Data } --
	
	\sloppy The next example also features a well-known dataset, in this case, for the physical measurement of abalones (U.C.I. Machine Learning Repository, https://archive.ics.uci.edu/ml/datasets/Abalone). This data comprise eight properties (sex, length, diameter, etc.) of 4177 abalones. Here, only two discrete variables were considered: ``sex'' and ``ring,'' where ``sex'' had three values ``Female,'' ``Infant,'' and ``Male''; and ``rings'' had integer values from 1 to 29. The frequency of each classified group by ``sex'' and ``rings'' is  given in Table \ref{table_sex_rings}.
The original frequencies were aggregated at both ends. In the table, if a cell with a star mark is located to the immediate left or right, the number in the cell is aggregated.  For example, of the  female abalones, cells with 24 or more rings were aggregated to frequency 4.  The total number of cells was 63.
	
	\begin{table}
		\caption{Abalones by sex \& rings}
		\label{table_sex_rings}
		\centering
		\begin{tabular}{|l|l|l|l|l|l|l|l|l|l|l|l|l|l|}
			\hline
			& 1 & 2 & 3 & 4 & 5 & 6 & 7 & 8 & 9 & 10 & 11 & 12 & 13 \\ \hline
			F & * & * & * & * & 4 & 16 & 44 & 122 & 238 & 248 & 200 & 128 & 88 \\ \hline
			I & 1 & 1 & 12 & 51 & 100 & 216 & 267 & 274 & 173 & 92 & 62 & 21 & 24 \\ \hline
			M & * & * & 3 & 6 & 11 & 27 & 80 & 172 & 278 & 294 & 225 & 118 & 91 \\ \hline
			& 14 & 15 & 16 & 17 & 18 & 19 & 20 & 21 & 22 & 23 & 24 & $25\leq$ &  \\ \hline
			F & 56 & 41 & 30 & 26 & 19 & 15 & 12 & 7 & 3 & 6 & 4 & * &  \\ \hline
			I & 14 & 10 & 7 & 7 & 5 & 2 & 2 & 1 & * & * & * & * &  \\ \hline
			M & 56 & 52 & 30 & 25 & 18 & 15 & 12 & 6 & 3 & 3 & 3 & * &  \\ \hline
		\end{tabular}
	\end{table}
	A multinomial distribution  over 63 cells was considered; hence, $p=62$. From the sample relative frequency of each cell $\hat{m}_i,$ where $\ i=0,\ldots,62$, 
	\[
	\hat{M} = \sum_{i=0}^{62} {\hat{m}_i}^{-1} = 36128.33,
	\]
	The first-order risk and estimated second-order risk in \eqref{est_risk_discrete} were respectively 
	0.0074 and 1.73e-04. Consequently, for $\alpha=0.05$, the estimation risk in total satisfied
	\[
	C_\alpha = 0.02 >  0.0074  + 1.73/10^4 \doteqdot 0.0076;
	\]
	hence, the model satisfied the $p-n$ criterion. Use of the $n$ formula \eqref{sample_size_formula_discrete} yielded
	\[
	n \geq 1642.
	\]
	However, setting $\alpha=0.01$ increased the required $n$ to 38847, with the actual $n$ being far below this value.
	\section{Total Risk Decomposition and Model Comparison}
	\label{aic_tic}
	This section treats the third problem mentioned in the Introduction, i.e., the distance between $g(x)$ and $g(x;\theta_*)$: $D[g(x) \,|\, g(x;\theta_*)]$. 
	
	For an exponential family model, the following ``generalized Pythagorean theorem'' holds:
	\begin{equation}
		\label{decomp_dive}
		D[g(x) \,|\, g(x;\hat\theta)] =D[g(x) \,|\,  g(x;\theta_*)] + D[g(x;\theta_*) \,|\, g(x;\hat\theta)]
	\end{equation}
	(see, e.g., Lemma 3 of \cite{Barron&Sheu} and Theorem 3.8 of \cite{Amari&Nagaoka}). 
	The convergence of the total distance $D[g(x) \,|\, g(x;\hat\theta)]$ as both $n$ and $p$ goes to infinity has been studied in \cite{Portnoy}, \cite{Barron&Sheu}, and \cite{Stone1}, for the exponential family model.
	
	In general, estimation of $D[g(x) \,|\, g(x;\theta_*)]$ is difficult compared to estimation of $D[g(x;\theta_*) \,|\, g(x;\hat\theta)]$ (or its expectation as treated in the previous sections), because it very subtly depends on $g(x)$. For example,  Theorem 1 of \cite{Barron&Sheu} indicates that the convergence rate in probability as $p$ goes to infinity depends on the square integrability of the higher-order derivative of $\log{g(x)}$ for an exponential family on $[0,1]$ with basis functions such as polynomial, spline, and trigonometric functions. 
	
	Taking the expectation of both sides of \eqref{decomp_dive} with respect to $g(x)$, 
	\begin{equation}
		\label{decomp_total_risk}
		R[g(x) \,|\, g(x;\hat{\theta})] = D [g(x) \,|\, g(x;\theta_*)] + R[g(x;\theta_*) \,|\, g(x;\hat\theta)],
	\end{equation}
	where the $R[ g(x)\, |\, g(x;\hat{\theta})]$ (say ``total risk'') is  defined by 
	\begin{equation}
		R[ g(x)\, |\, g(x;\hat{\theta})] \triangleq E\Bigl[D[g(x) \,|\, g(x;\hat\theta)]\Bigr]. \\
	\end{equation}
	Taking $D[g(x) \,|\, g(x;\theta_*)]$ as the ``approximation risk'' (although it is a constant and need not to be averaged  over $g(x)$), \eqref{decomp_total_risk} states that the ``total risk'' is the sum of the ``approximation  risk'' and the ``estimation risk.''
	If a model satisfies the $p-n$ criterion, the estimation risk is relatively small; hence, the total risk is mostly determined by the approximation risk.
	
	The approximation risk is decomposed as
	\begin{align*}
		D[g(x)\,|\,g(x;\theta_*) ] &= \int g(x) \log (g(x)/g(x;\theta_*) )d\mu, \\
		& = \int g(x) \log{g(x)} d\mu - \int g(x) \log{g(x;\theta_*)} d\mu.
	\end{align*}
	Because the latter part (including minus) is the cross entropy between $g(x)$ and $g(x;\theta)$, and it is determined by the model $\mathcal{M}$, let it be denoted by $Ce(M)$. Naturally, the following estimation is performed:
	\[
	\widehat{Ce(M)} \triangleq -\frac{1}{n} \sum_{t=1}^n \log{g(X_t; \hat{\theta}(X))},
	\]
	based on the sample $X=(X_1,\ldots,X_n)$ from $g(x)$. The bias of this estimator up to $n^{-1}$ order is evaluated as
	\begin{equation}
		\label{bias_Ce}
		E[\widehat{Ce(M)}] - Ce(M) = - \frac{1}{2n} \mathrm{tr} \Bigl(\tilde{G}^{-1}G \Bigr) + o(n^{-1}). 
	\end{equation} 
	(For the proof, see Section \ref{proof_bias_Ce} of the Appendix). 
	
	Using the bias-corrected estimator, the approximation risk is evaluated as 
	\[
	D[g(x)\,|\,g(x;\theta_*) ] \doteqdot \int g(x) \log{g(x)} d\mu -  \frac{1}{n} \sum_{t=1}^n \log{g(X_t; \hat{\theta}(X))} + \frac{1}{2n} \mathrm{tr} \Bigl(\tilde{G}^{-1}G \Bigr).
	\]
	Combining this with the estimation risk \eqref{expan_est_risk_exp}, the estimated total risk to $n^{-1}$-order is 
	\[
	\int g(x) \log{g(x)} d\mu -  \frac{1}{n} \sum_{t=1}^n \log{g(X_t; \hat{\theta}(X))} + \frac{1}{n} \mathrm{tr} \Bigl(\tilde{G}^{-1}G \Bigr).
	\]
	As the first term on the right-hand side is common between the models, the second and third terms, or equivalently those times $2n$, 
	\[
	-2 \sum_{t=1}^n \log{g(X_t; \hat{\theta}(X))} + 2 \mathrm{tr} \Bigl(\tilde{G}^{-1}G \Bigr),
	\]
	can be used as the criteria for the total risk comparison between the two exponential family models.  If $G$ is exchanged with its consistent estimator (e.g., $\hat{\Sigma}$, as in \eqref{est_g}), the TIC is obtained (see \cite{Takeuchi} and \cite{Konishi&Kitagawa}). Needless to say, this criterion is equivalent to the AIC for the case when the model includes $g(x)$; hence, $\tilde{G} = G$. Recall that, to calculate the first term in the TIC, $\Psi(\hat{\theta})$ must be calculated. Because it is difficult to obtain $\Psi(\theta)$ analytically in a complicated model, numerical alternatives are required.
	
	Finally, consider a comparison between the two models. Suppose that, between the two models, 
	\begin{align*}
		&\mathcal{M}_1 \triangleq \{ g_1(x;\theta) | \theta \in \Theta\}, \\
		&\mathcal{M}_2 \triangleq \{ g_2(x;\tau) | \tau \in T \}, 
	\end{align*}
	$\mathcal{M}_1$ is preferable in terms of the information criteria.  This indicates only that $g_1(x;\hat{\theta})$ is likely to be closer to $g(x)$ than $g_2(x;\hat{\tau})$. However, $g_1(x;\theta_*)$ is not guaranteed preferable to $g_2(x;\tau_*)$; hence, $\mathcal{M}_1$ is not confirmed to be a better model.  It is possible that  $g_2(x;\hat{\tau})$ is far from $g_2(x;\tau_*)$, but the approximation risk of $\mathcal{M}_2$ is smaller than that of $\mathcal{M}_1$.
	
	To compare the two models, $\mathcal{M}_1$ and $\mathcal{M}_2$, it is better to first determine whether the present $n$ is sufficiently large to satisfy the $p-n$ criterion for each model. If both models satisfy this criterion, their approximation risks can be compared based on $\widehat{Ce(M)}$ or the bias-corrected term.  (As observed above, the bias equates to the first-order term of the estimation risk; hence, $p-n$ criterion satisfaction indicates that the bias is somewhat negligible.) When $\mathcal{M}_1$ includes $\mathcal{M}_2$, the approximation risk of $\mathcal{M}_1$ is obviously smaller than that of $\mathcal{M}_2$.

\section{Appendix}
\subsection{The proof of \eqref{expan_est_risk}}
\label{Proof_Theo}
We denote an i.i.d. sample from $g(x)$ by $\xx=(X_1,\ldots, X_n)$. 

For $1\leq i \leq p$, let 
$$
\bareu{i}(\xx;\theta)\triangleq \frac{1}{n}\sum_{a=1}^n \frac{\partial }{\partial\theta^i}\log g(X_a;\theta),\qquad \bareo{i}(\xx;\theta)\triangleq \sum_{j=1}^p \tilde{g}^{ij}(\theta) \bareu{j}(\xx;\theta), 
$$
where
\[
\tilde{g}(\theta)^{ij} \triangleq (\tilde{G}(\theta)^{-1})_{ij}.
\]
Since MLE $\hat\theta$ maximizes $\log$-likelihood $\sum_{a=1}^n \log g(x_a;\theta)$
\begin{equation}
\label{bareu=0}
\bareu{i}(\xx; \hat{\theta})=0,\qquad i=1,\ldots,p
\end{equation}
Taylor exqansion of $\bareu{i}(\xx; \hat\theta)$ around $\theta_*$ is given by
\begin{align*}
\bareu{i}(\xx;\hat\theta)&=\bareu{i}(\xx;\theta_*)+\sum_{j}\frac{\partial\bareu{i}(\xx;\theta)}{\partial\theta^j}\bigg|_{\theta=\theta_{*}}  (\hat{\theta}^j-\theta_*^{j})\\
&\quad +\frac{1}{2}\sum_{j,k}\frac{\partial^2 \bareu{i}(\xx;\theta)}{\partial \theta^{j}\partial\theta^{k}}\bigg|_{\theta=\theta_*}(\hat{\theta}^{j}-\theta_*^{j})\hat{\theta}^{j}-\theta_*^{k}) \\
&\quad+ \frac{1}{3!}\sum_{j,k,l}\frac{\partial^3 \bareu{i}(\xx;\theta)}{\partial \theta^{j}\partial\theta^{k}\partial\theta^{l}}\bigg|_{\theta=\theta_{*}}(\hat{\theta}^{j}-\theta_*^{j}) (\hat{\theta}^{k}-\theta_*^{k})(\hat{\theta}^{l}-\theta_*^{l})\\
&\quad + \frac{1}{4!}\sum_{j,k,l,m}\frac{\partial^4 \bareu{i}(\xx;\theta)}{\partial \theta^{j}\partial\theta^{k}\partial\theta^{l}\partial\theta^{m}}\bigg|_{\theta=\tilde{\theta}_{ijklm}}(\hat{\theta}^{j}-\theta_*^{j}) (\hat{\theta}^{k}-\theta_*^{k})(\hat{\theta}^{l}-\theta_*^{l})(\hat{\theta}^{m}-\theta_*^{m}),
\end{align*}
where $\tilde{\theta}_{ijklm}$ is on the segment from $\theta_*$ to $\hat{\theta}$. If we add $\sum_j \tilde{g}_{ij}(\theta_*)(\hat{\theta}^j-\theta_*^{j})$ on the both sides of the above expansion and use \eqref{bareu=0}, then we have
\begin{align*}
\sum_j \tilde{g}_{ij}(\theta_*)(\hat{\theta}^j-\theta_*^{j})&=\bareu{i}(\xx;\theta_*)+\sum_{j}\biggl(\frac{\partial\bareu{i}(\xx;\theta)}{\partial\theta^j}\bigg|_{\theta=\theta_{*}}+\tilde{g}_{ij}(\theta_*)\biggr) (\hat{\theta}^j-\theta_*^j)\\
&\quad +\frac{1}{2}\sum_{j,k}\frac{\partial^2 \bareu{i}(\xx;\theta)}{\partial \theta^{j}\partial\theta^{k}}\bigg|_{\theta=\theta_*}(\hat{\theta}^{j}-\theta_*^{j})\hat{\theta}^{j}-\theta_*^{k})\\
&\quad+ \frac{1}{3!}\sum_{j,k,l}\frac{\partial^3 \bareu{i}(\xx;\theta)}{\partial \theta^{j}\partial\theta^{k}\partial\theta^{l}}\bigg|_{\theta=\theta_{*}}(\hat{\theta}^{j}-\theta_*^{j}) (\hat{\theta}^{k}-\theta_*^{k})(\hat{\theta}^{l}-\theta_*^{l})\\
&\quad + \frac{1}{4!}\sum_{j,k,l,m}\frac{\partial^4 \bareu{i}(\xx;\theta)}{\partial \theta^{j}\partial\theta^{k}\partial\theta^{l}\partial\theta^{m}}\bigg|_{\theta=\tilde{\theta}_{ijklm}}(\hat{\theta}^{j}-\theta_*^{j}) (\hat{\theta}^{k}-\theta_*^{k})(\hat{\theta}^{l}-\theta_*^{l})(\hat{\theta}^{m}-\theta_*^{m}).
\end{align*}
Furthermore if we multiply the both sides with $\tilde{g}^{is}(\theta_*)$ and sum them up over $i$ from 1 to $p$, then  we have
\begin{align}
\barteta{s}&=\bareo{s}+\sum_j \aaa{j}{s}\barteta{j}+\sum_{j,k}\bee{jk}{s}\barteta{j}\barteta{k}+\sum_{j,k}\bbar{jk}{s}\barteta{j}\barteta{k}+
\sum_{j,k,l}\cee{jkl}{s}\barteta{j}\barteta{k}\barteta{l}+\sum_{jkl}\cbar{jkl}{s}\barteta{j}\barteta{k}\barteta{l}\nonumber\\
&\qquad+\sum_{j,k,l,m}\dee{jklm}{s}\barteta{j}\barteta{k}\barteta{l}\barteta{m},
\label{expand_barteta}
\end{align}
where we used the following notations: For $1\leq j, k, l, m, s \leq p,$
\begin{align*}
\bar{\theta}^s &\triangleq \hat{\theta}^s - \theta^s_* \\
\aaa{j}{s}&\triangleq \wai \tilde{g}^{is}(\theta_*) \biggl(\frac{\partial \bareu{i}(\xx;\theta_*)}{\partial \theta^j}+\tilde{g}_{ij}(\theta_*)\biggr)\\
\bee{jk}{s}&\triangleq \frac{1}{2}\wai \tilde{g}^{is}(\theta_*)\biggl(\frac{\partial^2 \bareu{i}(\xx;\theta_*)}{\partial \theta^j \partial\theta^k}-E \biggl[\frac{\partial^2 \bareu{i}(\xx;\theta_*)}{\partial \theta^j \partial\theta^k}\biggr]\biggr)\\
\bbar{jk}{s}&\triangleq \frac{1}{2}\wai  \tilde{g}^{is}(\theta_*) E \biggl[\frac{\partial^2 \bareu{i}(\xx;\theta_*)}{\partial \theta^j \partial\theta^k}\biggr]
=\frac{1}{2}\wai \tilde{g}^{is}(\theta_*)L_{(ijk)},\\
\cee{jkl}{s}&\triangleq \frac{1}{3!}\sum_i \tilde{g}^{is}(\theta_*)\biggl(\frac{\partial^3 \bareu{i}(\xx;\theta_*)}{\partial \theta^j \partial\theta^k \partial\theta^l}-E \biggl[\frac{\partial^3 \bareu{i}(\xx;\theta_*)}{\partial \theta^j \partial\theta^k \partial\theta^l}\biggr]\biggr),\\
\cbar{jkl}{s}&\triangleq \frac{1}{3!}\wai  \tilde{g}^{is}(\theta_*)E \biggl[\frac{\partial^3 \bareu{i}(\xx;\theta_*)}{\partial \theta^j \partial\theta^k \partial\theta^l}\biggr]
=\frac{1}{3!}\wai \tilde{g}^{is}(\theta_*) L_{(ijkl)},\\
\dee{jklm}{s}&\triangleq \frac{1}{4!}\sum_i  \tilde{g}^{is}(\theta_*) \frac{\partial^4 \bareu{i}(\xx;\tilde{\theta}_{ijklm})}{\partial \theta^j \partial\theta^k \partial\theta^l \partial\theta^m}.
\end{align*}
If we insert the right-hand side of \eqref{expand_barteta} into $\barteta{j},\barteta{k},\barteta{l},\barteta{m}$ of itself, we have the following equation. 
\begin{align}
&\barteta{s}\nonumber\\
&=\bareo{s}+\sum_j \aaa{j}{s}
\biggl(
\bareo{j}+\sum_i \aaa{i}{j}\barteta{i}+\sum_{i,k}\bee{ik}{j}\barteta{i}\barteta{k}+\sum_{i,k}\bbar{ik}{j}\barteta{i}\barteta{k}+
\sum_{i,k,l}\cee{ikl}{j}\barteta{i}\barteta{k}\barteta{l}\nonumber\\
&\hspace{30mm}+\sum_{ikl}\cbar{ikl}{j}\barteta{i}\barteta{k}\barteta{l}
+\sum_{i,k,l,m}\dee{iklm}{j}\barteta{i}\barteta{k}\barteta{l}\barteta{m}
\biggr)\nonumber\\
&+\sum_{j,k}\biggl(\bee{jk}{s}+\bbar{jk}{s}\biggr)\nonumber\\
&\hspace{20mm} \times\biggl(
\bareo{j}+\sum_i \aaa{i}{j}\barteta{i}+\sum_{i,k}\bee{ik}{j}\barteta{i}\barteta{k}+\sum_{i,k}\bbar{ik}{j}\barteta{i}\barteta{k}+
\sum_{i,k,l}\cee{ikl}{j}\barteta{i}\barteta{k}\barteta{l}\nonumber\\
&\hspace{30mm}+\sum_{ikl}\cbar{ikl}{j}\barteta{i}\barteta{k}\barteta{l}
+\sum_{i,k,l,m}\dee{iklm}{j}\barteta{i}\barteta{k}\barteta{l}\barteta{m}
\biggr)\nonumber\\
&\hspace{20mm} \times\biggl(
\bareo{k}+\sum_i \aaa{i}{k}\barteta{i}+\sum_{i,j}\bee{ij}{k}\barteta{i}\barteta{j}+\sum_{i,j}\bbar{ij}{k}\barteta{i}\barteta{j}+
\sum_{i,j,l}\cee{ijl}{k}\barteta{i}\barteta{j}\barteta{l}\nonumber\\
&\hspace{30mm}+\sum_{ijl}\cbar{ijl}{k}\barteta{i}\barteta{j}\barteta{l}
+\sum_{i,j,l,m}\dee{ijlm}{k}\barteta{i}\barteta{j}\barteta{l}\barteta{m}
\biggr)\nonumber\\
&+\sum_{j,k,l}\biggl(\cee{jkl}{s}+\cbar{jkl}{s}\biggr)\nonumber\\
&\hspace{20mm} \times\biggl(
\bareo{j}+\sum_i \aaa{i}{j}\barteta{i}+\sum_{i,k}\bee{ik}{j}\barteta{i}\barteta{k}+\sum_{i,k}\bbar{ik}{j}\barteta{i}\barteta{k}+
\sum_{i,k,l}\cee{ikl}{j}\barteta{i}\barteta{k}\barteta{l}\nonumber\\
&\hspace{30mm}+\sum_{ikl}\cbar{ikl}{j}\barteta{i}\barteta{k}\barteta{l}
+\sum_{i,k,l,m}\dee{iklm}{j}\barteta{i}\barteta{k}\barteta{l}\barteta{m}
\biggr)\nonumber\\
&\hspace{20mm} \times\biggl(
\bareo{k}+\sum_i \aaa{i}{k}\barteta{i}+\sum_{i,j}\bee{ij}{k}\barteta{i}\barteta{j}+\sum_{i,j}\bbar{ij}{k}\barteta{i}\barteta{j}+
\sum_{i,j,l}\cee{ijl}{k}\barteta{i}\barteta{j}\barteta{l}\nonumber\\
&\hspace{30mm}+\sum_{i,j,l}\cbar{ijl}{k}\barteta{i}\barteta{j}\barteta{l}+\sum_{i,j,l,m}\dee{ijlm}{k}\barteta{i}\barteta{j}\barteta{l}\barteta{m}
\biggr)\nonumber\\
&\hspace{20mm} \times\biggl(
\bareo{l}+\sum_i \aaa{i}{l}\barteta{i}+\sum_{i,j}\bee{ij}{l}\barteta{i}\barteta{j}+\sum_{i,j}\bbar{ij}{l}\barteta{i}\barteta{j}+
\sum_{i,j,k}\cee{ijk}{l}\barteta{i}\barteta{j}\barteta{k}\nonumber\\
&\hspace{30mm}+\sum_{i,j,k}\cbar{ijk}{l}\barteta{i}\barteta{j}\barteta{k}+\sum_{i,j,k,m}\dee{ijkm}{l}\barteta{i}\barteta{j}\barteta{k}\barteta{m}
\biggr)\nonumber\\
&+\sum_{j,k,l,m}\dee{jklm}{s}\nonumber\\
&\hspace{20mm} \times\biggl(
\bareo{j}+\sum_i \aaa{i}{j}\barteta{i}+\sum_{i,k}\bee{ik}{j}\barteta{i}\barteta{k}+\sum_{i,k}\bbar{ik}{j}\barteta{i}\barteta{k}+
\sum_{i,k,l}\cee{ikl}{j}\barteta{i}\barteta{k}\barteta{l}\nonumber\\
&\hspace{30mm}+\sum_{ikl}\cbar{ikl}{j}\barteta{i}\barteta{k}\barteta{l}
+\sum_{i,k,l,m}\dee{iklm}{j}\barteta{i}\barteta{k}\barteta{l}\barteta{m}
\biggr)\nonumber\\
&\hspace{20mm} \times\biggl(
\bareo{k}+\sum_i \aaa{i}{k}\barteta{i}+\sum_{i,j}\bee{ij}{k}\barteta{i}\barteta{j}+\sum_{i,j}\bbar{ij}{k}\barteta{i}\barteta{j}+
\sum_{i,j,l}\cee{ijl}{k}\barteta{i}\barteta{j}\barteta{l}\nonumber\\
&\hspace{30mm}+\sum_{i,j,l}\cbar{ijl}{k}\barteta{i}\barteta{j}\barteta{l}+\sum_{i,j,l,m}\dee{ijlm}{k}\barteta{i}\barteta{j}\barteta{l}\barteta{m}
\biggr)\nonumber\\
&\hspace{20mm} \times\biggl(
\bareo{l}+\sum_i \aaa{i}{l}\barteta{i}+\sum_{i,j}\bee{ij}{l}\barteta{i}\barteta{j}+\sum_{i,j}\bbar{ij}{l}\barteta{i}\barteta{j}+
\sum_{i,j,k}\cee{ijk}{l}\barteta{i}\barteta{j}\barteta{k}\nonumber\\
&\hspace{30mm}+\sum_{i,j,k}\cbar{ijk}{l}\barteta{i}\barteta{j}\barteta{k}+\sum_{i,j,k,m}\dee{ijkm}{l}\barteta{i}\barteta{j}\barteta{k}\barteta{m}
\biggr)\nonumber\\
&\hspace{20mm} \times\biggl(
\bareo{m}+\sum_i \aaa{i}{m}\barteta{i}+\sum_{i,j}\bee{ij}{m}\barteta{i}\barteta{j}+\sum_{i,j}\bbar{ij}{m}\barteta{i}\barteta{j}+
\sum_{i,j,k}\cee{ijk}{m}\barteta{i}\barteta{j}\barteta{k}\nonumber\\
&\hspace{30mm}+\sum_{i,j,k}\cbar{ijk}{m}\barteta{i}\barteta{j}\barteta{k}+\sum_{i,j,k,l}\dee{ijkl}{m}\barteta{i}\barteta{j}\barteta{k}\barteta{l}
\biggr) \label{expand_barteta_2}
\end{align}
Expanding the equation, counting the order of each term, we can rewrite \eqref{expand_barteta_2} as
\begin{align*}
\barteta{s}&=\bareo{s}+\sum_j \aaa{j}{s}
\biggl(
\bareo{j}+\sum_i \aaa{i}{j}\barteta{i}+\sum_{i,k}\bbar{ik}{j}\barteta{i}\barteta{k}
\biggr)\\
&\quad+\sum_{j,k}\biggl(\bee{jk}{s}+\bbar{jk}{s}\biggr)\\
&\hspace{20mm} \times\biggl(
\bareo{j}+\sum_i \aaa{i}{j}\barteta{i}+\sum_{i,k}\bbar{ik}{j}\barteta{i}\barteta{k}
\biggr)\\
&\hspace{20mm} \times\biggl(
\bareo{k}+\sum_i \aaa{i}{k}\barteta{i}+\sum_{i,j}\bbar{ij}{k}\barteta{i}\barteta{j}
\biggr)\\
&\quad+\sum_{j,k,l}\biggl(\cee{jkl}{s}+\cbar{jkl}{s}\biggr)\bareo{j}\bareo{k}\bareo{l}+Re1,
\end{align*}
where $Re1$  is the polynomial  with respect to the variables $\barteta{s}$, $\bareo{s}$, $\aaa{j}{s}$, $\bee{jk}{s}$, $\cee{jkl}{s}$, $\dee{jklm}{s}$ $(1\leq j, k, l, m, s \leq p)$, and each term is of at least fourth order with respect to $\barteta{s}$, $\bareo{s}$, $\aaa{j}{s}$, $\bee{jk}{s}$, $\cee{jkl}{s}$ $(1\leq j, k, l, s \leq p)$.
If we insert this result into the right-hand side of itself, then we  yield the result.
\begin{align}
\barteta{s}&=\bareo{s}+\sum_j \aaa{j}{s}
\bareo{j}+\sum_{i,j}\aaa{j}{s}\aaa{i}{j}\bareo{i}+\sum_{i,j,k}\aaa{j}{s}\bbar{ik}{j}\bareo{i}\bareo{k}\nonumber\\
&+\sum_{j,k}\biggl(\bee{jk}{s}+\bbar{jk}{s}\biggr)\nonumber\\
&\hspace{20mm} \times\biggl(
\bareo{j}+\sum_i \aaa{i}{j}\bareo{i}+\sum_{i,l}\bbar{il}{j}\bareo{i}\bareo{l}
\biggr)\nonumber\\
&\hspace{20mm} \times\biggl(
\bareo{k}+\sum_i \aaa{i}{k}\bareo{i}+\sum_{i,l}\bbar{il}{k}\bareo{i}\bareo{l}
\biggr)\nonumber\\
&+\sum_{j,k,l}\biggl(\cee{jkl}{s}+\cbar{jkl}{s}\biggr)\bareo{j}\bareo{k}\bareo{l}+Re2\nonumber\\
&=\bareo{s}+\sum_{j}\aaa{j}{s}\bareo{j}+\sum_{j,k}\bbar{jk}{s}\bareo{j}\bareo{k}+\sum_{i,j}\aaa{j}{s}\aaa{i}{j}\bareo{i}+\sum_{i,j,k}\aaa{j}{s}\bbar{ik}{j}\bareo{i}\bareo{k}\nonumber\\
&\ +\sum_{j,k}\bee{jk}{s}\bareo{j}\bareo{k}+2\sum_{i,j,k}\bbar{jk}{s}\aaa{i}{k}\bareo{i}\bareo{j}+2\sum_{i,j,k,l}\bbar{jk}{s}\bbar{il}{k}\bareo{i}\bareo{j}\bareo{l}\nonumber\\
&\ +\sum_{j,k,l}\cbar{jkl}{s}\bareo{j}\bareo{k}\bareo{l}+Re3, \label{expan_barteta}
\end{align}
where $Re2$ and $Re3$ have the same property as $Re1$.

Here we impose the moment conditions as follows. The suitably higher-order joint moments composed of the following variables are bounded with respect to $n$;
\begin{equation}
\label{comp_of_residuals}
\sqrt{n}\,\barteta{s},\quad\sqrt{n}\,\bareo{s},\quad\sqrt{n}\,\aaa{j}{s},\quad \sqrt{n}\,\bee{jk}{s},\quad \sqrt{n}\,\cee{jkl}{s},\quad \dee{jklm}{s},
\end{equation}
where $1\leq j,  k,  l,  m,  s \leq p.$ Then the following result on the expectations
hold. In the process of the calculation, we use Einstein summation notation for brevity. \\
%
%
First we consider $E[\barteta{i}\barteta{j}]$. From \eqref{expan_barteta}, we have
\begin{equation}
\label{barteta^2}
\begin{split}
&\barteta{i}\barteta{j}\\
&=\Bigl(\bareo{i}+\aaa{l}{i}\bareo{l}+\bbar{lm}{i}\bareo{l}\bareo{m}+\aaa{l}{i}\aaa{m}{l}\bareo{m}+\aaa{l}{i}\bbar{ms}{l}\bareo{m}\bareo{s}+\bee{lm}{i}\bareo{l}\bareo{m}+2\bbar{lm}{i}\aaa{s}{m}\bareo{l}\bareo{s}\\
&\qquad+2\bbar{lm}{i}\bbar{st}{m}\bareo{l}\bareo{s}\bareo{t}+\cbar{lmt}{i}\bareo{l}\bareo{m}\bareo{t}+Re3\Bigr)\\
&\times\Bigl(\bareo{j}+\aaa{l}{j}\bareo{l}+\bbar{lm}{j}\bareo{l}\bareo{m}+\aaa{l}{j}\aaa{m}{l}\bareo{m}+\aaa{l}{j}\bbar{ms}{l}\bareo{m}\bareo{s}+\bee{lm}{j}\bareo{l}\bareo{m}+2\bbar{lm}{j}\aaa{s}{m}\bareo{l}\bareo{s}\\
&\qquad+2\bbar{lm}{j}\bbar{st}{m}\bareo{l}\bareo{s}\bareo{t}+\cbar{lmt}{j}\bareo{l}\bareo{m}\bareo{t}+Re3\Bigr)\\
&=\bareo{i}\bareo{j}+\aaa{l}{j}\bareo{i}\bareo{l}+\aaa{l}{i}\bareo{j}\bareo{l}+\bbar{lm}{j}\bareo{i}\bareo{l}\bareo{m}+\bbar{lm}{i}\bareo{j}\bareo{l}\bareo{m}\\
&\quad+\aaa{l}{j}\aaa{m}{l}\bareo{i}\bareo{m}+\aaa{l}{j}\bbar{ms}{l}\bareo{i}\bareo{m}\bareo{s}+\bee{lm}{j}\bareo{i}\bareo{l}\bareo{m}\\
&\quad+2\bbar{lm}{j}\aaa{s}{m}\bareo{i}\bareo{l}\bareo{s}+2\bbar{lm}{j}\bbar{st}{m}\bareo{i}\bareo{l}\bareo{s}\bareo{t}+\cbar{lmt}{j}\bareo{i}\bareo{l}\bareo{m}\bareo{t}\\
&\quad+\aaa{l}{i}\aaa{m}{l}\bareo{j}\bareo{m}+\aaa{l}{i}\bbar{ms}{l}\bareo{j}\bareo{m}\bareo{s}+\bee{lm}{i}\bareo{j}\bareo{l}\bareo{m}\\
&\quad+2\bbar{lm}{i}\aaa{s}{m}\bareo{j}\bareo{l}\bareo{s}+2\bbar{lm}{i}\bbar{st}{m}\bareo{j}\bareo{l}\bareo{s}\bareo{t}+\cbar{lmt}{i}\bareo{j}\bareo{l}\bareo{m}\bareo{t}\\
&\quad+\aaa{l}{i}\aaa{m}{j}\bareo{l}\bareo{m}+\aaa{l}{i}\bbar{st}{j}\bareo{l}\bareo{s}\bareo{t}+\aaa{l}{j}\bbar{st}{i}\bareo{l}\bareo{s}\bareo{t}+\bbar{lm}{i}\bbar{st}{j}\bareo{l}\bareo{m}\bareo{s}\bareo{t}+Re4,
\end{split}
\end{equation}
where $Re4$ is  a polynomial  with respect to the variables $\barteta{s}$, $\bareo{s}$, $\aaa{j}{s}$, $\bee{jk}{s}$, $\cee{jkl}{s}$, $\dee{jklm}{s}$ $(1\leq j, k, l, m, s \leq p)$, and each term is of at least fifth order with respect to $\barteta{s}$, $\bareo{s}$, $\aaa{j}{s}$, $\bee{jk}{s}$, $\cee{jkl}{s}$ $(1\leq j, k, l, s \leq p)$.
We calculate the expectation of each term on the right-hand side of this equation. 
$\tilde{g}^{ij}(\theta_*),g_{ij}(\theta_*) (1\leq i,j \leq p)$ are abbreviated as $\tilde{g}^{ij}, g_{ij}$. Note that
\begin{equation}
\label{E_e_i_zero}
E\biggl[\frac{\partial\hfill }{\partial \theta^i}\log f(X_t;\theta_*)\biggr]=0,\qquad i=1,\ldots,p, \quad t=1,\ldots,n.
\end{equation}
%
%
\begin{align}
&E[\bareo{i}\bareo{j}]\nonumber\\
&=n^{-2}E\biggl[\biggl(\sum_{a=1}^n \tilde{g}^{il}\frac{\partial\hfill}{\partial \theta^l}\log f(X_a;\theta_*)\biggr)\biggl(\sum_{b=1}^n \tilde{g}^{jm}\frac{\partial\hfill}{\partial \theta^m}\log f(X_b;\theta_*)\biggr)
\biggr]\nonumber\\
&=n^{-1}E\biggl[\tilde{g}^{il}\tilde{g}^{jm}\frac{\partial\hfill }{\partial \theta^l}\log f(X;\theta_*)\frac{\partial\hfill }{\partial \theta^m}\log f(X;\theta_*)\biggr]\nonumber\\
&\quad+n^{-2}\sum_{a\ne b}\tilde{g}^{il}E\biggl[\frac{\partial\hfill }{\partial \theta^l}\log f(X_a;\theta_*)\biggr]\tilde{g}^{jm}E\biggl[ \frac{\partial\hfill}{\partial \theta^m}\log f(X_b;\theta_*)\biggr]\nonumber\\
&=n^{-1}\tilde{g}^{il}\tilde{g}^{jm}g_{lm}.\label{bareo_bareo}
\end{align}
\begin{equation}
\label{aaa_bareo_bareo}
\begin{split}
&E[\aaa{l}{j}\bareo{i}\bareo{l}]\\
&=E\biggl[
n^{-1}\tilde{g}^{sj} \sum_{c=1}^n\biggl(\frac{\partial^2\hfill}{\partial \theta^s\partial \theta^l}\log f(X_c;\theta_*)+\tilde{g}_{ls}\biggr)\\
&\qquad\times n^{-2}\tilde{g}^{it}\biggl(\sum_{a=1}^n \frac{\partial\hfill}{\partial\theta^t}\log f(X_a;\theta_*)\biggr)\tilde{g}^{lm}\biggl(\sum_{b=1}^n \frac{\partial\hfill}{\partial\theta^m}\log f(X_b;\theta_*)\biggr)
\biggr]\\
&=n^{-3}\tilde{g}^{sj}\tilde{g}^{it}\tilde{g}^{lm}\\
&\quad\times\biggl\{ n E\biggl[\biggl(\frac{\partial^2\hfill}{\partial \theta^s \partial \theta^l} \log f(X;\theta_*)\biggr)\biggl(\frac{\partial\hfill}{\partial\theta^t}\log f(X;\theta_*)\biggr)\biggl( \frac{\partial\hfill}{\partial\theta^m}\log f(X;\theta_*) \biggr)\biggr]\\
&\qquad\quad+n\tilde{g}_{ls}E\biggl[\biggl(\frac{\partial\hfill}{\partial\theta^t}\log f(X;\theta_*)\biggr)\biggl( \frac{\partial\hfill}{\partial\theta^m}\log f(X;\theta_*) \biggr)\biggr]\\
&\qquad\quad+ \sum_{a\ne c}E\biggl[
\frac{\partial^2\hfill}{\partial \theta^s\partial \theta^l}\log f(X_c;\theta_*)+\tilde{g}_{ls}
\biggr]\\
&\qquad\qquad\times E\biggl[\biggl(\frac{\partial\hfill}{\partial\theta^t}\log f(X_a;\theta_*)\biggr)\biggl( \frac{\partial\hfill}{\partial\theta^m}\log f(X_a;\theta_*) \biggr)\biggr]\\
&\qquad\quad+\sum_{a\ne b} E\biggl[\frac{\partial\hfill}{\partial\theta^m}\log f(X_b;\theta_*)\biggr]\\
&\qquad\qquad\times E\biggl[ \biggl(\frac{\partial^2\hfill}{\partial \theta^s\partial \theta^l}\log f(X_a;\theta_*)+\tilde{g}_{ls}\biggr)\biggl(\frac{\partial\hfill}{\partial\theta^t}\log f(X_a;\theta_*)\biggr)\biggr]\\
&\qquad\quad+\sum_{b\ne a}E\biggl[\frac{\partial\hfill}{\partial\theta^t}\log f(X_a;\theta_*)\biggr]\\
&\qquad\qquad\times E\biggl[ \biggl(\frac{\partial^2\hfill}{\partial \theta^s\partial \theta^l}\log f(X_b;\theta_*)+\tilde{g}_{ls}\biggr)\biggl(\frac{\partial\hfill}{\partial\theta^m}\log f(X_b;\theta_*)\biggr)\biggr]\\\
&\qquad\quad+\sum_{a\ne b, a\ne c, b\ne c}E\biggl[\frac{\partial^2\hfill}{\partial \theta^s\partial \theta^l}\log f(X_c;\theta_*)+\tilde{g}_{ls}\biggr]\\
&\qquad\qquad\times E\biggl[\frac{\partial\hfill}{\partial\theta^t}\log f(X_a;\theta_*)\biggr]E\biggl[\frac{\partial\hfill}{\partial\theta^m}\log f(X_b;\theta_*)\biggr]
\biggr\}\\
&=n^{-2}\tilde{g}^{sj}\tilde{g}^{it}\tilde{g}^{lm}\\
&\quad\times\biggl\{E\biggl[\biggl(\frac{\partial^2\hfill}{\partial \theta^s \partial \theta^l} \log f(X;\theta_*)\biggr)\biggl(\frac{\partial\hfill}{\partial\theta^t}\log f(X;\theta_*)\biggr)\biggl( \frac{\partial\hfill}{\partial\theta^m}\log f(X;\theta_*) \biggr)\biggr]+\tilde{g}_{ls}g_{tm}\biggr\}\\
&=n^{-2}\tilde{g}^{sj}\tilde{g}^{it}\tilde{g}^{lm}(L_{(sl)tm}+\tilde{g}_{ls}g_{tm}).
\end{split}
\end{equation}
\begin{equation}
\label{bbar_bareo^3}
\begin{split}
&E[\bbar{lm}{j}\bareo{i}\bareo{l}\bareo{m}]\\
&=\bbar{lm}{j}E[\bareo{i}\bareo{l}\bareo{m}]\\
&=n^{-3}\bbar{lm}{j}\tilde{g}^{ik}\tilde{g}^{ls}\tilde{g}^{mt}E\biggl[
\biggl(\sum_{a=1}^n\frac{\partial\hfill}{\partial\theta^k}\log f(X_a;\theta_*)\biggr)
\biggl(\sum_{b=1}^n\frac{\partial\hfill}{\partial\theta^s}\log f(X_b;\theta_*)\biggr)
\biggl(\sum_{c=1}^n\frac{\partial\hfill}{\partial\theta^t}\log f(X_c;\theta_*)\biggr)
\biggr]\\
&=n^{-3}\bbar{lm}{j}\tilde{g}^{ik}\tilde{g}^{ls}\tilde{g}^{mt}\\
&\quad\times\biggl\{nE\biggl[\biggl(\frac{\partial\hfill}{\partial \theta^k}\log f(X;\theta_*)\biggr)\biggl(\frac{\partial\hfill}{\partial \theta^s}\log f(X;\theta_*)\biggr)\biggl(\frac{\partial\hfill}{\partial \theta^t}\log f(X;\theta_*)\biggr)\biggr]\\
&\quad\qquad+\sum_{a\ne c}E\biggl[\biggl( \frac{\partial\hfill}{\partial \theta^k}\log f(X_a;\theta_*)\biggr)\biggl(\frac{\partial\hfill}{\partial \theta^s}\log f(X_a;\theta_*)\biggr)\biggr]
E\biggl[\frac{\partial\hfill}{\partial \theta^t}\log f(X_c;\theta_*)\biggr]\\
&\quad\qquad+\sum_{a\ne c}E\biggl[\biggl( \frac{\partial\hfill}{\partial \theta^k}\log f(X_a;\theta_*)\biggr)\biggl(\frac{\partial\hfill}{\partial \theta^t}\log f(X_a;\theta_*)\biggr)\biggr]
E\biggl[\frac{\partial\hfill}{\partial \theta^s}\log f(X_c;\theta_*)\biggr]\\
&\quad\qquad+\sum_{a\ne c}E\biggl[\biggl( \frac{\partial\hfill}{\partial \theta^s}\log f(X_a;\theta_*)\biggr)\biggl(\frac{\partial\hfill}{\partial \theta^t}\log f(X_a;\theta_*)\biggr)\biggr]
E\biggl[\frac{\partial\hfill}{\partial \theta^k}\log f(X_c;\theta_*)\biggr]\\
&\quad\qquad+\sum_{a\ne b, a\ne c, b\ne c} E\biggl[\frac{\partial\hfill}{\partial \theta^k}\log f(X_a;\theta_*)\biggr]E\biggl[\frac{\partial\hfill}{\partial \theta^s}\log f(X_b;\theta_*)\biggr]E\biggl[\frac{\partial\hfill}{\partial \theta^t}\log f(X_c;\theta_*)\biggr]
\biggr\}\\
&=n^{-2}\bbar{lm}{j}\tilde{g}^{ik}\tilde{g}^{ls}\tilde{g}^{mt}\\
&\quad\times E\biggl[\biggl(\frac{\partial\hfill}{\partial \theta^k}\log f(X;\theta_*)\biggr)\biggl(\frac{\partial\hfill}{\partial \theta^s}\log f(X;\theta_*)\biggr)\biggl(\frac{\partial\hfill}{\partial \theta^t}\log f(X;\theta_*)\biggr)\biggr]\\
&=n^{-2}\bbar{lm}{j}\tilde{g}^{ik}\tilde{g}^{ls}\tilde{g}^{mt}L_{kst}.
\end{split}
\end{equation}
\begin{align}
&E[\aaa{l}{j}\aaa{m}{l}\bareo{i}\bareo{m}]\nonumber\\
&=n^{-4}E\biggl[\biggl\{\tilde{g}^{jk}\sum_{a=1}^n \biggl(\frac{\partial^2\hfill}{\partial \theta^k\partial \theta^l}\log f(X_a;\theta_*)+\tilde{g}_{kl}\biggr)\biggr\}\nonumber\\
&\qquad\qquad\times\biggl\{\tilde{g}^{lu} \sum_{b=1}^n \biggl(\frac{\partial^2\hfill}{\partial \theta^u\partial \theta^m}\log f(X_b;\theta_*)+\tilde{g}_{um}\biggr)\biggr\}\nonumber\\
&\qquad\qquad\times\biggl\{\tilde{g}^{is}\sum_{c=1}^n\frac{\partial\hfill}{\partial \theta^s}\log f(X_c;\theta_*)\biggr\}\biggl\{\tilde{g}^{mt}\sum_{d=1}^n\frac{\partial\hfill}{\partial \theta^t}\log f(X_d;\theta_*)\biggr\}\biggr]\nonumber\\
&=n^{-4}\tilde{g}^{jk}\tilde{g}^{lu}\tilde{g}^{is}\tilde{g}^{mt}\nonumber\\
&\quad\times\biggl\{\sum_{a=1}^nE\biggl[ \biggl(\frac{\partial^2\hfill}{\partial \theta^k\partial \theta^l}\log f(X_a;\theta_*)+\tilde{g}_{kl}\biggr)\biggl(\frac{\partial^2\hfill}{\partial \theta^u\partial \theta^m}\log f(X_a;\theta_*)+\tilde{g}_{um}\biggr)\nonumber\\
&\qquad\qquad\qquad\times\biggl(\frac{\partial\hfill}{\partial \theta^s}\log f(X_a;\theta_*)\biggr)\biggl(\frac{\partial\hfill}{\partial \theta^t}\log f(X_a;\theta_*)\biggr)\biggr]\nonumber\\
&\qquad\quad+\sum_{a\ne b} E\biggl[\frac{\partial^2\hfill}{\partial \theta^k\partial \theta^l}\log f(X_a;\theta_*)+\tilde{g}_{kl}\biggr]\nonumber\\
&\qquad\qquad\times
E\biggl[\biggl(\frac{\partial^2\hfill}{\partial \theta^u\partial \theta^m}\log f(X_b;\theta_*)+\tilde{g}_{um}\biggr)\biggl(\frac{\partial\hfill}{\partial \theta^s}\log f(X_b;\theta_*)\biggr)\biggl(\frac{\partial\hfill}{\partial \theta^t}\log f(X_b;\theta_*)\biggr)
\biggr]\nonumber\\
&\qquad\quad+\sum_{a\ne b} E\biggl[\frac{\partial^2\hfill}{\partial \theta^u\partial \theta^m}\log f(X_a;\theta_*)+\tilde{g}_{um}\biggr]\nonumber\\
&\qquad\qquad\times
E\biggl[\biggl(\frac{\partial^2\hfill}{\partial \theta^k\partial \theta^l}\log f(X_b;\theta_*)+\tilde{g}_{kl}\biggr)\biggl(\frac{\partial\hfill}{\partial \theta^s}\log f(X_b;\theta_*)\biggr)\biggl(\frac{\partial\hfill}{\partial \theta^t}\log f(X_b;\theta_*)\biggr)
\biggr]\nonumber\\
&\qquad\quad+ \sum_{a\ne c}E\biggl[\biggl(\frac{\partial^2\hfill}{\partial \theta^k\partial \theta^l}\log f(X_a;\theta_*)+\tilde{g}_{kl}\biggr)\biggl(\frac{\partial^2\hfill}{\partial \theta^u\partial \theta^m}\log f(X_a;\theta_*)+\tilde{g}_{um}\biggr)\nonumber\\
&\qquad\qquad\qquad\times\biggl(\frac{\partial\hfill}{\partial \theta^t}\log f(X_a;\theta_*)\biggr)\biggr] E\biggl[\frac{\partial\hfill}{\partial \theta^s}\log f(X_c;\theta_*)\biggr]\nonumber\\
&\qquad\quad+ \sum_{a\ne c}E\biggl[\biggl(\frac{\partial^2\hfill}{\partial \theta^k\partial \theta^l}\log f(X_a;\theta_*)+\tilde{g}_{kl}\biggr)\biggl(\frac{\partial^2\hfill}{\partial \theta^u\partial \theta^m}\log f(X_a;\theta_*)+\tilde{g}_{um}\biggr)\nonumber\\
&\qquad\qquad\qquad\times\biggl(\frac{\partial\hfill}{\partial \theta^s}\log f(X_a;\theta_*)\biggr)\biggr] E\biggl[\frac{\partial\hfill}{\partial \theta^t}\log f(X_c;\theta_*)\biggr]\nonumber\\
&\qquad\quad+\sum_{a\ne c} E\biggl[\biggl(\frac{\partial^2\hfill}{\partial \theta^k\partial \theta^l}\log f(X_a;\theta_*)+\tilde{g}_{kl}\biggr)\biggl(\frac{\partial^2\hfill}{\partial \theta^u\partial \theta^m}\log f(X_a;\theta_*)+\tilde{g}_{um}\biggr)\biggr]\nonumber\\
&\qquad\qquad\times E\biggl[\biggl(\frac{\partial\hfill}{\partial \theta^s}\log f(X_c;\theta_*)\biggr)\biggl(\frac{\partial\hfill}{\partial \theta^t}\log f(X_c;\theta_*)\biggr)\biggr]\nonumber\\
&\qquad\quad+\sum_{a\ne c} E\biggl[\biggl(\frac{\partial^2\hfill}{\partial \theta^k\partial \theta^l}\log f(X_a;\theta_*)+\tilde{g}_{kl}\biggr)\biggl(\frac{\partial\hfill}{\partial \theta^s}\log f(X_a;\theta_*)\biggr)\biggr]\nonumber\\
&\qquad\qquad\times E\biggl[\biggl(\frac{\partial^2\hfill}{\partial \theta^u\partial \theta^m}\log f(X_c;\theta_*)+\tilde{g}_{um}\biggr)\biggl(\frac{\partial\hfill}{\partial \theta^t}\log f(X_c;\theta_*)\biggr)\biggr]\nonumber\\
&\qquad\quad+\sum_{a\ne c} E\biggl[\biggl(\frac{\partial^2\hfill}{\partial \theta^k\partial \theta^l}\log f(X_a;\theta_*)+\tilde{g}_{kl}\biggr)\biggl(\frac{\partial\hfill}{\partial \theta^t}\log f(X_a;\theta_*)\biggr)\biggr]\nonumber\\
&\qquad\qquad\times E\biggl[\biggl(\frac{\partial^2\hfill}{\partial \theta^u\partial \theta^m}\log f(X_c;\theta_*)+\tilde{g}_{um}\biggr)\biggl(\frac{\partial\hfill}{\partial \theta^s}\log f(X_c;\theta_*)\biggr)\biggr]\nonumber\\
&\qquad\quad+\sum_{a\ne b, a\ne c, b\ne c}E\biggl[\biggl(\frac{\partial^2\hfill}{\partial \theta^k\partial \theta^l}\log f(X_a;\theta_*)+\tilde{g}_{kl}\biggr)\biggl(\frac{\partial^2\hfill}{\partial \theta^u\partial \theta^m}\log f(X_a;\theta_*)+\tilde{g}_{um}\biggr)
\biggr]\nonumber\\
&\qquad\qquad\times E\biggl[\biggl(\frac{\partial\hfill}{\partial \theta^s}\log f(X_b;\theta_*)\biggr)\biggr]E\biggl[\biggl(\frac{\partial\hfill}{\partial \theta^t}\log f(X_c;\theta_*)\biggr)\biggr]\nonumber\\
&\qquad\quad+\sum_{a\ne b, a\ne c, b\ne c}E\biggl[\biggl(\frac{\partial^2\hfill}{\partial \theta^k\partial \theta^l}\log f(X_a;\theta_*)+\tilde{g}_{kl}\biggr)
\biggl(\frac{\partial\hfill}{\partial \theta^s}\log f(X_a;\theta_*)\biggr)
\biggr]\nonumber\\
&\qquad\qquad\times E\biggl[\biggl(\frac{\partial^2\hfill}{\partial \theta^u\partial \theta^m}\log f(X_b\theta)+\tilde{g}_{um}\biggr)\biggr]E\biggl[\biggl(\frac{\partial\hfill}{\partial \theta^t}\log f(X_c;\theta_*)\biggr)\biggr]\nonumber\\
&\qquad\quad+\sum_{a\ne b, a\ne c, b\ne c}E\biggl[\biggl(\frac{\partial^2\hfill}{\partial \theta^k\partial \theta^l}\log f(X_a;\theta_*)+\tilde{g}_{kl}\biggr)
\biggl(\frac{\partial\hfill}{\partial \theta^t}\log f(X_a;\theta_*)\biggr)
\biggr]\nonumber\\
&\qquad\qquad\times E\biggl[\biggl(\frac{\partial^2\hfill}{\partial \theta^u\partial \theta^m}\log f(X_b\theta)+\tilde{g}_{um}\biggr)\biggr]E\biggl[\biggl(\frac{\partial\hfill}{\partial \theta^s}\log f(X_c;\theta_*)\biggr)\biggr]\nonumber\\
&\qquad\quad+\sum_{a\ne b, a\ne c, b\ne c}E\biggl[\biggl(\frac{\partial^2\hfill}{\partial \theta^u\partial \theta^m}\log f(X_a;\theta_*)+\tilde{g}_{um}\biggr)
\biggl(\frac{\partial\hfill}{\partial \theta^s}\log f(X_a;\theta_*)\biggr)
\biggr]\nonumber\\
&\qquad\qquad\times E\biggl[\biggl(\frac{\partial^2\hfill}{\partial \theta^k\partial \theta^l}\log f(X_b\theta)+\tilde{g}_{kl}\biggr)\biggr]E\biggl[\biggl(\frac{\partial\hfill}{\partial \theta^t}\log f(X_c;\theta_*)\biggr)\biggr]\nonumber\\
&\qquad\quad+\sum_{a\ne b, a\ne c, b\ne c}E\biggl[\biggl(\frac{\partial^2\hfill}{\partial \theta^u\partial \theta^m}\log f(X_a;\theta_*)+\tilde{g}_{um}\biggr)
\biggl(\frac{\partial\hfill}{\partial \theta^t}\log f(X_a;\theta_*)\biggr)
\biggr]\nonumber\\
&\qquad\qquad\times E\biggl[\biggl(\frac{\partial^2\hfill}{\partial \theta^k\partial \theta^l}\log f(X_b\theta)+\tilde{g}_{kl}\biggr)\biggr]E\biggl[\biggl(\frac{\partial\hfill}{\partial \theta^s}\log f(X_c;\theta_*)\biggr)\biggr]\nonumber\\
&\qquad\quad+\sum_{a\ne b, a\ne c, b\ne c}E\biggl[
\biggl(\frac{\partial\hfill}{\partial \theta^s}\log f(X_a;\theta_*)\biggr)
\biggl(\frac{\partial\hfill}{\partial \theta^t}\log f(X_a;\theta_*)\biggr)
\biggr]\nonumber\\
&\qquad\qquad\times E\biggl[\biggl(\frac{\partial^2\hfill}{\partial \theta^k\partial \theta^l}\log f(X_b;\theta_*)+\tilde{g}_{kl}\biggr)\biggr]E\biggl[\biggl(\frac{\partial^2\hfill}{\partial \theta^u\partial \theta^m}\log f(X_c;\theta_*)+\tilde{g}_{um}\biggr)\biggr]\nonumber\\
&\qquad\quad+\sum_{a\ne b, a\ne c, a\ne d, b\ne c, b\ne d, c\ne d}E\biggl[
\biggl(\frac{\partial\hfill}{\partial \theta^s}\log f(X_a;\theta_*)\biggr)\biggr]
E\biggl[\biggl(\frac{\partial\hfill}{\partial \theta^t}\log f(X_b;\theta_*)\biggr)
\biggr]\nonumber\\
&\qquad\qquad\times E\biggl[\biggl(\frac{\partial^2\hfill}{\partial \theta^k\partial \theta^l}\log f(X_c;\theta_*)+\tilde{g}_{kl}\biggr)\biggr]E\biggl[\biggl(\frac{\partial^2\hfill}{\partial \theta^u\partial \theta^m}\log f(X_d;\theta_*)+\tilde{g}_{um}\biggr)\biggr]
\biggr\}\nonumber\\
&=n^{-2}\tilde{g}^{jk}\tilde{g}^{lu}\tilde{g}^{is}\tilde{g}^{mt}\nonumber\\
&\qquad\times \biggl\{ E\biggl[\biggl(\frac{\partial^2\hfill}{\partial \theta^k\partial \theta^l}\log f(X;\theta_*)+\tilde{g}_{kl}\biggr)\biggl(\frac{\partial^2\hfill}{\partial \theta^u\partial \theta^m}\log f(X;\theta_*)+\tilde{g}_{um}\biggr)\biggr]\nonumber\\
&\qquad\qquad\times E\biggl[\biggl(\frac{\partial\hfill}{\partial \theta^s}\log f(X;\theta_*)\biggr)\biggl(\frac{\partial\hfill}{\partial \theta^t}\log f(X;\theta_*)\biggr)\biggr]\nonumber\\
&\qquad\quad+ E\biggl[\biggl(\frac{\partial^2\hfill}{\partial \theta^k\partial \theta^l}\log f(X;\theta_*)+\tilde{g}_{kl}\biggr)\biggl(\frac{\partial\hfill}{\partial \theta^s}\log f(X;\theta_*)\biggr)\biggr]\nonumber\\
&\qquad\qquad\times E\biggl[\biggl(\frac{\partial^2\hfill}{\partial \theta^u\partial \theta^m}\log f(X;\theta_*)+\tilde{g}_{um}\biggr)\biggl(\frac{\partial\hfill}{\partial \theta^t}\log f(X;\theta_*)\biggr)\biggr]\nonumber\\
&\qquad\quad+ E\biggl[\biggl(\frac{\partial^2\hfill}{\partial \theta^k\partial \theta^l}\log f(X;\theta_*)+\tilde{g}_{kl}\biggr)\biggl(\frac{\partial\hfill}{\partial \theta^t}\log f(X;\theta_*)\biggr)\biggr]\nonumber\\
&\qquad\qquad\times E\biggl[\biggl(\frac{\partial^2\hfill}{\partial \theta^u\partial \theta^m}\log f(X;\theta_*)+\tilde{g}_{um}\biggr)\biggl(\frac{\partial\hfill}{\partial \theta^s}\log f(X;\theta_*)\biggr)\biggr]\biggr\}+O(n^{-3}). \label{aaa^2_bareo^2}\nonumber\\
&=n^{-2}\tilde{g}^{jk}\tilde{g}^{lu}\tilde{g}^{is}\tilde{g}^{mt} \nonumber\\
&\qquad\times(L_{(kl)(um)}g_{st}-\tilde{g}_{kl}\tilde{g}_{um}g_{st}+L_{(kl)s}L_{(um)t}+L_{(kl)t}L_{(um)s})
+O(n^{-3}).
\end{align}
\begin{equation}
\label{aaa_bbar_bareo^3}
\begin{split}
&E[\aaa{l}{j}\bbar{ms}{l}\bareo{i}\bareo{m}\bareo{s}]\\
&=n^{-4}\bbar{ms}{l}E\biggl[\tilde{g}^{jk}\tilde{g}^{it}\tilde{g}^{mu}\tilde{g}^{sv}\biggl(\sum_{a=1}^n \biggl(\frac{\partial^2\hfill}{\partial\theta^l\partial\theta^k}\log f(X_a;\theta_*)+\tilde{g}_{kl}\biggr)\biggr)\biggl(\sum_{b=1}^n\frac{\partial\hfill}{\partial\theta^t}\log f(X_b;\theta_*)\biggr)\\
&\hspace{25mm}\times\biggl(\sum_{c=1}^n\frac{\partial\hfill}{\partial\theta^u}\log f(X_c;\theta_*)\biggr)\biggl(\sum_{d=1}^n\frac{\partial\hfill}{\partial\theta^v}\log f(X_d;\theta_*)\biggr)\biggr]\\
&=n^{-4}\bbar{ms}{l}\tilde{g}^{jk}\tilde{g}^{it}\tilde{g}^{mu}\tilde{g}^{sv}\\
&\qquad\times\biggl\{\sum_{a\ne b} E\biggl[\biggl(\frac{\partial^2\hfill}{\partial\theta^l\partial\theta^k}\log f(X_a;\theta_*)+\tilde{g}_{kl}\biggr)\biggl(\frac{\partial\hfill}{\partial\theta^t}\log f(X_a;\theta_*)\biggr)\biggr]\\
&\qquad\qquad\qquad\times E\biggl[\biggl(\frac{\partial\hfill}{\partial\theta^u}\log f(X_b;\theta_*)\biggr)\biggl(\frac{\partial\hfill}{\partial\theta^v}\log f(X_b;\theta_*)\biggr)
\biggr]\\
&\qquad\qquad+\sum_{a\ne b} E\biggl[\biggl(\frac{\partial^2\hfill}{\partial\theta^l\partial\theta^k}\log f(X_a;\theta_*)+\tilde{g}_{kl}\biggr)\biggl(\frac{\partial\hfill}{\partial\theta^u}\log f(X_a;\theta_*)\biggr)\biggr]\\
&\qquad\qquad\qquad\times E\biggl[\biggl(\frac{\partial\hfill}{\partial\theta^t}\log f(X_b;\theta_*)\biggr)\biggl(\frac{\partial\hfill}{\partial\theta^v}\log f(X_b;\theta_*)\biggr)
\biggr]\\
&\qquad\qquad+\sum_{a\ne b} E\biggl[\biggl(\frac{\partial^2\hfill}{\partial\theta^l\partial\theta^k}\log f(X_a;\theta_*)+\tilde{g}_{kl}\biggr)\biggl(\frac{\partial\hfill}{\partial\theta^v}\log f(X_a;\theta_*)\biggr)\biggr]\\
&\qquad\qquad\qquad\times E\biggl[\biggl(\frac{\partial\hfill}{\partial\theta^u}\log f(X_b;\theta_*)\biggr)\biggl(\frac{\partial\hfill}{\partial\theta^t}\log f(X_b;\theta_*)\biggr)
\biggr]
\biggr\}+O(n^{-3})\\
&=n^{-2}\bbar{ms}{l}\tilde{g}^{jk}\tilde{g}^{it}\tilde{g}^{mu}\tilde{g}^{sv}\\
&\qquad\times\biggl\{
E\biggl[\biggl(\frac{\partial^2\hfill}{\partial\theta^l\partial\theta^k}\log f(X;\theta_*)+\tilde{g}_{kl}\biggr)\biggl(\frac{\partial\hfill}{\partial\theta^t}\log f(X;\theta_*)\biggr)\biggr]\\
&\qquad\qquad\qquad\times E\biggl[\biggl(\frac{\partial\hfill}{\partial\theta^u}\log f(X;\theta_*)\biggr)\biggl(\frac{\partial\hfill}{\partial\theta^v}\log f(X;\theta_*)\biggr)\biggr]
\biggr\}\\
&\qquad\qquad+E\biggl[\biggl(\frac{\partial^2\hfill}{\partial\theta^l\partial\theta^k}\log f(X;\theta_*)+\tilde{g}_{kl}\biggr)\biggl(\frac{\partial\hfill}{\partial\theta^u}\log f(X;\theta_*)\biggr)\biggr]\\
&\qquad\qquad\qquad\times E\biggl[\biggl(\frac{\partial\hfill}{\partial\theta^t}\log f(X;\theta_*)\biggr)\biggl(\frac{\partial\hfill}{\partial\theta^v}\log f(X;\theta_*)\biggr)
\biggr]\\
&\qquad\qquad+E\biggl[\biggl(\frac{\partial^2\hfill}{\partial\theta^l\partial\theta^k}\log f(X;\theta_*)+\tilde{g}_{kl}\biggr)\biggl(\frac{\partial\hfill}{\partial\theta^v}\log f(X;\theta_*)\biggr)\biggr]\\
&\qquad\qquad\qquad\times E\biggl[\biggl(\frac{\partial\hfill}{\partial\theta^t}\log f(X;\theta_*)\biggr)\biggl(\frac{\partial\hfill}{\partial\theta^u}\log f(X;\theta_*)\biggr)
\biggr]\biggr\}+O(n^{-3})\\
&=n^{-2}\bbar{ms}{l}\tilde{g}^{jk}\tilde{g}^{it}\tilde{g}^{mu}\tilde{g}^{sv}(L_{(lk)t}g_{uv}+L_{(lk)u}g_{tv}+L_{(lk)v}g_{tu}+O(n^{-3}).
\end{split}
\end{equation}
\begin{align}
&E[\bee{lm}{j}\bareo{i}\bareo{l}\bareo{m}]\nonumber\\
&=n^{-4}2^{-1}\tilde{g}^{js}\tilde{g}^{it}\tilde{g}^{lu}\tilde{g}^{mv}\nonumber\\
&\quad\times E\biggl[\sum_{a=1}^n\biggl(\frac{\partial^3\hfill}{\partial\theta^s\partial\theta^l\partial\theta^m} \log f(X_a;\theta_*)-E\biggl[\frac{\partial^3\hfill}{\partial\theta^s\partial\theta^l\partial\theta^m} \log f(X_a;\theta_*)\biggr]\biggr)\nonumber\\
&\qquad\qquad\times \biggl(\sum_{b=1}^n\frac{\partial\hfill}{\partial\theta^t}\log f(X_b;\theta_*)\biggr)\biggl(\sum_{c=1}^n\frac{\partial\hfill}{\partial\theta^u}\log f(X_c;\theta_*)\biggr)\biggl(\sum_{d=1}^n\frac{\partial\hfill}{\partial\theta^v}\log f(X_d;\theta_*)\biggr)
\biggr]\nonumber\\
&=n^{-2}2^{-1}\tilde{g}^{js}\tilde{g}^{it}\tilde{g}^{lu}\tilde{g}^{mv}\nonumber\\
&\quad\times \biggl\{E\biggl[\biggl(\frac{\partial^3\hfill}{\partial\theta^s\partial\theta^l\partial\theta^m} \log f(X;\theta_*)-E\biggl[\frac{\partial^3\hfill}{\partial\theta^s\partial\theta^l\partial\theta^m} \log f(X;\theta_*)\biggr]\biggr) \biggl(\frac{\partial\hfill}{\partial\theta^t}\log f(X;\theta_*)\biggr)\biggr]\nonumber\\
&\qquad\qquad\times E\biggl[\biggl(\frac{\partial\hfill}{\partial\theta^u}\log f(X;\theta_*)\biggr)\biggl(\frac{\partial\hfill}{\partial\theta^v}\log f(X;\theta_*)\biggr)
\biggr]\nonumber\\
&\qquad+E\biggl[\biggl(\frac{\partial^3\hfill}{\partial\theta^s\partial\theta^l\partial\theta^m} \log f(X;\theta_*)-E\biggl[\frac{\partial^3\hfill}{\partial\theta^s\partial\theta^l\partial\theta^m} \log f(X;\theta_*)\biggr]\biggr) \biggl(\frac{\partial\hfill}{\partial\theta^u}\log f(X;\theta_*)\biggr)\biggr]\nonumber\\
&\qquad\qquad\times E\biggl[\biggl(\frac{\partial\hfill}{\partial\theta^t}\log f(X;\theta_*)\biggr)\biggl(\frac{\partial\hfill}{\partial\theta^v}\log f(X;\theta_*)\biggr)
\biggr]\nonumber\\
&\qquad+E\biggl[\biggl(\frac{\partial^3\hfill}{\partial\theta^s\partial\theta^l\partial\theta^m} \log f(X;\theta_*)-E\biggl[\frac{\partial^3\hfill}{\partial\theta^s\partial\theta^l\partial\theta^m} \log f(X;\theta_*)\biggr]\biggr) \biggl(\frac{\partial\hfill}{\partial\theta^v}\log f(X;\theta_*)\biggr)\biggr]\nonumber\\
&\qquad\qquad\times E\biggl[\biggl(\frac{\partial\hfill}{\partial\theta^t}\log f(X;\theta_*)\biggr)\biggl(\frac{\partial\hfill}{\partial\theta^u}\log f(X;\theta_*)\biggr)
\biggr]\biggr\}+O(n^{-3})\nonumber\\
&=n^{-2}2^{-1}\tilde{g}^{js}\tilde{g}^{it}\tilde{g}^{lu}\tilde{g}^{mv}
(L_{(slm)t}g_{uv}+L_{(slm)u}g_{tv}+L_{(slm)v}g_{tu}+O(n^{-3}).\label{bee_bareo^3}
\end{align}
\begin{align}
&E[\bbar{lm}{j}\aaa{s}{m}\bareo{i}\bareo{l}\bareo{s}]\nonumber\\
&=n^{-4}\bbar{lm}{j}\tilde{g}^{mt}\tilde{g}^{iu}\tilde{g}^{lv}\tilde{g}^{sw}\nonumber\\
&\quad\times E\biggl[\biggl(\sum_{a=1}^p\biggl(\frac{\partial^2\hfill}{\partial\theta^t\partial\theta^s}\log f(X_a;\theta_*)+\tilde{g}_{ts}\biggr)\biggr)\biggl(\sum_{b=1}^n\frac{\partial\hfill}{\partial\theta^u}\log f(X_b;\theta_*)\biggr)\nonumber\\
&\qquad\qquad\times\biggl(\sum_{c=1}^n\frac{\partial\hfill}{\partial\theta^v}\log f(X_c;\theta_*)\biggr)\biggl(\sum_{d=1}^n\frac{\partial\hfill}{\partial\theta^w}\log f(X_d;\theta_*)\biggr)
\biggr]\nonumber\\
&=n^{-4}\bbar{lm}{j}\tilde{g}^{mt}\tilde{g}^{iu}\tilde{g}^{lv}\tilde{g}^{sw}\nonumber\\
&\quad\times \biggl\{
\sum_{a\ne b}E\biggl[ \biggl(\frac{\partial^2\hfill}{\partial\theta^t\partial\theta^s}\log f(X_a;\theta_*)+\tilde{g}_{ts}\biggr)\biggl(\frac{\partial\hfill}{\partial\theta^u}\log f(X_a;\theta_*)\biggr)\biggr]\nonumber\\
&\qquad\qquad\times E\biggl[\biggl(\frac{\partial\hfill}{\partial\theta^v}\log f(X_b;\theta_*)\biggr)\biggl(\frac{\partial\hfill}{\partial\theta^w}\log f(X_b;\theta_*)\biggr)\biggr]\nonumber\\
&\qquad\quad+\sum_{a\ne b}E\biggl[ \biggl(\frac{\partial^2\hfill}{\partial\theta^t\partial\theta^s}\log f(X_a;\theta_*)+\tilde{g}_{ts}\biggr)\biggl(\frac{\partial\hfill}{\partial\theta^v}\log f(X_a;\theta_*)\biggr)\biggr]\nonumber\\
&\qquad\qquad\times E\biggl[\biggl(\frac{\partial\hfill}{\partial\theta^u}\log f(X_b;\theta_*)\biggr)\biggl(\frac{\partial\hfill}{\partial\theta^w}\log f(X_b;\theta_*)\biggr)\biggr]\nonumber\\
&\qquad\quad+\sum_{a\ne b}E\biggl[ \biggl(\frac{\partial^2\hfill}{\partial\theta^t\partial\theta^s}\log f(X_a;\theta_*)+\tilde{g}_{ts}\biggr)\biggl(\frac{\partial\hfill}{\partial\theta^w}\log f(X_a;\theta_*)\biggr)\biggr]\nonumber\\
&\qquad\qquad\times E\biggl[\biggl(\frac{\partial\hfill}{\partial\theta^u}\log f(X_b;\theta_*)\biggr)\biggl(\frac{\partial\hfill}{\partial\theta^v}\log f(X_b;\theta_*)\biggr)\biggr]
\biggr\}+O(n^{-3})\nonumber\\
&=n^{-2}\bbar{lm}{j}\tilde{g}^{mt}\tilde{g}^{iu}\tilde{g}^{lv}\tilde{g}^{sw}\nonumber\\
&\quad\times\biggl\{
E\biggl[ \biggl(\frac{\partial^2\hfill}{\partial\theta^t\partial\theta^s}\log f(X;\theta_*)+\tilde{g}_{ts}\biggr)\biggl(\frac{\partial\hfill}{\partial\theta^u}\log f(X;\theta_*)\biggr)\biggr]\nonumber\\
&\qquad\qquad\times E\biggl[\biggl(\frac{\partial\hfill}{\partial\theta^v}\log f(X;\theta_*)\biggr)\biggl(\frac{\partial\hfill}{\partial\theta^w}\log f(X;\theta_*)\biggr)\biggr]\nonumber\\
&\qquad\quad+E\biggl[ \biggl(\frac{\partial^2\hfill}{\partial\theta^t\partial\theta^s}\log f(X;\theta_*)+\tilde{g}_{ts}\biggr)\biggl(\frac{\partial\hfill}{\partial\theta^v}\log f(X;\theta_*)\biggr)\biggr]\nonumber\\
&\qquad\qquad\times E\biggl[\biggl(\frac{\partial\hfill}{\partial\theta^u}\log f(X;\theta_*)\biggr)\biggl(\frac{\partial\hfill}{\partial\theta^w}\log f(X;\theta_*)\biggr)\biggr]\nonumber\\
&\qquad\quad+E\biggl[ \biggl(\frac{\partial^2\hfill}{\partial\theta^t\partial\theta^s}\log f(X;\theta_*)+\tilde{g}_{ts}\biggr)\biggl(\frac{\partial\hfill}{\partial\theta^w}\log f(X;\theta_*)\biggr)\biggr]\nonumber\\
&\qquad\qquad\times E\biggl[\biggl(\frac{\partial\hfill}{\partial\theta^u}\log f(X;\theta_*)\biggr)\biggl(\frac{\partial\hfill}{\partial\theta^v}\log f(X;\theta_*)\biggr)\biggr]
\biggr\}+O(n^{-3})\nonumber\\
&=n^{-2}\bbar{lm}{j}\tilde{g}^{mt}\tilde{g}^{iu}\tilde{g}^{lv}\tilde{g}^{sw}(L_{(ts)u}g_{vw}
+L_{(ts)v}g_{uw}+L_{(ts)w}g_{uv})+O(n^{-3}). \label{bbar_aaa_bareo^3}
\end{align}
\begin{align}
&E[\bbar{lm}{j}\bbar{st}{m}\bareo{i}\bareo{l}\bareo{s}\bareo{t}]\nonumber\\
&=\bbar{lm}{j}\bbar{st}{m}E[\bareo{i}\bareo{l}\bareo{s}\bareo{t}]\nonumber\\
&=n^{-4}\bbar{lm}{j}\bbar{st}{m}\tilde{g}^{ik}\tilde{g}^{lu}\tilde{g}^{sv}\tilde{g}^{tw}\nonumber\\
&\quad\times E\biggl[\biggl(\sum_{a=1}^n\frac{\partial\hfill}{\partial\theta^k}\log f(X_a;\theta_*)\biggr)\biggl(\sum_{b=1}^n\frac{\partial\hfill}{\partial\theta^u}\log f(X_b;\theta_*)\biggr)\nonumber\\
&\qquad\qquad\times\biggl(\sum_{c=1}^n\frac{\partial\hfill}{\partial\theta^v}\log f(X_c;\theta_*)\biggr)\biggl(\sum_{d=1}^n\frac{\partial\hfill}{\partial\theta^w}\log f(X_d;\theta_*)\biggr)\biggr]\nonumber\\
&=n^{-2}\bbar{lm}{j}\bbar{st}{m}\tilde{g}^{ik}\tilde{g}^{lu}\tilde{g}^{sv}\tilde{g}^{tw}\nonumber\\
&\quad\times E\biggl[\biggl(\frac{\partial\hfill}{\partial\theta^k}\log f(X;\theta_*)\biggr)\biggl(\frac{\partial\hfill}{\partial\theta^u}\log f(X;\theta_*)\biggr)\biggr]\nonumber\\
&\qquad\quad\times E\biggl[\biggl(\frac{\partial\hfill}{\partial\theta^v}\log f(X;\theta_*)\biggr)\biggl(\frac{\partial\hfill}{\partial\theta^w}\log f(X;\theta_*)\biggr)\biggr]\nonumber\\
&\quad\quad+ E\biggl[\biggl(\frac{\partial\hfill}{\partial\theta^k}\log f(X;\theta_*)\biggr)\biggl(\frac{\partial\hfill}{\partial\theta^v}\log f(X;\theta_*)\biggr)\biggr]\nonumber\\
&\qquad\quad\times E\biggl[\biggl(\frac{\partial\hfill}{\partial\theta^u}\log f(X;\theta_*)\biggr)\biggl(\frac{\partial\hfill}{\partial\theta^w}\log f(X;\theta_*)\biggr)\biggr]\nonumber\\
&\quad\quad+ E\biggl[\biggl(\frac{\partial\hfill}{\partial\theta^k}\log f(X;\theta_*)\biggr)\biggl(\frac{\partial\hfill}{\partial\theta^w}\log f(X;\theta_*)\biggr)\biggr]\nonumber\\
&\qquad\quad\times E\biggl[\biggl(\frac{\partial\hfill}{\partial\theta^u}\log f(X;\theta_*)\biggr)\biggl(\frac{\partial\hfill}{\partial\theta^v}\log f(X;\theta_*)\biggr)\biggr]\biggr\}+O(n^{-3})\nonumber\\
&=n^{-2}\bbar{lm}{j}\bbar{st}{m}\tilde{g}^{ik}\tilde{g}^{lu}\tilde{g}^{sv}\tilde{g}^{tw}
(g_{ku}g_{vw}+g_{kv}g_{uw}+g_{kw}g_{uv})+O(n^{-3}).\label{bbar^2_bareo^4}
\end{align}
\begin{align}
&E[\cbar{lmt}{j}\bareo{i}\bareo{l}\bareo{m}\bareo{t}]\nonumber\\
&=n^{-4}\cbar{lmt}{j}\tilde{g}^{ik}\tilde{g}^{ls}\tilde{g}^{mu}\tilde{g}^{tv}\nonumber\\
&\quad\times E\biggl[\biggl(\sum_{a=1}^n\frac{\partial\hfill}{\partial\theta^k}\log f(X_a;\theta_*)\biggr)\biggl(\sum_{b=1}^n\frac{\partial\hfill}{\partial\theta^s}\log f(X_b;\theta_*)\biggr)\nonumber\\
&\qquad\qquad\times\biggl(\sum_{c=1}^n\frac{\partial\hfill}{\partial\theta^u}\log f(X_c;\theta_*)\biggr)\biggl(\sum_{d=1}^n\frac{\partial\hfill}{\partial\theta^v}\log f(X_d;\theta_*)\biggr)\biggr]\nonumber\\
&=n^{-2}\cbar{lmt}{j}\tilde{g}^{ik}\tilde{g}^{ls}\tilde{g}^{mu}\tilde{g}^{tv}\nonumber\\
&\quad\times E\biggl[\biggl(\frac{\partial\hfill}{\partial\theta^k}\log f(X;\theta_*)\biggr)\biggl(\frac{\partial\hfill}{\partial\theta^s}\log f(X;\theta_*)\biggr)\biggr]\nonumber\\
&\qquad\quad\times E\biggl[\biggl(\frac{\partial\hfill}{\partial\theta^u}\log f(X;\theta_*)\biggr)\biggl(\frac{\partial\hfill}{\partial\theta^v}\log f(X;\theta_*)\biggr)\biggr]\nonumber\\
&\quad\quad+ E\biggl[\biggl(\frac{\partial\hfill}{\partial\theta^k}\log f(X;\theta_*)\biggr)\biggl(\frac{\partial\hfill}{\partial\theta^u}\log f(X;\theta_*)\biggr)\biggr]\nonumber\\
&\qquad\quad\times E\biggl[\biggl(\frac{\partial\hfill}{\partial\theta^s}\log f(X;\theta_*)\biggr)\biggl(\frac{\partial\hfill}{\partial\theta^v}\log f(X;\theta_*)\biggr)\biggr]\nonumber\\
&\quad\quad+ E\biggl[\biggl(\frac{\partial\hfill}{\partial\theta^k}\log f(X;\theta_*)\biggr)\biggl(\frac{\partial\hfill}{\partial\theta^v}\log f(X;\theta_*)\biggr)\biggr]\nonumber\\
&\qquad\quad\times E\biggl[\biggl(\frac{\partial\hfill}{\partial\theta^s}\log f(X;\theta_*)\biggr)\biggl(\frac{\partial\hfill}{\partial\theta^u}\log f(X;\theta_*)\biggr)\biggr]\biggr\}+O(n^{-3})\nonumber\\
&=n^{-2}\cbar{lmt}{j}\tilde{g}^{ik}\tilde{g}^{ls}\tilde{g}^{mu}\tilde{g}^{tv}
(g_{ks}g_{uv}+g_{ku}g_{sv}+g_{kv}g_{su})+O(n^{-3}).\label{cbar_bareo^4}
\end{align}
\begin{align}
&E[\aaa{l}{i}\aaa{m}{j}\bareo{l}\bareo{m}]\nonumber\\
&=n^{-4}\tilde{g}^{ik}\tilde{g}^{js}\tilde{g}^{lt}\tilde{g}^{mu}\nonumber\\
&\times E\biggl[\biggl(\sum_{a=1}^n\biggl(\frac{\partial^2\hfill}{\partial\theta^k\partial\theta^l}\log f(X_a;\theta_*)+\tilde{g}_{kl}\biggr)\biggr)\biggl(\sum_{b=1}^n\biggl(\frac{\partial^2\hfill}{\partial\theta^s\partial\theta^m}\log f(X_b;\theta_*)+\tilde{g}_{sm}\biggr)\biggr)\nonumber\\
&\qquad\times \biggl(\sum_{c=1}^n\frac{\partial\hfill}{\partial\theta^t}\log f(X_c;\theta_*)\biggr)\biggl(\sum_{d=1}^n\frac{\partial\hfill}{\partial\theta^u}\log f(X_d;\theta_*)\biggr)\biggr]\nonumber\\
&=n^{-2}\tilde{g}^{ik}\tilde{g}^{js}\tilde{g}^{lt}\tilde{g}^{mu}\nonumber\\
&\times \biggl\{
E\biggl[\biggl(\frac{\partial^2\hfill}{\partial\theta^k\partial\theta^l}\log f(X;\theta_*)+\tilde{g}_{kl}\biggr)\biggl(\frac{\partial^2\hfill}{\partial\theta^s\partial\theta^m}\log f(X;\theta_*)+\tilde{g}_{sm}\biggr)\biggr]\nonumber\\
&\qquad\times E\biggl[\biggl(\frac{\partial\hfill}{\partial\theta^t}\log f(X;\theta_*)\biggr)\biggl(\frac{\partial\hfill}{\partial\theta^u}\log f(X;\theta_*)\biggr)\biggr]\nonumber\\
&\quad+E\biggl[ \biggl(\frac{\partial^2\hfill}{\partial\theta^k\partial\theta^l}\log f(X;\theta_*)+\tilde{g}_{kl}\biggr)\biggl(\frac{\partial\hfill}{\partial\theta^t}\log f(X;\theta_*)\biggr)\biggr]\nonumber\\
&\qquad\times E\biggl[\biggl(\frac{\partial^2\hfill}{\partial\theta^s\partial\theta^m}\log f(X;\theta_*)+\tilde{g}_{sm}\biggr)\biggl(\frac{\partial\hfill}{\partial\theta^u}\log f(X;\theta_*)\biggr)\biggr]\nonumber\\
&\quad+E\biggl[ \biggl(\frac{\partial^2\hfill}{\partial\theta^k\partial\theta^l}\log f(X;\theta_*)+\tilde{g}_{kl}\biggr)\biggl(\frac{\partial\hfill}{\partial\theta^u}\log f(X;\theta_*)\biggr)\biggr]\nonumber\\
&\qquad\times E\biggl[\biggl(\frac{\partial^2\hfill}{\partial\theta^s\partial\theta^m}\log f(X;\theta_*)+\tilde{g}_{sm}\biggr)\biggl(\frac{\partial\hfill}{\partial\theta^t}\log f(X;\theta_*)\biggr)\biggr]
\bigg\}+O(n^{-3})\nonumber\\
&=n^{-2}\tilde{g}^{ik}\tilde{g}^{js}\tilde{g}^{lt}\tilde{g}^{mu}
(L_{(kl)(sm)}g_{tu}-\tilde{g}_{kl}\tilde{g}_{sm}g_{tu}+L_{(kl)t}L_{(sm)u}+L_{(kl)u}L_{(sm)t})
+O(n^{-3}).\label{aaa^2_bareo^2_2}
\end{align}
\begin{align}
&E[\aaa{l}{i}\bbar{st}{j}\bareo{l}\bareo{s}\bareo{t}]\nonumber\\
&=n^{-4}\bbar{st}{j}\tilde{g}^{ik}\tilde{g}^{lu}\tilde{g}^{sv}\tilde{g}^{tw}\nonumber\\
&\quad\times E\biggl[ \biggl(\sum_{a=1}^n\biggl(\frac{\partial^2\hfill}{\partial\theta^k\partial\theta^l}\log f(X_a;\theta_*)+\tilde{g}_{kl}\biggr)\biggr)
 \biggl(\sum_{b=1}^n\frac{\partial\hfill}{\partial\theta^u}\log f(X_b;\theta_*)\biggr)\nonumber\\
&\qquad \qquad\times\biggl(\sum_{c=1}^n\frac{\partial\hfill}{\partial\theta^v}\log f(X_c;\theta_*)\biggr)\biggl(\sum_{d=1}^n\frac{\partial\hfill}{\partial\theta^w}\log f(X_d;\theta_*)\biggr)
\biggr]\nonumber\\
&=n^{-2}\bbar{st}{j}\tilde{g}^{ik}\tilde{g}^{lu}\tilde{g}^{sv}\tilde{g}^{tw}\nonumber\\
&\quad \times\biggl\{
 E\biggl[\biggl(\frac{\partial^2\hfill}{\partial\theta^k\partial\theta^l}\log f(X;\theta_*)+\tilde{g}_{kl}\biggr)\biggl(\frac{\partial\hfill}{\partial\theta^u}\log f(X;\theta_*)\biggr)\biggr]\nonumber\\
&\qquad\qquad \times E\biggl[\biggl(\frac{\partial\hfill}{\partial\theta^v}\log f(X;\theta_*)\biggr)\biggl(\frac{\partial\hfill}{\partial\theta^w}\log f(X;\theta_*)\biggr)\biggr]\nonumber\\
&\qquad\quad+ E\biggl[\biggl(\frac{\partial^2\hfill}{\partial\theta^k\partial\theta^l}\log f(X;\theta_*)+\tilde{g}_{kl}\biggr)\biggl(\frac{\partial\hfill}{\partial\theta^v}\log f(X;\theta_*)\biggr)\biggr]\nonumber\\
&\qquad\qquad \times E\biggl[\biggl(\frac{\partial\hfill}{\partial\theta^u}\log f(X;\theta_*)\biggr)\biggl(\frac{\partial\hfill}{\partial\theta^w}\log f(X;\theta_*)\biggr)\biggr]\nonumber\\
&\qquad\quad+ E\biggl[\biggl(\frac{\partial^2\hfill}{\partial\theta^k\partial\theta^l}\log f(X;\theta_*)+\tilde{g}_{kl}\biggr)\biggl(\frac{\partial\hfill}{\partial\theta^w}\log f(X;\theta_*)\biggr)\biggr]\nonumber\\
&\qquad\qquad \times E\biggl[\biggl(\frac{\partial\hfill}{\partial\theta^u}\log f(X;\theta_*)\biggr)\biggl(\frac{\partial\hfill}{\partial\theta^v}\log f(X;\theta_*)\biggr)\biggr]
\biggr\}+O(n^{-3})\nonumber\\
&=n^{-2}\bbar{st}{j}\tilde{g}^{ik}\tilde{g}^{lu}\tilde{g}^{sv}\tilde{g}^{tw}
(L_{(kl)u}g_{vw}+L_{(kl)v}g_{uw}+L_{(kl)w}g_{uv})+O(n^{-3}).\label{aaa_bbar_bareo^3_2}
\end{align}
\begin{align}
&E[\bbar{lm}{i}\bbar{st}{j}\bareo{l}\bareo{m}\bareo{s}\bareo{t}]\nonumber\\
&=n^{-4}\bbar{lm}{i}\bbar{st}{j}\tilde{g}^{lk}\tilde{g}^{mu}\tilde{g}^{sv}\tilde{g}^{tw}\nonumber\\
&\quad\times 
E\biggl[\biggl(\sum_{a=1}^n\frac{\partial\hfill}{\partial\theta^k}\log f(X_a;\theta_*)\biggr)\biggl(\sum_{b=1}^n\frac{\partial\hfill}{\partial\theta^u}\log f(X_b;\theta_*)\biggr)\nonumber\\
&\qquad\qquad\times\biggl(\sum_{c=1}^n\frac{\partial\hfill}{\partial\theta^v}\log f(X_c;\theta_*)\biggr)\biggl(\sum_{d=1}^n\frac{\partial\hfill}{\partial\theta^w}\log f(X_d;\theta_*)\biggr)\biggr]\nonumber\\
&=n^{-2}\bbar{lm}{i}\bbar{st}{j}\tilde{g}^{lk}\tilde{g}^{mu}\tilde{g}^{sv}\tilde{g}^{tw}\nonumber\\
&\quad\times \biggl\{
E\biggl[\biggl(\frac{\partial\hfill}{\partial\theta^k}\log f(X;\theta_*)\biggr)\biggl(\frac{\partial\hfill}{\partial\theta^u}\log f(X;\theta_*)\biggr)\biggr]\nonumber\\
&\qquad\quad\times E\biggl[\biggl(\frac{\partial\hfill}{\partial\theta^v}\log f(X;\theta_*)\biggr)\biggl(\frac{\partial\hfill}{\partial\theta^w}\log f(X;\theta_*)\biggr)\biggr]\nonumber\\
&\quad\quad+ E\biggl[\biggl(\frac{\partial\hfill}{\partial\theta^k}\log f(X;\theta_*)\biggr)\biggl(\frac{\partial\hfill}{\partial\theta^v}\log f(X;\theta_*)\biggr)\biggr]\nonumber\\
&\qquad\quad\times E\biggl[\biggl(\frac{\partial\hfill}{\partial\theta^u}\log f(X;\theta_*)\biggr)\biggl(\frac{\partial\hfill}{\partial\theta^w}\log f(X;\theta_*)\biggr)\biggr]\nonumber\\
&\quad\quad+ E\biggl[\biggl(\frac{\partial\hfill}{\partial\theta^k}\log f(X;\theta_*)\biggr)\biggl(\frac{\partial\hfill}{\partial\theta^w}\log f(X;\theta_*)\biggr)\biggr]\nonumber\\
&\qquad\quad\times E\biggl[\biggl(\frac{\partial\hfill}{\partial\theta^u}\log f(X;\theta_*)\biggr)\biggl(\frac{\partial\hfill}{\partial\theta^v}\log f(X;\theta_*)\biggr)\biggr]\biggr\}+O(n^{-3})\nonumber\\
&=n^{-2}\bbar{lm}{i}\bbar{st}{j}\tilde{g}^{lk}\tilde{g}^{mu}\tilde{g}^{sv}\tilde{g}^{tw}
(g_{ku}g_{vw}+g_{kv}g_{uw}+g_{kw}g_{uv})+O(n^{-3}).\label{bbar^2_bareo^3}
\end{align}
%
%
Consequently the following result holds.
\begin{equation}
\label{e_barteta_i*barteta_j}
\begin{split}
&E[(\hat{\theta}^i-\theta_*^i)(\hat{\theta}^j-\theta_*^j)]\\
&=n^{-1}\tilde{g}^{il}\tilde{g}^{jm}g_{lm}+n^{-2} \\
&\quad\times\Bigl(
\tilde{g}^{sj}\tilde{g}^{it}\tilde{g}^{lm}(L_{(sl)tm}+\tilde{g}_{ls}g_{tm})
+\tilde{g}^{si}\tilde{g}^{jt}\tilde{g}^{lm}(L_{(sl)tm}+\tilde{g}_{ls}g_{tm})\\
&\qquad+\bbar{lm}{j}\tilde{g}^{ik}\tilde{g}^{ls}\tilde{g}^{mt}L_{kst}
+\bbar{lm}{i}\tilde{g}^{jk}\tilde{g}^{ls}\tilde{g}^{mt}L_{kst}\\
&\qquad+\tilde{g}^{jk}\tilde{g}^{lu}\tilde{g}^{is}\tilde{g}^{mt} 
(L_{(kl)(um)}g_{st}-\tilde{g}_{kl}\tilde{g}_{um}g_{st}+L_{(kl)s}L_{(um)t}+L_{(kl)t}L_{(um)s})\\
&\qquad+\tilde{g}^{ik}\tilde{g}^{lu}\tilde{g}^{js}\tilde{g}^{mt} 
(L_{(kl)(um)}g_{st}-\tilde{g}_{kl}\tilde{g}_{um}g_{st}+L_{(kl)s}L_{(um)t}+L_{(kl)t}L_{(um)s})\\
&\qquad+\bbar{ms}{l}\tilde{g}^{jk}\tilde{g}^{it}\tilde{g}^{mu}\tilde{g}^{sv}(L_{(lk)t}g_{uv}+L_{(lk)u}g_{tv}+L_{(lk)v}g_{tu})\\
&\qquad+\bbar{ms}{l}\tilde{g}^{ik}\tilde{g}^{jt}\tilde{g}^{mu}\tilde{g}^{sv}(L_{(lk)t}g_{uv}+L_{(lk)u}g_{tv}+L_{(lk)v}g_{tu})\\
&\qquad+2^{-1}\tilde{g}^{js}\tilde{g}^{it}\tilde{g}^{lu}\tilde{g}^{mv}
(L_{(slm)t}g_{uv}+L_{(slm)u}g_{tv}+L_{(slm)v}g_{tu})\\
&\qquad+2^{-1}\tilde{g}^{is}\tilde{g}^{jt}\tilde{g}^{lu}\tilde{g}^{mv}
(L_{(slm)t}g_{uv}+L_{(slm)u}g_{tv}+L_{(slm)v}g_{tu})\\
&\qquad+2\bbar{lm}{j}\tilde{g}^{mt}\tilde{g}^{iu}\tilde{g}^{lv}\tilde{g}^{sw}(L_{(ts)u}g_{vw}
+L_{(ts)v}g_{uw}+L_{(ts)w}g_{uv})\\
&\qquad+2\bbar{lm}{i}\tilde{g}^{mt}\tilde{g}^{ju}\tilde{g}^{lv}\tilde{g}^{sw}(L_{(ts)u}g_{vw}
+L_{(ts)v}g_{uw}+L_{(ts)w}g_{uv})\\
&\qquad+2\bbar{lm}{j}\bbar{st}{m}\tilde{g}^{ik}\tilde{g}^{lu}\tilde{g}^{sv}\tilde{g}^{tw}
(g_{ku}g_{vw}+g_{kv}g_{uw}+g_{kw}g_{uv})\\
&\qquad+2\bbar{lm}{i}\bbar{st}{m}\tilde{g}^{jk}\tilde{g}^{lu}\tilde{g}^{sv}\tilde{g}^{tw}
(g_{ku}g_{vw}+g_{kv}g_{uw}+g_{kw}g_{uv})\\
&\qquad+\cbar{lmt}{j}\tilde{g}^{ik}\tilde{g}^{ls}\tilde{g}^{mu}\tilde{g}^{tv}
(g_{ks}g_{uv}+g_{ku}g_{sv}+g_{kv}g_{su})\\
&\qquad+\cbar{lmt}{i}\tilde{g}^{jk}\tilde{g}^{ls}\tilde{g}^{mu}\tilde{g}^{tv}
(g_{ks}g_{uv}+g_{ku}g_{sv}+g_{kv}g_{su})\\
&\qquad+\tilde{g}^{ik}\tilde{g}^{js}\tilde{g}^{lt}\tilde{g}^{mu}
(L_{(kl)(sm)}g_{tu}-\tilde{g}_{kl}\tilde{g}_{sm}g_{tu}+L_{(kl)t}L_{(sm)u}+L_{(kl)u}L_{(sm)t})\\
&\qquad+\bbar{st}{j}\tilde{g}^{ik}\tilde{g}^{lu}\tilde{g}^{sv}\tilde{g}^{tw}
(L_{(kl)u}g_{vw}+L_{(kl)v}g_{uw}+L_{(kl)w}g_{uv})\\
&\qquad+\bbar{st}{i}\tilde{g}^{jk}\tilde{g}^{lu}\tilde{g}^{sv}\tilde{g}^{tw}
(L_{(kl)u}g_{vw}+L_{(kl)v}g_{uw}+L_{(kl)w}g_{uv})\\
&\qquad+\bbar{lm}{i}\bbar{st}{j}\tilde{g}^{lk}\tilde{g}^{mu}\tilde{g}^{sv}\tilde{g}^{tw}
(g_{ku}g_{vw}+g_{kv}g_{uw}+g_{kw}g_{uv})
\Bigr)\\
&\quad + O(n^{-3})
\end{split}
\end{equation}
If we substitute $\bbar{ij}{k}$ and $\cbar{ijk}{l}$ for the expression with $\tilde{g}^{ij}$, $L_{(ijk)}$, $L_{(ijkl)}$, then
\begin{equation}
\begin{split}
&E[(\hat{\theta}^i-\theta_*^i)(\hat{\theta}^j-\theta_*^j)]\\
&=n^{-1}\tilde{g}^{il}\tilde{g}^{jm}g_{lm}+n^{-2} \\
&\quad\times\Bigl(
\tilde{g}^{sj}\tilde{g}^{it}\tilde{g}^{lm}(L_{(sl)tm}+\tilde{g}_{ls}g_{tm})
+\tilde{g}^{si}\tilde{g}^{jt}\tilde{g}^{lm}(L_{(sl)tm}+\tilde{g}_{ls}g_{tm})\\
&\qquad+ 2^{-1}\tilde{g}^{uj}\tilde{g}^{ik}\tilde{g}^{ls}\tilde{g}^{mt}L_{kst}L_{(lmu)}
+2^{-1}\tilde{g}^{ui}\tilde{g}^{jk}\tilde{g}^{ls}\tilde{g}^{mt}L_{kst}L_{(lmu)}\\
&\qquad+\tilde{g}^{jk}\tilde{g}^{lu}\tilde{g}^{is}\tilde{g}^{mt} 
(L_{(kl)(um)}g_{st}-\tilde{g}_{kl}\tilde{g}_{um}g_{st}+L_{(kl)s}L_{(um)t}+L_{(kl)t}L_{(um)s})\\
&\qquad+\tilde{g}^{ik}\tilde{g}^{lu}\tilde{g}^{js}\tilde{g}^{mt} 
(L_{(kl)(um)}g_{st}-\tilde{g}_{kl}\tilde{g}_{um}g_{st}+L_{(kl)s}L_{(um)t}+L_{(kl)t}L_{(um)s})\\
&\qquad+2^{-1}\tilde{g}^{jk}\tilde{g}^{it}\tilde{g}^{mu}\tilde{g}^{sv}\tilde{g}^{wl}L_{(msw)}(L_{(lk)t}g_{uv}+L_{(lk)u}g_{tv}+L_{(lk)v}g_{tu})\\
&\qquad+2^{-1}\tilde{g}^{ik}\tilde{g}^{jt}\tilde{g}^{mu}\tilde{g}^{sv}\tilde{g}^{wl}L_{(msw)}(L_{(lk)t}g_{uv}+L_{(lk)u}g_{tv}+L_{(lk)v}g_{tu})\\
&\qquad+2^{-1}\tilde{g}^{js}\tilde{g}^{it}\tilde{g}^{lu}\tilde{g}^{mv}
(L_{(slm)t}g_{uv}+L_{(slm)u}g_{tv}+L_{(slm)v}g_{tu})\\
&\qquad+2^{-1}\tilde{g}^{is}\tilde{g}^{jt}\tilde{g}^{lu}\tilde{g}^{mv}
(L_{(slm)t}g_{uv}+L_{(slm)u}g_{tv}+L_{(slm)v}g_{tu})\\
&\qquad+\tilde{g}^{mt}\tilde{g}^{iu}\tilde{g}^{lv}\tilde{g}^{sw}\tilde{g}^{kj}L_{(lmk)}(L_{(ts)u}g_{vw}
+L_{(ts)v}g_{uw}+L_{(ts)w}g_{uv})\\
&\qquad+\tilde{g}^{mt}\tilde{g}^{ju}\tilde{g}^{lv}\tilde{g}^{sw}\tilde{g}^{ki}L_{(lmk)}(L_{(ts)u}g_{vw}
+L_{(ts)v}g_{uw}+L_{(ts)w}g_{uv})\\
&\qquad+2^{-1}\tilde{g}^{ik}\tilde{g}^{lu}\tilde{g}^{sv}\tilde{g}^{tw}\tilde{g}^{oj}\tilde{g}^{hm}L_{(lmo)}L_{(sth)}
(g_{ku}g_{vw}+g_{kv}g_{uw}+g_{kw}g_{uv})\\
&\qquad+2^{-1}\tilde{g}^{jk}\tilde{g}^{lu}\tilde{g}^{sv}\tilde{g}^{tw}\tilde{g}^{oi}\tilde{g}^{hm}L_{(lmo)}L_{(sth)}
(g_{ku}g_{vw}+g_{kv}g_{uw}+g_{kw}g_{uv})\\
&\qquad+6^{-1}\tilde{g}^{ik}\tilde{g}^{ls}\tilde{g}^{mu}\tilde{g}^{tv}\tilde{g}^{wj}L_{(lmtw)}
(g_{ks}g_{uv}+g_{ku}g_{sv}+g_{kv}g_{su})\\
&\qquad+6^{-1}\tilde{g}^{jk}\tilde{g}^{ls}\tilde{g}^{mu}\tilde{g}^{tv}\tilde{g}^{wi}L_{(lmtw)}
(g_{ks}g_{uv}+g_{ku}g_{sv}+g_{kv}g_{su})\\
&\qquad+\tilde{g}^{ik}\tilde{g}^{js}\tilde{g}^{lt}\tilde{g}^{mu}
(L_{(kl)(sm)}g_{tu}-\tilde{g}_{kl}\tilde{g}_{sm}g_{tu}+L_{(kl)t}L_{(sm)u}+L_{(kl)u}L_{(sm)t})\\
&\qquad+2^{-1}\tilde{g}^{ik}\tilde{g}^{lu}\tilde{g}^{sv}\tilde{g}^{tw}\tilde{g}^{jm}L_{(stm)}
(L_{(kl)u}g_{vw}+L_{(kl)v}g_{uw}+L_{(kl)w}g_{uv})\\
&\qquad+2^{-1}\tilde{g}^{jk}\tilde{g}^{lu}\tilde{g}^{sv}\tilde{g}^{tw}\tilde{g}^{im}L_{(stm)}
(L_{(kl)u}g_{vw}+L_{(kl)v}g_{uw}+L_{(kl)w}g_{uv})\\
&\qquad+4^{-1}\tilde{g}^{lk}\tilde{g}^{mu}\tilde{g}^{sv}\tilde{g}^{tw}\tilde{g}^{io}\tilde{g}^{jh}L_{(lmo)}L_{(sth)}
(g_{ku}g_{vw}+g_{kv}g_{uw}+g_{kw}g_{uv})
\Bigr)\\
&\quad + O(n^{-3})
\end{split}
\end{equation}
%
%
%
%
%
Now we consider $E[\barteta{i}\barteta{j}\barteta{k}]$. From \eqref{expan_barteta}, we have
\begin{align}
\barteta{i}\barteta{j}\barteta{k}&=\bareo{i}\bareo{j}\bareo{k}+\aaa{s}{i}\bareo{s}\bareo{j}\bareo{k}+\aaa{s}{j}\bareo{s}\bareo{i}\bareo{k}+\aaa{s}{k}\bareo{s}\bareo{i}\bareo{j}\nonumber\\
&\quad+\bbar{st}{i}\bareo{s}\bareo{t}\bareo{j}\bareo{k}+\bbar{st}{j}\bareo{s}\bareo{t}\bareo{i}\bareo{k}+\bbar{st}{k}\bareo{s}\bareo{t}\bareo{i}\bareo{j}+Re4, \label{barteta^3}
\end{align}
where $Re4$ is similarly defined as before. We evaluate the expectation of each term.
%
%
\begin{align}
&E[\bareo{i}\bareo{j}\bareo{k}]\nonumber\\
&=n^{-3}E \biggl[\biggl(\tilde{g}^{is}\sum_{a=1}^n\biggl(\frac{\partial \hfill}{\partial \theta^s} \log f(X_a;\theta_*)\biggr)\biggr)\biggl(\tilde{g}^{jt}\sum_{b=1}^n\biggl(\frac{\partial \hfill}{\partial \theta^t} \log f(X_b;\theta_*)\biggr)\biggr)\nonumber\\
&\qquad\times\biggl(\tilde{g}^{ku}\sum_{c=1}^n\biggl(\frac{\partial \hfill}{\partial \theta^u} \log f(X_c;\theta_*)\biggr)\biggr)
\biggr]\nonumber\\
&=n^{-3}\sum_{a=1}^n \tilde{g}^{is}\tilde{g}^{jt}\tilde{g}^{ku} E\biggl[\biggl(\frac{\partial \hfill}{\partial \theta^s} \log f(X_a;\theta_*)\biggr)\biggl(\frac{\partial \hfill}{\partial \theta^t} \log f(X_a;\theta_*)\biggr)\biggl(\frac{\partial \hfill}{\partial \theta^u} \log f(X_a;\theta_*)\biggr)
\biggr]\nonumber\\
&\quad+n^{-3}\sum_{a\ne b} \tilde{g}^{is}\tilde{g}^{jt}\tilde{g}^{ku}\nonumber\\
&\quad\qquad\times \biggl\{ E\biggl[\biggl(\frac{\partial \hfill}{\partial \theta^s} \log f(X_a;\theta_*)\biggr)\biggl(\frac{\partial \hfill}{\partial \theta^t} \log f(X_a;\theta_*)\biggr)\biggr]E\biggl[\frac{\partial \hfill}{\partial \theta^u} \log f(X_b;\theta_*)\biggr]\nonumber\\
&\quad\qquad\qquad+E\biggl[\biggl(\frac{\partial \hfill}{\partial \theta^s} \log f(X_a;\theta_*)\biggr)\biggl(\frac{\partial \hfill}{\partial \theta^u} \log f(X_a;\theta_*)\biggr)\biggr]E\biggl[\frac{\partial \hfill}{\partial \theta^t} \log f(X_b;\theta_*) \biggr]\nonumber\\
&\quad\qquad\qquad+E\biggl[\biggl(\frac{\partial \hfill}{\partial \theta^t} \log f(X_a;\theta_*)\biggr)\biggl(\frac{\partial \hfill}{\partial \theta^u} \log f(X_a;\theta_*)\biggr)\biggr]E\biggl[\frac{\partial \hfill}{\partial \theta^s} \log f(X_b;\theta_*) \biggr]\biggr\}\nonumber\\
&\quad+n^{-3}\sum_{a\ne b, a\ne c, b \ne c} \tilde{g}^{is}\tilde{g}^{jt}\tilde{g}^{ku}\nonumber\\
&\qquad\qquad\times E\biggl[\frac{\partial \hfill}{\partial \theta^s} \log f(X_a;\theta_*)\biggr]E\biggl[\frac{\partial \hfill}{\partial \theta^t} \log f(X_b;\theta_*)\biggr]E\biggl[\frac{\partial \hfill}{\partial \theta^u} \log f(X_c;\theta_*)\biggr]\nonumber\\
&=n^{-2}\tilde{g}^{is}\tilde{g}^{jt}\tilde{g}^{ku}E\biggl[\biggl(\frac{\partial \hfill}{\partial \theta^s} \log f(X;\theta_*)\biggr)\biggl(\frac{\partial \hfill}{\partial \theta^t} \log f(X;\theta_*)\biggr)\biggl(\frac{\partial \hfill}{\partial \theta^u} \log f(X;\theta_*)\biggr)\biggr]+O(n^{-3})\nonumber\\
&=n^{-2}\tilde{g}^{is}\tilde{g}^{jt}\tilde{g}^{ku}L_{stu}+O(n^{-3}).\label{bareo^4}
\end{align}
\begin{align}
&E[\aaa{s}{i}\bareo{s}\bareo{j}\bareo{k}]\nonumber\\
&=n^{-4}\tilde{g}^{it}\tilde{g}^{su}\tilde{g}^{jv}\tilde{g}^{kw}\sum_{1\leq a, b, c, d \leq n} E\biggl[
\biggl(\frac{\partial^2\hfill}{\partial\theta^s\partial\theta^t}\log f(X_a;\theta_*)+\tilde{g}_{st}\biggr)\nonumber\\
&\quad\times\biggl(\frac{\partial \hfill}{\partial \theta^u} \log f(X_b;\theta_*)\biggr)\biggl(\frac{\partial \hfill}{\partial \theta^v} \log f(X_c;\theta_*)\biggr)\biggl(\frac{\partial \hfill}{\partial \theta^w} \log f(X_d;\theta_*)\biggr)
\biggr]\nonumber\\
&=n^{-4}\tilde{g}^{it}\tilde{g}^{su}\tilde{g}^{jv}\tilde{g}^{kw}\nonumber\\
&\quad\times \sum_{a\ne b}\biggl\{E\biggl[\biggl(\frac{\partial^2\hfill}{\partial\theta^s\partial\theta^t}\log f(X_a;\theta_*)+\tilde{g}_{st}\biggr)\biggl(\frac{\partial \hfill}{\partial \theta^u} \log f(X_a;\theta_*)\biggr)\biggr]\nonumber\\
&\qquad\qquad\times E\biggl[\biggl(\frac{\partial \hfill}{\partial \theta^v} \log f(X_b;\theta_*)\biggr)\biggl(\frac{\partial \hfill}{\partial \theta^w} \log f(X_b;\theta_*)\biggr)\biggr]\nonumber\\
&\qquad\qquad+E\biggl[\biggl(\frac{\partial^2\hfill}{\partial\theta^s\partial\theta^t}\log f(X_a;\theta_*)+\tilde{g}_{st}\biggr)\biggl(\frac{\partial \hfill}{\partial \theta^v} \log f(X_a;\theta_*)\biggr)\biggr]\nonumber\\
&\qquad\qquad\times E\biggl[\biggl(\frac{\partial \hfill}{\partial \theta^u} \log f(X_b;\theta_*)\biggr)\biggl(\frac{\partial \hfill}{\partial \theta^w} \log f(X_b;\theta_*)\biggr)\biggr]\nonumber\\
&\qquad\qquad+E\biggl[\biggl(\frac{\partial^2\hfill}{\partial\theta^s\partial\theta^t}\log f(X_a;\theta_*)+\tilde{g}_{st}\biggr)\biggl(\frac{\partial \hfill}{\partial \theta^w} \log f(X_a;\theta_*)\biggr)\biggr]\nonumber\\
&\qquad\qquad\times E\biggl[\biggl(\frac{\partial \hfill}{\partial \theta^u} \log f(X_b;\theta_*)\biggr)\biggl(\frac{\partial \hfill}{\partial \theta^v} \log f(X_b;\theta_*)\biggr)\biggr]\biggr\}+O(n^{-3})\nonumber\\
&=n^{-2}\tilde{g}^{it}\tilde{g}^{su}\tilde{g}^{jv}\tilde{g}^{kw}
(L_{(st)u}g_{vw}+L_{(st)v}g_{uw}+L_{(st)w}g_{uv})+O(n^{-3}). \label{aaa_bareo^3}
\end{align}
\begin{align}
&E[\bbar{st}{i}\bareo{s}\bareo{t}\bareo{j}\bareo{k}]\nonumber\\
&=n^{-4}\bbar{st}{i}\tilde{g}^{su}\tilde{g}^{tv}\tilde{g}^{jw}\tilde{g}^{km}\nonumber\\
&\quad\times\sum_{1 \leq a,b,c,d \leq n}E\biggl[\biggl(\frac{\partial \hfill}{\partial \theta^u} \log f(X_a;\theta_*)\biggr)\biggl(\frac{\partial \hfill}{\partial \theta^v} \log f(X_b;\theta_*)\biggr)\nonumber\\
&\qquad\qquad\times\biggl(\frac{\partial \hfill}{\partial \theta^w} \log f(X_c;\theta_*)\biggr)\biggl(\frac{\partial \hfill}{\partial \theta^m} \log f(X_d;\theta_*)\biggr)
\biggr]\nonumber\\
&=n^{-2}\bbar{st}{i}\tilde{g}^{su}\tilde{g}^{tv}\tilde{g}^{jw}\tilde{g}^{km}\nonumber\\
&\quad\times\biggl\{
E\biggl[ \biggl(\frac{\partial \hfill}{\partial \theta^u} \log f(X;\theta_*)\biggr)\biggl(\frac{\partial \hfill}{\partial \theta^v} \log f(X;\theta_*)\biggr)\biggr]\nonumber\\
&\qquad\qquad\times E\biggl[ \biggl(\frac{\partial \hfill}{\partial \theta^w} \log f(X;\theta_*)\biggr)\biggl(\frac{\partial \hfill}{\partial \theta^m} \log f(X;\theta_*)\biggr)\biggr]\nonumber\\
&\quad\qquad+E\biggl[ \biggl(\frac{\partial \hfill}{\partial \theta^u} \log f(X;\theta_*)\biggr)\biggl(\frac{\partial \hfill}{\partial \theta^w} \log f(X;\theta_*)\biggr)\biggr]\nonumber\\
&\qquad\qquad\times E\biggl[ \biggl(\frac{\partial \hfill}{\partial \theta^v} \log f(X;\theta_*)\biggr)\biggl(\frac{\partial \hfill}{\partial \theta^m} \log f(X;\theta_*)\biggr)\biggr]\nonumber\\
&\quad\qquad+E\biggl[ \biggl(\frac{\partial \hfill}{\partial \theta^u} \log f(X;\theta_*)\biggr)\biggl(\frac{\partial \hfill}{\partial \theta^m} \log f(X;\theta_*)\biggr)\biggr]\nonumber\\
&\qquad\qquad\times E\biggl[ \biggl(\frac{\partial \hfill}{\partial \theta^v} \log f(X;\theta_*)\biggr)\biggl(\frac{\partial \hfill}{\partial \theta^w} \log f(X;\theta_*)\biggr)\biggr]\bigg\}+O(n^{-3})\nonumber\\
&=n^{-2}\bbar{st}{i} \tilde{g}^{su}\tilde{g}^{tv}\tilde{g}^{jw}\tilde{g}^{km}(g_{uv}g_{wm}+g_{uw}g_{vm}+g_{um}g_{vw})+O(n^{-3}).\label{bbar_bareo^4}
\end{align}
%
%
Therefore 
\begin{equation}
\label{e_barteta_i*barteta_j*barteta_k}
\begin{split}
&E[(\hat{\theta}^i-\theta_*^i)(\hat{\theta}^j-\theta_*^j)(\hat{\theta}^k-\theta_*^k)]\\
&=n^{-2}\Bigl(\tilde{g}^{is}\tilde{g}^{jt}\tilde{g}^{ku}L_{stu} \\
&\quad\qquad+\tilde{g}^{it}\tilde{g}^{su}\tilde{g}^{jv}\tilde{g}^{kw}(L_{(st)u}g_{vw}+L_{(st)v}g_{uw}+L_{(st)w}g_{uv})\\
&\quad\qquad+\tilde{g}^{jt}\tilde{g}^{su}\tilde{g}^{iv}\tilde{g}^{kw}(L_{(st)u}g_{vw}+L_{(st)v}g_{uw}+L_{(st)w}g_{uv})\\
&\quad\qquad+\tilde{g}^{kt}\tilde{g}^{su}\tilde{g}^{iv}\tilde{g}^{jw}(L_{(st)u}g_{vw}+L_{(st)v}g_{uw}+L_{(st)w}g_{uv})\\
&\quad\qquad+\bbar{st}{i} \tilde{g}^{su}\tilde{g}^{tv}\tilde{g}^{jw}\tilde{g}^{km}(g_{uv}g_{wm}+g_{uw}g_{vm}+g_{um}g_{vw})\\
&\quad\qquad+\bbar{st}{j} \tilde{g}^{su}\tilde{g}^{tv}\tilde{g}^{iw}\tilde{g}^{km}(g_{uv}g_{wm}+g_{uw}g_{vm}+g_{um}g_{vw})\\
&\quad\qquad+\bbar{st}{k} \tilde{g}^{su}\tilde{g}^{tv}\tilde{g}^{iw}\tilde{g}^{jm}(g_{uv}g_{wm}+g_{uw}g_{vm}+g_{um}g_{vw})\Bigr)\\
&\qquad+O(n^{-3})\\
&=n^{-2}\Bigl(\tilde{g}^{is}\tilde{g}^{jt}\tilde{g}^{ku}L_{stu} \\
&\quad\qquad+\tilde{g}^{it}\tilde{g}^{su}\tilde{g}^{jv}\tilde{g}^{kw}(L_{(st)u}g_{vw}+L_{(st)v}g_{uw}+L_{(st)w}g_{uv})\\
&\quad\qquad+\tilde{g}^{jt}\tilde{g}^{su}\tilde{g}^{iv}\tilde{g}^{kw}(L_{(st)u}g_{vw}+L_{(st)v}g_{uw}+L_{(st)w}g_{uv})\\
&\quad\qquad+\tilde{g}^{kt}\tilde{g}^{su}\tilde{g}^{iv}\tilde{g}^{jw}(L_{(st)u}g_{vw}+L_{(st)v}g_{uw}+L_{(st)w}g_{uv})\\
&\quad\qquad+2^{-1} \tilde{g}^{su}\tilde{g}^{tv}\tilde{g}^{jw}\tilde{g}^{km}\tilde{g}^{il}L_{(stl)}(g_{uv}g_{wm}+g_{uw}g_{vm}+g_{um}g_{vw})\\
&\quad\qquad+2^{-1} \tilde{g}^{su}\tilde{g}^{tv}\tilde{g}^{iw}\tilde{g}^{km}\tilde{g}^{jl}L_{(stl)}(g_{uv}g_{wm}+g_{uw}g_{vm}+g_{um}g_{vw})\\
&\quad\qquad+2^{-1}\tilde{g}^{su}\tilde{g}^{tv}\tilde{g}^{iw}\tilde{g}^{jm}\tilde{g}^{kl}L_{(stl)}(g_{uv}g_{wm}+g_{uw}g_{vm}+g_{um}g_{vw})\Bigr)\\
&\qquad+O(n^{-3}).
\end{split}
\end{equation}
%
%
%
%
%
Finally we calculate $E[\barteta{i}\barteta{j}\barteta{k}\barteta{l}]$. Notice
\begin{equation}
\barteta{i}\barteta{j}\barteta{k}\barteta{l}=\bareo{i}\bareo{j}\bareo{k}\bareo{l}+Re4,
\end{equation}
where $Re4$ is defined as before. Therefore
\begin{align}
&E[\barteta{i}\barteta{j}\barteta{k}\barteta{l}]\\
&=n^{-4}E \biggl[\biggl(\tilde{g}^{is}\sum_{a=1}^n\biggl(\frac{\partial \hfill}{\partial \theta^s} \log f(X_a;\theta_*)\biggr)\biggr)\biggl(\tilde{g}^{jt}\sum_{b=1}^n\biggl(\frac{\partial \hfill}{\partial \theta^t} \log f(X_b;\theta_*)\biggr)\biggr)\nonumber\\
&\quad\times\biggl(\tilde{g}^{ku}\sum_{c=1}^n\biggl(\frac{\partial \hfill}{\partial \theta^u} \log f(X_c;\theta_*)\biggr)\biggr)\biggl(\tilde{g}^{lv}\sum_{d=1}^n\biggl(\frac{\partial \hfill}{\partial \theta^v} \log f(X_d;\theta_*)\biggr)\biggr)
\biggr]\nonumber\\
&=n^{-4}\tilde{g}^{is}\tilde{g}^{jt}\tilde{g}^{ku}\tilde{g}^{lv}\nonumber\\
&\qquad\times \sum_{a\ne b} \biggl\{ E\biggl[\frac{\partial \hfill}{\partial \theta^s} \log f(X_a;\theta_*)\frac{\partial \hfill}{\partial \theta^t} \log f(X_a;\theta_*)\biggr]E\biggl[\frac{\partial \hfill}{\partial \theta^u} \log f(X_b;\theta_*)
\frac{\partial \hfill}{\partial \theta^v} \log f(X_b;\theta_*)\biggr]\nonumber\\
&\quad\qquad\qquad+E\biggl[\frac{\partial \hfill}{\partial \theta^s} \log f(X_a;\theta_*)\frac{\partial \hfill}{\partial \theta^u} \log f(X_a;\theta_*)\biggr]E\biggl[\frac{\partial \hfill}{\partial \theta^t} \log f(X_b;\theta_*)
\frac{\partial \hfill}{\partial \theta^v} \log f(X_b;\theta_*) \biggr]\nonumber\\
&\quad\qquad\qquad+E\biggl[\frac{\partial \hfill}{\partial \theta^s} \log f(X_a;\theta_*)\frac{\partial \hfill}{\partial \theta^v} \log f(X_a;\theta_*)\biggr]E\biggl[\frac{\partial \hfill}{\partial \theta^t} \log f(X_b;\theta_*)
\frac{\partial \hfill}{\partial \theta^u} \log f(X_b;\theta_*) \biggr]\biggr\}\nonumber\\
&\qquad +O(n^{-3})\nonumber\\
&=n^{-2}\tilde{g}^{is}\tilde{g}^{jt}\tilde{g}^{ku}\tilde{g}^{lv}
(g_{st}g_{uv}+g_{su}g_{tv}+g_{sv}g_{tu})+O(n^{-3}).
\end{align}
Therefore we have
\begin{align}
&E[(\hat{\theta}^i-\theta_*^i)(\hat{\theta}^j-\theta_*^j)(\hat{\theta}^k-\theta_*^k)(\hat{\theta}^l-\theta_*^l)]\nonumber\\
&=n^{-2}\tilde{g}^{is}\tilde{g}^{jt}\tilde{g}^{ku}\tilde{g}^{lv}(g_{st}g_{uv}+g_{su}g_{tv}+g_{sv}g_{tu})+O(n^{-3}).
\end{align}
	\subsection{Proof of \eqref{expan_est_risk_exp}}
	\label{Proof_Coro}
	Note that, for $1\leq i, j ,k,l\leq p$,
	\begin{align*}
		g_{ij}^*(\theta_*) &= - \int  g(x;\theta_*)\Bigl( \frac{\partial^2 \hfill }{\partial \theta^i\partial \theta^j}\log g(x;\theta) \Big|_{\theta=\theta_*}\Bigr)d\mu \\
		&= \frac{\partial^2 \Psi(\theta) }{\partial \theta^i\partial \theta^j} \Big|_{\theta=\theta_*}=
		\tilde{g}_{ij}(\theta_*),
	\end{align*}
	and
	\[
	L_{(ij)kl} = -\tilde{g}_{ij}g_{kl},\qquad L_{(ij)(kl)}=\tilde{g}_{ij}\tilde{g}_{kl},\qquad L_{(ij)k}=0,\qquad L_{(ijk)l}=0.
	\]
	Combining these relations with \eqref{expan_est_risk} gives the following:
	\begin{equation}
		\label{asy_risk_exp_pre}
		\begin{split}
			&R[g(x;\theta_*)\,|\,g(x;\hat{\theta}) ] \\
			&= \frac{1}{2n} \mathrm{tr} \Bigl(\tilde{G}^{-1}G \Bigr) \\
			&\quad+\frac{1}{24n^2}\Bigl[\Bigl(12\tilde{g}^{uk}\tilde{g}^{ls}\tilde{g}^{mt}L_{kst}L_{(lmu)}\\
			&\qquad\qquad+12\tilde{g}^{ko}\tilde{g}^{lu}\tilde{g}^{sv}\tilde{g}^{tw}\tilde{g}^{hm}L_{(lmo)}L_{(sth)}
			(g_{ku}g_{vw}+g_{kv}g_{uw}+g_{kw}g_{uv})\\
			&\qquad\qquad+4\tilde{g}^{kw}\tilde{g}^{ls}\tilde{g}^{mu}\tilde{g}^{tv}L_{(lmtw)}
			(g_{ks}g_{uv}+g_{ku}g_{sv}+g_{kv}g_{su})\\
			&\qquad\qquad+3\tilde{g}^{lk}\tilde{g}^{mu}\tilde{g}^{sv}\tilde{g}^{tw}\tilde{g}^{oh}L_{(lmo)}L_{(sth)}
			(g_{ku}g_{vw}+g_{kv}g_{uw}+g_{kw}g_{uv})
			\Bigr)\\
			&\qquad\qquad-\tau_{ijk}\Bigl(4\tilde{g}^{is}\tilde{g}^{jt}\tilde{g}^{ku}L_{stu} \\
			&\qquad\qquad+6\tilde{g}^{su}\tilde{g}^{tv}\tilde{g}^{jw}\tilde{g}^{km}\tilde{g}^{il}L_{(stl)}(g_{uv}g_{wm}+g_{uw}g_{vm}+g_{um}g_{vw})\Bigr)\\
			&\qquad\qquad-\tau_{ijkl}\tilde{g}^{is}\tilde{g}^{jt}\tilde{g}^{ku}\tilde{g}^{lv}(g_{st}g_{uv}+g_{su}g_{tv}+g_{sv}g_{tu})
			\Bigr]\\
			&\quad + O(n^{-3}).\\
		\end{split}
	\end{equation}
	As
	\[
	\tau_{ijk} = -\frac{\partial^3 \Psi}{\partial \theta^i \partial \theta^j \partial \theta^k}
	= L_{(ijk)},\qquad 
	\tau_{ijkl} = -\frac{\partial^4 \Psi}{\partial \theta^i \partial \theta^j \partial \theta^k 
		\partial \theta^l} = L_{(ijkl)},
	\]
	for $1\leq i, j,k,l \leq p$, 
	\begin{equation}
		\begin{split}
			&R[g(x;\theta_*)\,|\,g(x;\hat{\theta}) ] \\
			&= \frac{1}{2n} \mathrm{tr} \Bigl(\tilde{G}^{-1}G \Bigr) \\
			&\quad+\frac{1}{24n^2}\Bigl[8\tilde{g}^{uk}\tilde{g}^{ls}\tilde{g}^{mt}L_{kst}L_{(lmu)}\\
			&\qquad\qquad+9\tilde{g}^{ko}\tilde{g}^{lu}\tilde{g}^{sv}\tilde{g}^{tw}\tilde{g}^{hm}L_{(lmo)}L_{(sth)}
			(g_{ku}g_{vw}+g_{kv}g_{uw}+g_{kw}g_{uv})\\
			&\qquad\qquad+3\tilde{g}^{kw}\tilde{g}^{ls}\tilde{g}^{mu}\tilde{g}^{tv}L_{(lmtw)}
			(g_{ks}g_{uv}+g_{ku}g_{sv}+g_{kv}g_{su})\Bigr]\\
			&\quad + O(n^{-3}).\\
		\end{split}
	\end{equation}
	Substituting the cumulant expression \eqref{cumu_exp} gives
	\begin{equation}
		\begin{split}
			&R[g(x;\theta_*)\,|\,g(x;\hat{\theta}) ] \\
			&= \frac{1}{2n} \mathrm{tr} \Bigl(\tilde{G}^{-1}G \Bigr) \\
			&\quad+\frac{1}{24n^2}\Bigl[-8\tilde{g}^{uk}\tilde{g}^{ls}\tilde{g}^{mt}\kappa_{kst}\kappa^*_{lmu}\\
			&\qquad\qquad+9\tilde{g}^{ko}\tilde{g}^{lu}\tilde{g}^{sv}\tilde{g}^{tw}\tilde{g}^{hm}\kappa^*_{lmo}\kappa^*_{sth}
			(g_{ku}g_{vw}+g_{kv}g_{uw}+g_{kw}g_{uv})\\
			&\qquad\qquad-3\tilde{g}^{kw}\tilde{g}^{ls}\tilde{g}^{mu}\tilde{g}^{tv}\kappa^*_{lmtw}
			(g_{ks}g_{uv}+g_{ku}g_{sv}+g_{kv}g_{su})\Bigr]\\
			&\quad + O(n^{-3}).
		\end{split}
	\end{equation}
	\subsection{Derivation of \eqref{Risk_norm_case1}, \eqref{Risk_norm_case2}, \eqref{Risk_norm_case3}}
	\label{nomral_risk_deriv}
	Information projection is given by the solution 
	\begin{align*}
		E\Bigl[\frac{\partial \hfill}{\partial \theta^i} \log g(Y,X;\theta) \Bigr]
		&= 
		\begin{cases}
			-2^{-1}E[\epsilon^2]+2^{-1}(\theta^0)^{-1}=0 ,\quad  \text{ if $i=0$},\\
			\theta^0E[\epsilon X_i] = 0, \quad \text{ if $i=1,\ldots,p$}.
		\end{cases}
	\end{align*}
	In other words, $g(y,x ; \theta_*)$ is given by $\theta_*=(\theta_*^0,\ldots,\theta_*^p)$, which satisfies
	\[
	(\theta_*^0)^{-1} = E[\epsilon^2(Y,X;\theta_*)],\ \quad E[X_i \epsilon(Y,X;\theta_*)] = 0,\ i=1,\ldots p.
	\]
	Note that
	\begin{align*}
		\tilde{g}_{00}(\theta_*) &= \frac{1}{2(\theta^0_*)^2}, \\
		\tilde{g}_{0i}(\theta_*) &=-E[X_i \epsilon(Y,X;\theta_*)]=0,\quad i=1,\ldots,p \\
		\tilde{g}_{ij}(\theta_*) &= \theta^0_* s_{ij},\quad s_{ij} \triangleq E[X_i X_j],\quad i,j=1,\ldots,p\\
		g^*_{00} (\theta_*)&= \frac{1}{2(\theta^0_*)^2} \\
		g^*_{0i} (\theta_*)&=-E_{\theta_*}[X_i \epsilon(Y,X;\theta_*)]=0,\quad i=1,\ldots,p \\
		g^*_{ij}(\theta_*) &= \theta_*^0  E_{\theta_*}[X_i X_j]=\theta_*^0 E[X_i X_j]=\theta_*^0 s_{ij},\quad i,j=1,\ldots,p.\\
	\end{align*}
	Hence, $\tilde{G}=G^*$ and 
	\[
	\tilde{G}^{-1} = 
	\begin{pmatrix}
		2(\theta^0_*)^2 &  0 \\
		0 & (\theta^0_*)^{-1} S^{-1}
	\end{pmatrix}
	, \qquad S=(s_{ij}).
	\]
	Further,  we have
	\begin{align*}
		g_{00}(\theta_*) &= \frac{1}{4}E\Bigl[\Bigl(\epsilon^2(Y,X;\theta_*)-(\theta^0_*)^{-1}\Bigr)^2\Bigr]\\
		&=\frac{1}{4}E[\epsilon^4(Y,X;\theta_*)]-\frac{1}{2}(\theta^0_*)^{-1}E[\epsilon^2(Y,X;\theta_*)]+\frac{1}{4}(\theta^0_*)^{-2}\\
		&=\frac{1}{4}\Bigl(E[\epsilon^4(Y,X;\theta_*)]-E^2[\epsilon^2(Y,X;\theta_*)]\Bigr)\\
		g_{ij}(\theta_*) & =(\theta^0_*)^2 t_{ij},\qquad t_{ij} \triangleq E[X_i X_j \epsilon^2(Y,X;\theta_*)],\quad i,j=1,\ldots,p.\\
	\end{align*}
	Consequently, for Case1,
	\begin{align*}
		&R[g(x;\theta_*)\,|\,g(x;\hat{\theta}) ] \\
		&= \frac{1}{2n} \mathrm{tr} \Bigl(\tilde{G}^{-1}G\tilde{G}^{-1}G^* \Bigr)  + o(n^{-1})\\
		&= \frac{1}{2n} \mathrm{tr} \Bigl(\tilde{G}^{-1}G \Bigr)  + o(n^{-1})\\
		&= \frac{1}{2n} \Bigl( \theta^0_* \mathrm{tr} \Bigl(S^{-1} T\Bigr) 
		+\frac{(\theta^0_*)^2}{2}\Bigl(E[\epsilon^4(Y,X;\theta_*)]-E^2[\epsilon^2(Y,X;\theta_*)]\Bigr)\Bigr)+ o(n^{-1})\\
		&=\frac{1}{2n} \Bigl(  \mathrm{tr} \Bigl(S^{-1} T\Bigr) /E[\epsilon^2(Y,X;\theta_*)]
		+\frac{1}{2}\Bigl(E[\epsilon^4(Y,X;\theta_*)]/E^2[\epsilon^2(Y,X;\theta_*)]-1\Bigr)\Bigr)+ o(n^{-1}),
	\end{align*}
	where $(T)_{ij} = t_{ij}$.  
	
	For Case 2, we observe that
	\[
	t_{ij} = E[X_i X_j] E[\epsilon^2(Y,X;\theta_*)], \quad i,j =1,\ldots,p;
	\]
	hence,
	\[
	T = E[\epsilon^2(Y,X;\theta_*)] S,
	\]
	and 
	\[
	R[g(x;\theta_*)\,|\,g(x;\hat{\theta}) ]= \frac{1}{2n} \Bigl(p +\frac{1}{2}\Bigl(E[\epsilon^4(Y,X;\theta_*)]/E^2[\epsilon^2(Y,X;\theta_*)]-1\Bigr)\Bigr)+ o(n^{-1}).
	\]
	For Case 3, as 
	\[
	E[\epsilon^4(Y,X;\theta_*)]/E^2[\epsilon^2(Y,X;\theta_*)]=3,
	\]
	we have
	\[
	R[g(x;\theta_*)\,|\,g(x;\hat{\theta}) ]= \frac{p+1}{2n} + o(n^{-1}).
	\]
	\subsection{Proof of \eqref{rel_error_D}}
	\label{dive_error} 
	A suitably fine partition $S_i,\ i=1,\ldots,m$ of the domain of $d\mu$ and the associated step functions $\tilde{g}_j(x) = \sum_{i=1}^m c_{ji} I(x \in S_i),\ j=1,2$ are taken such that  the two integrations 
	\begin{align}
		&Er[g_1(x)\, | \, g_2(x)] = \frac{1}{2}\int \min\Bigl(g_1(x), g_2(x)\Bigr) d\mu = \frac{1}{2}\int g_1(x) \min\Bigl(1, g_2(x)/g_1(x)\Bigr) d\mu, \\
		&D[g_1(x) \,| \, g_2(x)] = \int g_1(x) \log \Bigl( g_1(x)/g_2(x) \Bigr) d\mu,
	\end{align}
	are sufficiently well approximated by
	\begin{align}
		&\frac{1}{2}\int \tilde{g}_1(x) \min\Bigl(1, \tilde{g}_2(x)/\tilde{g}_1(x)\Bigr) d\mu =\frac{1}{2}\sum_{i=1}^m  \min (1, c_{2i}/c_{1i}) \int_{S_i} c_{1i} d\mu, \label{appro_error}\\
		&\int \tilde{g}_1(x) \log \Bigl( \tilde{g}_1(x)/\tilde{g}_2(x) \Bigr) d\mu = \sum_{i=1}^m \log (c_{1i}/c_{2i} ) \int_{S_i} c_{1i} d\mu, \label{appro_dive}
	\end{align}  
	respectively. Furthermore, we can choose the partition such that 
	\[
	\int_{S_i} c_{1i} d\mu = 1/m, \quad i=1,\ldots,m.
	\]
	Then, \eqref{appro_error} and \eqref{appro_dive} equal 
	\begin{align*}
		&\frac{1}{2m}\sum_{i=1}^m \min (1, \Delta_i) \ (\triangleq t(\Delta_1,\ldots,\Delta_m) ) \\
		&\frac{1}{m}\sum_{i=1}^m -\log{\Delta_i},
	\end{align*} 
	where $\Delta_i \triangleq c_{2i}/c_{1i},\ i=1,\ldots,m$. Suppose that $D[g(X;\theta_1) \,| \, g(x;\theta_2)] < \delta$. Then, we can suppose 
\begin{equation}
		\label{diver_cond}
		f(\Delta_1,\ldots, \Delta_m) \triangleq \frac{1}{m} \sum_{i=1}^m \log \Delta_i \geq -\delta.
	\end{equation}
	The lower bound of $t(\Delta)$ is searched for, under the condition of \eqref{diver_cond}. Let 
	\begin{equation}
		\label{delta_sum}
		\tilde{m} \triangleq \sum_{i=1}^m \Delta_i,\qquad \tilde{1} \triangleq  \frac{\tilde{m}}{m} .
	\end{equation}
	Note that, as the partition $S_i,\ i=1,\ldots,m$ becomes finer,
	\[
	\sum_{i=1}^m \int_{S_i}  c_{2i} d\mu =\sum_{i=1}^m \Delta_i/m = \tilde{1} \to \int g_2(x) d\mu  =1.
	\]

	Without loss of generality, the following can be assumed:   
	\[
	\Delta_1 \geq  \cdots \geq \Delta_s > 1 > \Delta_{s+1} \geq \cdots \geq \Delta_m > 0,\quad \exists s (\geq 1). 
	\]
	Let $u=m-s$ and 
	\[
	\Delta^+ \triangleq  \frac{1}{s} \sum_{i=1}^s \Delta_i,\qquad \Delta^- \triangleq \frac{1}{u} \sum_{i=s+1}^m \Delta_{i}.
	\]
	Note that 
	\[
	t(\underbrace{\Delta^+, \cdots, \Delta^+}_s, \underbrace{\Delta^-,\cdots, \Delta^-}_u) = t(\Delta_1,\ldots, \Delta_m) \\
	\]
	and, because of the concavity of $\log(\Delta_i)$, 
	\[
	f(\underbrace{\Delta^+, \cdots, \Delta^+}_s, \underbrace{\Delta^-,\cdots, \Delta^-}_u) \geq f(\Delta_1,\ldots, \Delta_m) \geq -\delta.
	\]
	Therefore, in search of the lower bound of $t(\Delta)$, we must only consider the case where
	\begin{equation}
		\label{delta_uniform}
		\begin{split}
			&\Delta_1 = \Delta_2 = \cdots = \Delta_s  = \Delta^+ > 1, \\
			&0 < \Delta_{s+1} = \Delta_{s+2} = \cdots = \Delta_m =\Delta ^- < 1,
		\end{split}
	\end{equation}

	Under condition \eqref{delta_uniform}, the relations \eqref{diver_cond} and \eqref{delta_sum}  are 
	\begin{align*}
		&\frac{1}{m} (s\log{\Delta^+}+ u\log{\Delta^{-}}) \geq -\delta, \\
		& s\Delta^+ + u \Delta^{-} = \tilde{m},\\
	\end{align*}
	respectively, or equivalently,
	\begin{align}
		&  x \log{\Delta^+} + (1-x) \log{\Delta^-} \geq -\delta \label{diver_cond2},\\
		& x\Delta^+ +(1-x) \Delta^- =  \tilde{1}, \label{delta_sum2}
	\end{align}
	where 
	\begin{equation}
		\label{domain_x}
		0< x=s/m <1.
	\end{equation}
	Substituting the relation from \eqref{delta_sum2}, i.e.,
	\[
	\Delta^- = \frac{\tilde{1}-x\Delta^+}{1-x}
	\]
	into $\Delta^- > 0$ and  \eqref{diver_cond2} gives 
	\begin{align}
		& 1 < \Delta^+ < \frac{\tilde{1}}{x} \label{x_Delta}\\
		&h(x;\Delta^+) \triangleq x \log{\Delta^+} +(1-x) \log{\Bigl(\frac{\tilde{1}-x\Delta^+}{1-x}\Bigr)} \geq -\delta. \label{diver_cond3}
	\end{align}
	Furthermore, under condition \eqref{delta_uniform},
	\begin{align*}
		\frac{1}{2m}\sum_{i=1}^m \min (1, \Delta_i)& = t(\Delta_1,\ldots,\Delta_m) \\
		& = \frac{1}{2m}( s + u\Delta^- ) \\
		& =\frac{1}{2}( x + (1-x) \Delta^- ) \\
		&= \frac{1}{2} \bigl(\tilde{1}+x(1-\Delta^+)\bigr) \ \bigl(\triangleq t(x;\Delta^+) \bigr)
	\end{align*}
	
	Consider the minimization of $t(x;\Delta^+)$ under conditions \eqref{domain_x}, \eqref{x_Delta}, and \eqref{diver_cond3}.
	As
	\begin{align*}
		\frac{d }{dx} h(x;\Delta^+) = h'(x;\Delta^+) &= \log{\Delta^+} -\log{\Bigl(\frac{\tilde{1}-x\Delta^+}{1-x}\Bigr)}+(1-x)\Bigl\{\frac{-\Delta^+}{\tilde{1}-x\Delta^+}+\frac{1}{1-x}\Bigr\}\\
		&=\log{\Bigl(\frac{\Delta^+(1-x)}{\tilde{1}-x\Delta^+}\Bigr)} + \frac{\tilde{1}-\Delta^+}{\tilde{1}-x\Delta^+}\\
		&\leq \frac{\Delta^{+}-\tilde{1}}{\tilde{1}-x\Delta^{+}}+\frac{\tilde{1}-\Delta^{+}}{\tilde{1}-x\Delta^{+}} = 0 \qquad \text{($\because \log(1+x) \leq x$)},
	\end{align*}
	the minimum value of $t(x;\Delta^+)$(say, $t^*$) is attained when \eqref{diver_cond3} holds with the equation. Let $x^*$ denote the point that attains $t^*$; then, 
	\begin{equation}
		\label{del_by_t_x}
		\Delta^+ = (\tilde{1}-2t^*)/x^* +  1.
	\end{equation}
	Inserting \eqref{del_by_t_x} into the left-hand side of \eqref{diver_cond3} and equating it with $-\delta$ gives 
	\[
	x^*\log{\Bigl(\frac{\tilde{1}-2t^*}{x^*} + 1\Bigr)}+(1-x^*) \log{\Bigl(\frac{2t^*-1}{1-x^*}+1\Bigr)} = -\delta,
	\]
	while, from \eqref{domain_x}, \eqref{x_Delta}, and \eqref{del_by_t_x}, 
	\[
	0 < x^* < 2t^* < \tilde{1}.
	\]
	Let us define the region $\tilde{A}(\delta)$  by
	\[
	\tilde{A}(\delta) \triangleq \Bigl\{(x^*,t^*)\, \Big| \, x^*\log{\Bigl(\frac{\tilde{1}-2t^*}{x^*} + 1\Bigr)}+(1-x^*) \log{\Bigl(\frac{2t^*-1}{1-x^*}+1\Bigr)} = -\delta,\quad 0 < x^* < 2t^* < \tilde{1}.\Bigr\}
	\]
	Then, 
	\[
	\frac{1}{2m}\sum_{i=1}^m \min (1, \Delta_i) = t(x;\Delta^+) \geq  \min{\{t^* \,|\, (x^*,t^*) \in \tilde{A}(\delta)\}}.
	\]
	Taking the limit operation for both sides as the partition becomes finer gives  the result.
	\subsection{Proof of \eqref{bias_Ce}}
	\label{proof_bias_Ce} 
	For simplicity, the notation $\doteqdot$ is used when the terms $o(n^{-1)}$ or $o_ p(n^{-1})$ are ignored.
	The expansion of $\log{g(x_t;\hat{\theta})}$ around $\theta_*$ is given by
	\begin{equation}
		\label{expan_log_g}
		\begin{split}
			\log{g(x_t;\hat{\theta})} &\doteqdot \log{g(x_t;\theta_*)} + \sum_{i=1}^p \frac{\partial\ }{\partial \theta^i} \log{g(x_t;\theta)}\bigg |_{\theta=\theta_*} (\hat{\theta}^i- \theta_*^i) \\
			& \quad +\frac{1}{2} \sum_{1\leq i, j \leq p} \frac{\partial^2\ \  }{\partial \theta_i \partial \theta_j} \log{g(x_t;\theta)}\bigg |_{\theta=\theta_*} (\hat{\theta}^i- \theta_*^i)(\hat{\theta}^j- \theta_*^j)\\
			&= \log{g(x_t;\theta_*)} + \sum_{i=1}^p (\xi_i(x_t)-\eta_i^*)(\hat{\theta}^i- \theta_*^i) \\
			& \quad -\frac{1}{2} \sum_{1\leq i, j \leq p} (\Ddot{\Psi})_{ij}(\hat{\theta}^i- \theta_*^i)(\hat{\theta}^j- \theta_*^j).\\
		\end{split}
	\end{equation}
	Meanwhile, $\hat{\theta}^i = \theta^i(\hat\eta)$ can be expanded around $\eta^*(=\eta(\theta_*))$, as
	\begin{equation}
		\label{exapn_theta}
		\theta^i(\hat{\eta}) \doteqdot \theta^i(\eta^*) + \sum_{j=1}^p \frac{\partial\theta^i}{\partial \eta_j} (\hat{\eta}_j- \eta_j^*) = \theta^i(\eta^*) + \sum_{j=1}^p(\Ddot\Phi)_{ij}(\hat{\eta}_j- \eta_j^*),
	\end{equation}
	where $\Phi(\eta)$ is the conjugate convex function of $\Psi$, which satisfies the following relations:
	\begin{align}
		&\frac{\partial \Phi }{\partial \eta_i} (\eta) = \theta^i, \quad i=1,\ldots,p\\
		&\Ddot\Phi  \triangleq \biggl(\frac{\partial^2 \Phi}{\partial \eta_i \partial \eta_j}\biggr) = {\Ddot\Psi}^{-1}. \label{Phi_Psi}
	\end{align}
	Inserting \eqref{exapn_theta} and \eqref{Phi_Psi} into \eqref{expan_log_g} gives
	\begin{equation}
		\label{log_g_expan_m}
		\begin{split}
			\log{g(x_t;\hat{\theta})} &\doteqdot \log{g(x_t;\theta_*)} + \sum_{1\leq i, j \leq p}\bigl(\Ddot\Psi^{-1}\bigr)_{ij} (\xi_i(x_t)-\eta_i^*)(\hat{\eta}_j- \eta^*_j)\\
			& \quad -\frac{1}{2} \sum_{1\leq i, j \leq p} (\Ddot{\Psi})_{ij}(\hat{\theta}^i- \theta_*^i)(\hat{\theta}^j- \theta_*^j).
		\end{split}
	\end{equation}
	Taking the expectation for both sides gives
	\begin{equation}
		\label{E_log_g_expan}
		\begin{split}
			E[\log{g(X_t;\hat{\theta})}] &\doteqdot E\log{g(X_t;\theta_*)}] + \sum_{1\leq i, j \leq p}\bigl(\Ddot\Psi^{-1}\bigr)_{ij}E[ (\xi_i(X_t)-\eta_i^*)(\hat{\eta}_j- \eta^*_j)]\\
			& \quad -\frac{1}{2} \sum_{1\leq i, j \leq p} (\Ddot{\Psi})_{ij}E[(\hat{\theta}^i- \theta_*^i)(\hat{\theta}^j- \theta_*^j)]
		\end{split}
	\end{equation}
	Note that 
	\begin{equation}
		\label{E_xi_eta}
		\begin{split}
			E[ (\xi_i(X_t)-\eta_i^*)(\hat{\eta}_j- \eta^*_j)] &= n^{-1}\sum_{s=1}^n E[ (\xi_i(X_t)-\eta_i^*) (\xi_j(X_s)-\eta^*_j) ]\\
			& = n^{-1}E[ (\xi_i(X_t)-\eta_i^*) (\xi_j(X_t)-\eta^*_j)]\\
			& = n^{-1} (G)_{ij}
		\end{split}
	\end{equation}
	as, for $ s \ne t$, 
	\[
	E[ (\xi_i(X_t)-\eta_i^*) (\xi_j(X_s)-\eta^*_j) = E[ \xi_i(X_t)-\eta_i^*]E[\xi_j(X_s)-\eta^*_j]=0.
	\]
	From (80) in Section \ref{Proof_Theo} of the Appendix, 
	\begin{equation}
		\label{E_theta_ij}
		\begin{split}
			&E[(\hat{\theta}^i-\theta_*^i)(\hat{\theta}^j-\theta_*^j)]\\
			&\doteqdot n^{-1}\sum_{1\leq l,m \leq p}\tilde{g}^{il}\tilde{g}^{jm}g_{lm} \\
			&= n^{-1}\bigl(\tilde{G}^{-1} G \tilde{G}^{-1}\bigr)_{ij}.
		\end{split}
	\end{equation}
	Inserting \eqref{E_xi_eta} and \eqref{E_theta_ij} into \eqref{E_log_g_expan} and using the fact that 
	$\tilde{G} = \Ddot\Psi$ gives
	\begin{align*}
		E[\log{g(X_t;\hat{\theta})}] &\doteqdot E[\log{g(X_t;\theta_*)}] + \frac{1}{n}\mathrm{tr}\bigl(\tilde{G}^{-1} G \bigr).
		-\frac{1}{2n}\mathrm{tr}\bigl(\tilde{G}\tilde{G}^{-1} G \tilde{G}^{-1}\bigr)\\
		& = E[\log{g(X;\theta_*)}] + \frac{1}{2n}\mathrm{tr}\bigl({\tilde{G}}^{-1} G \bigr),
	\end{align*}
	and
	\begin{align*}
		E[\widehat{Ce(M)}] - Ce(M) &= -\frac{1}{n} \sum_{t=1}^n E[\log{g(X_t;\hat{\theta})}] + E[\log{g(X;\theta_*)}]\\
		&= -\frac{1}{n} \sum_{t=1}^n \Bigl\{ E[\log{g(X_t;\hat{\theta})}] -  E[\log{g(X;\theta_*)}]\Bigr\}\\
		&= - \frac{1}{2n} \mathrm{tr}\bigl({\tilde{G}}^{-1} G \bigr).
	\end{align*}

\begin{thebibliography}{99}
		
		\bibitem{Arbel&Gretton}
		M. Arbel and A. Gretton.
		Kernel conditional exponential family.
		\textit{Proceedings of the Twenty-First International Conference on Artificial Intelligence and Statistics}, PMLR 84: 1337--1346, 2018.
		
		\bibitem{Amari0}
		S. Amari.
		Differential geometry of curved exponential families--curvature and information loss.
		\textit{Annals of Statistics}, 10(2): 357--385, 1982.
		
		\bibitem{Amari4}
		S. Amari.
		\textit{Information Geometry and Its Applications}.
		Springer, 2016.
		
		
		\bibitem{Amari&Nagaoka}
		S. Amari and H. Nagaoka.
		\textit{Methods of Information Geometry}. 
		Translations of Mathematical Monographs 191. American Mathematical Society, 2000.
		
		\bibitem{Barndorff-Nielsen}
		O. E. Barndorff-Nielsen.
		\textit{Information and Exponential Families in Statistical Theory}.
		Wiley, 2014.
		
		\bibitem{Barron&Sheu}
		A. R. Barron and C. Sheu.
		Approximation of density functions by sequences of exponential families.
		\textit{Annals of Statistics}, 19(3): 1347--1369, 1991.
		
		\bibitem{Brown}
		L. D. Brown.
		\textit{Fundamentals of Statistical Exponentials Families}.
		IMS, 1986.
		
		\bibitem{Canu&Smola}
		S. Canu and A. Smola.
		Kernel methods and the exponential family.
		\textit{Neurocomputing}, 69(7-9): 714--720,  2006. 
		
		\bibitem{Cencov}
		N. N. Cencov.
		\textit{Statistical Decision Rules and Optimal Inference}.
		American Mathematical Society Translations, 1982.
		
		\bibitem{Csiszar1}
		I. Csisz\'{a}r.
		$I$-divergence geometry of probability distributions and minimization problems.
		\textit{Annals of Probability}, 3: 146--158, 1975.
		
		\bibitem{Csiszar2}
		I. Csisz\'{a}r.
		Why least squares and maximum entropy? An axiomatic approach to 
		inference for linear inverse problems.
		\textit{Annals of Statistics}, 19(4): 2032--2066, 1991.
		
		\bibitem{Efron}
		B. Efron.
		Defining the curvature of a statistical problem (with applications to second-order efficiency).
		\textit{Annals of Statistics}, 3(6):  1189--1242, 1975.
		
		\bibitem{Efron&Tibshirani}
		B. Efron and  R. Tibshirani.
		Using specially designed exponential families for density estimation.
		\textit{Annals of Statistics}, 24(6): 2431--2461,  1996.
		
		
		
		
		
		\bibitem{Fukumizu}
		K. Fukumizu.
		Exponential manifold by reproducing kernel Hilbert spaces.
		In \textit{Algebraic and Geometric Methods in Statistics}, ed. by Paolo Gibilisco et al.. Cambridge University Press, 2010.
		
		\bibitem{Hall}
		P. Hall.
		On Kullback--Leibler loss and density estimation.
		\textit{Annals of Statisics}, 15: 1491--1519, 1987.
		
		\bibitem{Konishi&Kitagawa}
		S. Konishi and G. Kitagawa.
		Generalized information criteria in model selection.
		\textit{Biometrika}, 83(4), 875-890, 1996.
		
		
		
		\bibitem{Portnoy}
		S. Portnoy.
		Asymptotic behavior of likelihood methods for exponential families when the number of parameters tends to infinity.
		\textit{Annals of Statistics}, 16(1): 356-366,  1988.
		
		\bibitem{Rao}
		C. R. Rao.
		\textit{Statistics And Truth: Putting Chance To Work}.
		World Scientific, 1997.
		
		
		
		
		\bibitem{Sheena2}
		Y. Sheena.
		Asymptotic expansion of the risk of maximum likelihood estimator with respect to $\alpha$-divergence as a measure of the difficulty of specifying a parametric model.
		\textit{Communications in Statistics -- Theory and Methods}, 47(16): 4059--4087, 2018.
		
		
		\bibitem{Sheena4}
		Y. Sheena.
		The convergence speed of MLE to the information projection of an exponential family --a criteria for the model dimension and the sample size--  with complete proof.
		\textit{arXiv}, XXX, 2021.
		
		
		\bibitem{Sriperumbudur_et_al}
		B. Sriperumbudur,  K. Fukumizu,  A. Gretton,  A. Hyv\H{a}rinen,  R. Kumar.
		Density estimation in infinite dimensional exponential families
		\textit{Journal of Machine Learning Research}, 18: 1--59, 2017.
		
		\bibitem{Stone1}
		C. J. Stone
		Large-sample inference for log-spline models.
		\textit{Annals of Statistics}, 18(29): 717--741, 1990.
		
		
		\bibitem{Sundberg}
		R. Sundberg.
		\textit{Statistical Modeling for Exponential Families}.
		Cambridge University Press, 2019.
		
		
		\bibitem{Takeuchi}
		K. Takeuchi.
		Distribution of information statistics and criteria for adequacy of models.
		\textit{Mathematical Science}, 153: 12--18, 1976 (\textit{in Japanese}).
		
		\bibitem{Vajda}
		I. Vajda.
		\textit{Theory of Statistical Inference and Information}, Kluwer Academic Publishers, 1989.
		
		\bibitem{Vaart}
		A. W. Van Der Vaart.
		\textit{Asymptotic Statistics}, Cambridge University Press, 1998.
		
		\bibitem{Wainwright&Jordan}
		M. J. Wainwright and M. I. Jordan. 
		\textit{Graphical Models, Exponential Families, and Variational Inference}, now Publishers, 2008.
		
		\bibitem{Wong&Severini} 
		W. H. Wong and T. A. Severini.
		On a maximum likelihood estimation in infinite dimensional parameter spaces.
		\textit{Annals of Statistics}, 19(2): 603--632, 1991.
		
	\end{thebibliography}
\end{document}